%% file: localizedwaves.tex
\documentclass[11pt,a4paper]{article}

\usepackage{fullpage}
\usepackage[T1]{fontenc} 
\usepackage{amsmath,amssymb}
\usepackage{amsopn,amsthm}
\usepackage{subcaption}

\usepackage{graphicx}
\usepackage{color}
\usepackage{psfrag}

\usepackage{tikz}
\usepackage{paralist}

\usepackage{mathrsfs} 
\usepackage{bm}
\usepackage{bbm}
\usepackage{cases}

\usepackage{algpseudocode} 
\usepackage{algorithm}
\floatstyle{plain} \newfloat{myalgo}{tbhp}{mya}

\usepackage[a]{esvect}
\usepackage{float}

\usepackage[font=small,labelfont=bf]{caption}

\usepackage{enumitem}

      {\end{algorithm}
    \end{minipage}
  \end{myalgo}}

\usepackage[colorlinks=true]{hyperref}
\hypersetup{urlcolor=blue, citecolor=blue, linkcolor=blue}

\allowdisplaybreaks

\newtheorem{theorem}{Theorem}[section]
\newtheorem{proposition}[theorem]{Proposition}
\newtheorem{lemma}[theorem]{Lemma}
\newtheorem{corollary}[theorem]{Corollary}

\theoremstyle{definition}

\theoremstyle{remark} \newtheorem{remark}[theorem]{Remark}

\numberwithin{equation}{section}
\numberwithin{figure}{section}
\numberwithin{algorithm}{section}
\numberwithin{table}{section}

\input{macros}

\begin{document}

\title{Localizing acoustic and electromagnetic waves\\ in space and time} 
\author{Roland Griesmaier$^*$ 
  and Soumen Senapati\footnote{Institut f\"ur
    Angewandte und Numerische Mathematik, Karlsruher Institut f\"ur
    Technologie, Englerstr.~2, 76131 Karlsruhe, Germany ({\tt
      roland.griesmaier@kit.edu, soumen.senapati@kit.edu}).}
}
\date{\today}

\maketitle

\begin{abstract}
  We study time-dependent acoustic and electromagnetic waves governed
  by the scalar wave equation or Maxwell's equations in a bounded
  three-dimensional domain. 
  We establish the existence of time-dependent boundary excitations that can
  be prescribed on any open subset of the boundary of the domain such
  that the associated waves are strongly localized in space in the
  sense that they possess arbitrarily large norms in a given subdomain
  and on a given time-interval, while remaining arbitrarily small in
  any other given subdomain for all times. 
  Similarly, we also show the existence of boundary data such that the
  associated waves are strongly localized in time in the sense that 
  they possess arbitrarily large norms in a given subdomain and on
  a given time-interval, while remaining arbitrarily small on the
  same subdomain but on any other prescribed time-interval. 
  In case that we have access to the possibly inhomogeneous
  coefficients in the wave equation or in the Maxwell system, we also
  give explicit constructions to obtain boundary data that generate
  these localized waves, and we comment on possible applications. 
\end{abstract}

{\small\noindent
  Mathematics subject classifications (MSC2010):
  35Q93  
  (35Q61 
  35L05) 
  \\\noindent 
  Keywords: Localized waves, focusing of waves, wave equation, Maxwell equations
  \\\noindent
  Short title: Localizing acoustic and electromagnetic waves
}

\section{Introduction.}
\label{sec:Introduction}
Localized or singular solutions have proven to be a fundamental tool 
in the study of uniqueness results and reconstruction algorithms for 
inverse boundary value problems for partial differential equations. 
Already very early uniqueness proofs for the inverse conductivity
problem in~\cite{KohVog1984,KohVog1985} rely on probing the unknown
conductivity distribution using sequences of boundary potentials that 
generate highly focused voltage distributions inside the domain.
One particular construction of such localized potentials for 
elliptic equations, which has been introduced in~\cite{Geb2008}, 
combines a duality argument for certain data-to-solution
operators with a unique continuation principle for solutions of the
underlying partial differential equation to establish the existence of
sequences of boundary currents that give rise to potentials inside the
domain possessing arbitrarily large norm on a prescribed subdomain, while
nearly vanishing on another given subdomain. 
These localized potentials have been successfully applied to establish
novel uniqueness results for inverse boundary value problems from
local boundary data in~\cite{Har2009,HarUlr2017}. 
The combination of such localized potentials with monotonicity 
relations for the associated Neumann-to-Dirichlet operators
has also been used to establish the theoretical foundation
of novel qualitative reconstruction methods for the inverse
conductivity problem in~\cite{HarUlr2013,Tam06,TamRub02}. 

Meanwhile, these localized potentials and the monotonicity based
uniqueness proofs and reconstruction methods for the inverse
conductivity problem have been extended to time-harmonic wave 
equations describing acoustic, electromagnetic and elastic waves at
fixed frequency 
in~\cite{EbePoh2024,HarLinLiu2018,HarPohSal2019b,HarPohSal2019a}.
It has also been noted that the construction of localized solutions 
in the fixed frequency case is connected to the study of quantitative 
Runge estimates as considered in~\cite{Poh2022,RueSal2019}. 
Corresponding localized waves and monotonicity based
reconstruction methods for time-harmonic inverse scattering
problems on unbounded domains have been analyzed
in~\cite{AlbGri2023,GriHar2018,HarXia2026}.

The aim of this work is to extend the concept of localized 
potentials from \cite{Har2009,HarLinLiu2018,HarPohSal2019a} to 
initial boundary value problems for the time-dependent wave 
equation and for time-dependent Maxwell's equations.
Since time offers an additional degree of freedom, we consider the
following two cases:

(i) \emph{Focusing in space:}
We show that there exist time-dependent excitations on some part of
the boundary of the domain that generate waves in the interior of the
domain possessing arbitrarily large norms in a prescribed
subdomain and on a given time-interval, while nearly vanishing in 
another prescribed subdomain for all times. 

(ii) \emph{Focusing in time:}
We establish the existence of time-dependent boundary excitations on 
some part of the boundary of the domain that generate waves in the 
interior of the domain possessing arbitrarily large norms in a prescribed 
subdomain and on a given time-interval, while nearly vanishing on the
same subdomain but on another prescribed time-interval.

Our proofs combine duality arguments for data-to-solution operators 
with unique continuation principles for solutions to the time-dependent 
wave equation and Maxwell's equations. 
In particular, we invoke Tataru's seminal result \cite{Tat95,Tat99} 
on global unique continuation for the wave equation in optimal time, 
which generalized earlier important contributions from~\cite{Hor92} 
and~\cite{Rob91}. 
For an introduction to such unique continuation results and 
related applications, we ask the reader to consult~\cite{KatKurLas2001}
and~\cite{LauLea2023}. 
For some principally diagonalizable systems,  the unique continuation 
result in \cite{Tat95} was extended in \cite{EINT2002,Ell2003}. 
This is what we use in our results for Maxwell's equations. 
However, it requires stronger smoothness assumptions on the electric 
permittivity and the magnetic permeability than those needed for the 
rest of the argument.
Moreover, the finite speed of propagation of solutions to the wave 
equation and to Maxwell's equations leads to natural restrictions on 
the time-intervals for rising and perhaps disappearing of the 
localized waves in terms of the distances of the respective 
subdomains from the part of the boundary, where the boundary values 
are excited. 
As usual, these distances are to be measured with respect to the 
travel time metric. 

In case that we have access to the possibly inhomogeneous parameters
in the wave equation or in the time-dependent Maxwell system, we also 
provide explicit constructions of the boundary excitations that can
be used to generate localized waves.
These constructions can immediately be translated into numerical 
algorithms. 
In this context, we also note that iterated time reversal has been 
applied in~\cite{BKLS2008,DahKirLas2009,KKLO2021} to develop algorithms 
that use observations of the hyperbolic Dirichlet-to-Neumann map for 
the wave equation to focus scalar waves to a delta distribution at 
some fixed times even in unknown media. 
However, then the focusing is done in travel time coordinates. 
An advantage of our results might be the possibility to prescribe
regions away from the focus area where the amplitude of the waves are
kept at arbitrary small levels for all times. 

Besides possible applications of localized solutions for 
time-dependent wave or Maxwell's equations in uniqueness proofs 
for inverse boundary value problems or in monotonicity-based 
qualitative reconstruction methods, which have been our main 
motivation for this work, several other potential applications
have for instance been proposed 
in~\cite{DahKirLas2009,HarLinLiu2018,KKLO2021}.
These include ultrasound induced heating, inductive charging, or 
secure communication. 

The remainder of this article proceeds as follows.
In Section~\ref{sec:Setting} we introduce our notation and the two 
initial boundary value problems (IBVP) for the wave equation and for 
Maxwell's equations that we use as basic models for acoustic and 
electromagnetic wave propagation throughout this work. 
We also briefly discuss the well-posedness of these problems.  
Then, in Section~\ref{sec:WaveEquation}, we establish the existence of
localized solutions for the IBVP for the wave equation and comment
on their construction. 
Localized solutions for the IBVP for Maxwell's equations are 
developed in Section~\ref{sec:MaxwellEquations}. 
In the Appendix we collect some abstract functional analytic results
that are used in Sections~\ref{sec:WaveEquation}
and~\ref{sec:MaxwellEquations}.

\section{The mathematical setting.}
\label{sec:Setting}
We start by introducing some notation concerning the two IBVP that we 
study in the rest of this work. 
Let $T>0$ and let $\Omega\subset\Rd$ be a bounded domain
with~$C^2$-smooth boundary~$\di\Omega$. 
Furthermore, let $\Gamma\subseteq\di\Omega$ be a relatively open
subset that will represent the support of all boundary excitations in the
following. 

The first problem that we discuss is an acoustic wave equation. 
Denoting by $c \in C^1(\overline\Omega)$ and~$q\in L^\infty(\Omega)$ the 
wave speed and an external potential, respectively, we 
consider the equation 
\begin{subequations}
  \label{eq:IBVP_acoustic}
  \begin{equation}
    c^{-2}(x) \di_t^2 u_f - \Delta u_f  + q(x)u_f \,=\, 0 
    \qquad \text{ in } \Omega\times (0,T) \,,
  \end{equation}
  together with the homogeneous initial conditions
  \begin{equation}
    u_f|_{t=0} \,=\, \di_t u_f|_{t=0} \,=\, 0 
    \qquad \text{in } \Omega \,,
  \end{equation}
  and the Dirichlet boundary condition
  \begin{equation}
    u_f|_{\di\Omega\times(0,T)}
    \,=\, f \text{ on } \Gamma\times (0,T)
    \quad\text{and}\quad
    u_f|_{\di\Omega\times(0,T)}
    \,=\, 0 \text{ on } (\di\Omega\setminus\Gamma)\times (0,T)
  \end{equation}
\end{subequations}
for some $f\in L^2(\Gamma\times(0,T))$.

Throughout we assume that the wave speed is bounded away from zero;
that is $c \geq c_-$ in $\Omega$ for some positive
constant~$c_->0$. 
Following~\cite[p.~72]{KatKurLas2001}, we call
$u_f\in L^2(\Omega\times(0,T))$ a weak solution of the IBVP
\eqref{eq:IBVP_acoustic} if, for any $\psi\in H^2(\Omega\times(0,T))$
with $\psi|_{\Gamma\times(0,T)}=0$ and $\psi|_{t=T}=\di_t \psi|_{t=T}=0$,
\begin{equation*}
  \int_0^T \int_\Omega u_f
  \bigl(  c^{-2}(x)\di_t^2 \psi - \Delta \psi + q(x)\psi\bigr) \dx \dt
  \,=\, \int_0^T \int_\Gamma f \di_\bfnu \psi \ds_x \dt \,. 
\end{equation*}
As usual $\bfnu$ denotes the unit outward normal on $\di\Omega$ and 
$\di_\bfnu$ the associated normal derivative.

\begin{proposition}
  \label{pro:WellPosedness_IBVP_acoustic}
  For any $f\in L^2(\Gamma\times(0,T))$, there exists a
  unique weak solution $u_f$ of~\eqref{eq:IBVP_acoustic}. 
  This solution satisfies~$u_f \in C ([0,T]; L^2(\Omega) )$, and it
  depends continuously on the data.
\end{proposition}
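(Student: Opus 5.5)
We will use transposition (duality) in the sense of Lions--Magenes, combined with the hidden regularity of the normal derivative for the adjoint problem with smooth-in-time right-hand sides.

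\emph{Step 1: the adjoint problem.} For $F\in L^1(0,T;L^2(\Omega))$, let $\psi$ solve the backward problem
\begin{equation*}
  c^{-2}\di_t^2\psi-\Delta\psi+q\psi=F \ \text{ in }\Omega\times(0,T),\qquad \psi|_{t=T}=\di_t\psi|_{t=T}=0,\qquad \psi|_{\di\Omega\times(0,T)}=0 .
\end{equation*}
Standard semigroup or Galerkin theory gives a unique solution $\psi\in C([0,T];H^1_0(\Omega))\cap C^1([0,T];L^2(\Omega))$. Here we use the energy space with weight $c^{-2}$, which is equivalent to the standard one because $c\ge c_-$ and $c\in C^1(\overline\Omega)$. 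The term $q\psi$ is treated as a bounded perturbation via Gronwall.

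The key ingredient is the hidden regularity estimate
\begin{equation*}
  \|\di_\bfnu\psi\|_{L^2(\di\Omega\times(0,T))}\le C\|F\|_{L^1(0,T;L^2(\Omega))}.
\end{equation*}
We prove it by the multiplier method. Extend $\bfnu$ to a $C^1$ vector field $h$ on $\overline\Omega$, which is possible since $\di\Omega$ is $C^2$. Multiply the equation by $h\cdot\nabla\psi$ and integrate by parts. On the boundary, $\psi=0$ gives $\nabla\psi=\di_\bfnu\psi\,\bfnu$, and this produces the term $\tfrac12\int|\di_\bfnu\psi|^2$. The interior terms are bounded by the energy, with $c\in C^1$ needed to handle $\nabla(c^{-2})$. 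This is the main technical obstacle, and we will follow Lions' argument, as in \cite{KatKurLas2001}. By density, it suffices to work with smooth $F$, for which $\psi\in H^2$.

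\emph{Step 2: definition of $u_f$ by transposition.} Consider the map $F\mapsto \di_\bfnu\psi|_{\Gamma\times(0,T)}$. By Step 1 it is bounded from $L^1(0,T;L^2(\Omega))$ into $L^2(\Gamma\times(0,T))$. Its adjoint therefore maps $f\in L^2(\Gamma\times(0,T))$ to an element $u_f\in L^\infty(0,T;L^2(\Omega))\subset L^2(\Omega\times(0,T))$, characterized by
\begin{equation*}
  \int_0^T\!\!\int_\Omega u_f F\,\dx\,\dt=\int_0^T\!\!\int_\Gamma f\,\di_\bfnu\psi \ds_x\dt ,
\end{equation*}
with $\|u_f\|\le C\|f\|$. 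This gives existence and continuous dependence, provided $u_f$ is a weak solution.

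To check this, take a test function $\psi\in H^2(\Omega\times(0,T))$ as in the definition and set $F:=c^{-2}\di_t^2\psi-\Delta\psi+q\psi\in L^2$. Then $\psi$ is the solution from Step 1 with this $F$, by uniqueness in the energy class, and the weak formulation follows. A point to be careful about: the definition only requires $\psi=0$ on $\Gamma$, not on all of $\di\Omega$. We resolve this by reading the definition as including the homogeneous Dirichlet condition on $\di\Omega\setminus\Gamma$, which corresponds to the zero data there, i.e.\ taking test functions vanishing on all of $\di\Omega\times(0,T)$.

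\emph{Step 3: uniqueness.} If $f=0$, then $\int u_f F=0$ for every $F=c^{-2}\di_t^2\psi-\Delta\psi+q\psi$ arising from admissible $\psi$. Solving the backward problem with smooth data, for instance $F\in C_c^\infty(\Omega\times(0,T))$, gives $\psi\in H^2$ by elliptic and time regularity, using $\di\Omega\in C^2$ and compatibility. This set of $F$ is dense in $L^2$, so $u_f=0$.

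\emph{Step 4: time continuity.} For smooth compatible $f$, say $f\in C_c^\infty(\Gamma\times(0,T))$ (extended via a lifting), $u_f$ is a classical energy solution and lies in $C([0,T];L^2(\Omega))$. Applying the duality with $F=\delta_{t_0}\otimes g$, which lies in the closure of the $L^1(0,T;L^2)$ bounds, yields
\begin{equation*}
  \sup_t\|u_f(t)\|_{L^2(\Omega)}\le C\|f\|_{L^2(\Gamma\times(0,T))}.
\end{equation*}
Hence $f\mapsto u_f$ extends continuously into $C([0,T];L^2(\Omega))$, and by density of smooth $f$ this extension coincides with the transposition solution.
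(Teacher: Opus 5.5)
Your proposal is correct and follows essentially the same route as the paper, which simply cites Theorem~2.30 and Lemma~2.42 of \cite{KatKurLas2001} (see also \cite{LasLioTri1986}). Those are the hidden regularity $\di_\bfnu\psi\in L^2$ for the backward problem with homogeneous Dirichlet data and the transposition solution in $C([0,T];L^2(\Omega))$, which are exactly the two ingredients you unpack. Your reading of the test-function class as vanishing on all of $\di\Omega\times(0,T)$ is the right one: with test functions vanishing only on $\Gamma$, the formulation would also force $\di_\bfnu u_f=0$ on $(\di\Omega\setminus\Gamma)\times(0,T)$. In Step~4 the sup-bound is just the $L^\infty(0,T;L^2(\Omega))$ bound from Step~2 applied to continuous $u_f$, so no Dirac mass in time is needed.
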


\begin{proof}
  Under the aforementioned hypotheses this result can be inferred
  from Theorem~2.30 and Lemma~2.42 in~\cite{KatKurLas2001} (see
  also~\cite{LasLioTri1986}). 
\end{proof}

The second problem that we consider is a time-dependent Maxwell
system.
Throughout, we denote $\bfL^2(\Omega) := L^2(\Omega;\R^3)$ and 
\begin{equation*}
  \bfH(\curl;\Omega)
  \,:=\, \left\{\bfu\in \bfL^2(\Omega)
    \;|\; \curl\bfu \in \bfL^2(\Omega) \right\} \,. 
\end{equation*}
with the norms defined accordingly.
We also recall the tangential trace operators
\begin{align*}
  \gamma_\tau &:\, \bfH(\curl;\Omega)
                \to \HminhalfdivOmega \,, \quad
                \bfu \mapsto \bfnu\times\bfu |_{\di\Omega} \,, \\
  \pi_\tau &:\, \bfH(\curl;\Omega)
             \to \HminhalfcurlOmega \,, \quad
             \bfu \mapsto \bfnu \times (\bfu\times\bfnu) |_{\di\Omega} \,.
\end{align*}
For a detailed discussion on this, and in particular on the trace
spaces $\HminhalfdivOmega$ and $\HminhalfcurlOmega$, we refer
to~\cite[Sec.~5.1]{KirHet2015}. 

Denoting by~$\eps,\mu\in L^\infty(\Omega)$ the
electric permittivity and the magnetic permeability,
respectively, we consider the system of equations
\begin{subequations}
  \label{eq:IBVP_Maxwell}
  \begin{align}
    \eps(x)\, \di_t \bfE_\bff - \curl \bfH_\bff
    \,=\, 0 \,, \quad 
    \mu(x)\, \di_t \bfH_\bff + \curl \bfE_\bff
    \,=\, 0 \qquad \text{in } \Omega \times(0,T) \,, 
  \end{align}
  together with the homogeneous initial conditions
  \begin{equation}
    \bfE_\bff|_{t=0}
    \,=\, \bfH_\bff|_{t=0}
    \,=\, 0 
    \qquad \text{in } \Omega \,,
  \end{equation}
  and the boundary condition
  \begin{equation}
    \bfnu\times\bfE_\bff|_{\di\Omega\times(0,T)}
    \,=\, \bff \text{ on } \Gamma\times (0,T)
    \quad\text{and}\quad
    \bfnu\times\bfE_\bff|_{\di\Omega\times(0,T)}
    \,=\, 0 \text{ on } (\di\Omega\setminus\Gamma)\times (0,T)
  \end{equation} 
\end{subequations}
for some $\bff\in \Hcal^1_0([0,T];\HminhalfdivGamma)$.
Here~$\HminhalfdivGamma$ we denotes the closure of
$C^2_0(\Gamma)$ with respect to $\|\cdot\|_{\HminhalfdivOmega}$. 
Furthermore, for any Hilbert space $X$ we use the notation
\begin{equation*}
  \Hcal^1_0([0,T];X)
  \,:=\, \bigl\{ \bff\in H^1((0,T);X) \;\big|\; \bff(0)=0 \bigr\} \,.
\end{equation*}
In view of the vanishing initial condition, we consider the following
inner product on $\Hcal^1_0([0,T];X)$, 
\begin{equation*}
  \langle \bff, \bfg \rangle_{\Hcal^1_0([0,T];X)}
  \,:=\, \int_{0}^T \bigl\langle \di_t \bff(t) ,
  \di_t\bfg(t) \bigr\rangle_{X} \dt \qquad
  \text{for all } \bff, \bfg \in \Hcal^1_0([0,T];X) \,,
\end{equation*}
where $\langle \cdot , \cdot \rangle_{X}$ denotes the inner product of~$X$.
For the rest of this article, we suppress explicit reference to
the underlying function space in the notation
$\langle \cdot , \cdot \rangle$; it will be understood from the context. 

Throughout we assume that the electric permittivity and the 
magnetic permeability are bounded away from zero;  that is
$\eps\geq\eps_-$ and $\mu\geq\mu_-$ a.e.\ on $\Omega$ for some positive
constants~$\eps_-,\mu_->0$. 
Following~\cite{Ant2025}, we call $(\bfE_\bff,\bfH_\bff)$
with\footnote{For a given Banach space $X$, we denote its topological
  dual by $X^*$, and accordingly $\langle\cdot,\cdot\rangle_{X}$
  also denotes the duality pairing between $X^*$ and $X$.}
\begin{align*}
  \bfE_\bff &\in \Hcal^1_0([0,T];\bfH_0(\curl;\Omega)^*)
              \cap L^\infty((0,T);\bfL^2(\Omega)) \,, \\
  \bfH_\bff &\in \Hcal^1_0([0,T];\bfH(\curl;\Omega)^*)
              \cap L^\infty([0,T];\bfL^2(\Omega)) \,,
\end{align*}
a weak solution to the IBVP \eqref{eq:IBVP_Maxwell} if,
for all $t\in (0,T)$, it satisfies
\begin{subequations}
  \label{eq:WeakFormulation_IBVP_Maxwell}
  \begin{align}
    \bigl\langle \eps(x) \di_t\bfE_\bff(t), \bfPhi \bigr\rangle
    - \int_\Omega \bfH_\bff(t) \cdot\curl \bfPhi \dx
    &\,=\, 0 \,,
    &&\text{for all } \bfPhi \in  \bfH_0(\curl;\Omega) \,, \\
    \bigl\langle \mu(x) \di_t\bfH_\bff(t) , \bfPsi \bigr\rangle
    + \int_\Omega \bfE_\bff(t) \cdot \curl \bfPsi \dx
    &\,=\, - \left\langle \bff(t), \pi_\tau[\bfPsi]\right\rangle
    &&\text{for all } \bfPsi \in \bfH(\curl;\Omega) \,.
  \end{align}
\end{subequations}
Let us underline that $\langle\cdot,\cdot\rangle$ on the right hand
side of the second equation in \eqref{eq:WeakFormulation_IBVP_Maxwell}
denotes the dual pairing between $\HminhalfdivOmega$ and
$\HminhalfcurlOmega$. 
However, the same notation appearing on the left hand side of the
first and second equation of \eqref{eq:WeakFormulation_IBVP_Maxwell}
denotes the dual pairing
between~$\bfH_0(\curl;\Omega)^*$ and $\bfH_0(\curl;\Omega)$ as well
as~$\bfH(\curl;\Omega)^*$ and $\bfH(\curl;\Omega)$, respectively.  

\begin{proposition}
  \label{pro:WellPosedness_IBVP_Maxwell}
  For any $\bff\in \Hcal^1_0([0,T]; \HminhalfdivGamma)$, there exists a
  unique weak solution~$(\bfE_\bff,\bfH_\bff)$
  of~\eqref{eq:IBVP_Maxwell}. 
  This solution satisfies
  $(\bfE_\bff,\bfH_\bff)\in C([0,T]; \bfL^2(\Omega)^2)$, and it depends
  continuously on the data.
  Furthermore, we have
  $\div(\eps(x)\bfE_\bff(t))=\div(\mu(x)\bfH_\bff(t))=0$ for all~$t\in [0,T]$.
\end{proposition}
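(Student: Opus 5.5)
The plan is a lifting argument that reduces the problem to a Maxwell system with homogeneous boundary condition and a source term; that problem is then handled by semigroup theory. After that, continuity in time, uniqueness and the divergence conditions are checked separately.

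\emph{Step 1: lifting.} Fix $\bff\in\Hcal^1_0([0,T];\HminhalfdivGamma)$. The trace $\gamma_\tau$ has a bounded right inverse $\mathcal R:\HminhalfdivOmega\to\bfH(\curl;\Omega)$; see \cite[Sec.~5.1]{KirHet2015}. Put $\bfE^0(t):=\mathcal R\bff(t)$. Then $\bfE^0\in\Hcal^1_0([0,T];\bfH(\curl;\Omega))$ and $\bfnu\times\bfE^0=\bff$ on $\di\Omega$. Note that $\bff(t)$ lies in the closure of $C^2_0(\Gamma)$, so it vanishes on $\di\Omega\setminus\Gamma$ in the appropriate sense. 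We look for the solution in the form $\bfE_\bff=\bfE^0+\bfe$ and $\bfH_\bff=\bfh$, where $(\bfe,\bfh)$ solves
\begin{equation*}
  \eps\,\di_t\bfe-\curl\bfh=-\eps\,\di_t\bfE^0\,,\qquad
  \mu\,\di_t\bfh+\curl\bfe=-\curl\bfE^0\,,
\end{equation*}
with $\bfnu\times\bfe=0$ on $\di\Omega$ and zero initial data. The right-hand side $F:=(-\di_t\bfE^0,-\mu^{-1}\curl\bfE^0)$ belongs only to $L^2((0,T);\bfL^2(\Omega)^2)$, because only $\di_t\bfE^0\in L^2$ is available.

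\emph{Step 2: semigroup solution.} On $X=\bfL^2(\Omega)^2$, equipped with the weighted inner product $(\eps\cdot,\cdot)+(\mu\cdot,\cdot)$, the operator
\begin{equation*}
  A(\bfe,\bfh)=\bigl(\eps^{-1}\curl\bfh,\,-\mu^{-1}\curl\bfe\bigr)\,,\qquad
  D(A)=\bfH_0(\curl;\Omega)\times\bfH(\curl;\Omega)\,,
\end{equation*}
is skew-adjoint; this follows from Green's formula for $\curl$. By Stone's theorem, $A$ generates a unitary group. The mild solution
\begin{equation*}
  (\bfe,\bfh)(t)=\int_0^t e^{(t-s)A}F(s)\,\mathrm ds
\end{equation*}
lies in $C([0,T];X)$ and depends continuously on $F$, hence on $\bff$.

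\emph{Main obstacle.} A mild solution for merely $L^2$ data is not classical, so the weak formulation \eqref{eq:WeakFormulation_IBVP_Maxwell} has to be verified directly. The natural tool is the extrapolation space $X_{-1}$. There $\di_t(\bfe,\bfh)=A(\bfe,\bfh)+F$ holds in $L^2((0,T);X_{-1})$. Pairing against $D(A^*)=D(A)$ turns this identity into \eqref{eq:WeakFormulation_IBVP_Maxwell} with test functions in $\bfH_0(\curl)\times\bfH(\curl)$. This gives $\di_t\bfE_\bff\in L^2((0,T);\bfH_0(\curl;\Omega)^*)$ and $\di_t\bfH_\bff\in L^2((0,T);\bfH(\curl;\Omega)^*)$. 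Substituting $\bfE_\bff=\bfE^0+\bfe$ and integrating $\curl\bfE^0$ by parts produces the boundary term $-\langle\bff,\pi_\tau\bfPsi\rangle$.

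\emph{Step 3: uniqueness.} The difference of two weak solutions satisfies the homogeneous problem in this dual sense. To conclude, one could show it is a mild solution: test with $e^{(t-s)A^*}$ applied to smooth vectors, which identifies weak solutions with mild solutions, and the mild solution of the homogeneous problem is zero. Alternatively, a time-integrated energy identity gives the same conclusion.

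\emph{Step 4: divergence conditions.} Take $\bfPhi=\nabla\phi$ with $\phi\in H^1_0(\Omega)$, which lies in $\bfH_0(\curl;\Omega)$ since $\curl\nabla\phi=0$. Then $\langle\di_t(\eps\bfE_\bff),\nabla\phi\rangle=0$, and integrating in time from $0$ gives $\div(\eps\bfE_\bff(t))=0$. Similarly, take $\bfPsi=\nabla\phi$ with $\phi\in H^1(\Omega)$. Here $\pi_\tau\nabla\phi$ is the surface gradient of $\phi|_{\di\Omega}$, and $\bff(t)$ is surface-divergence free, so $\langle\bff,\nabla_{\di\Omega}\phi\rangle=0$. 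I expect this to be the delicate point: surface-divergence freeness of $\bff$ is not part of the stated hypotheses on the data space. If it fails, I would restrict to $\phi\in H^1_0(\Omega)$, which still yields $\div(\mu\bfH_\bff(t))=0$ in the distributional sense in $\Omega$.
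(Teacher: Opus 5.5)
Your proof is correct and uses the same reduction as the paper. Both lift $\bff$ by a right inverse of $\gamma_\tau$, solve the resulting problem with homogeneous tangential boundary condition and volume sources $-\eps\,\di_t G_\tau[\bff]$ and $-\curl G_\tau[\bff]$, and obtain the divergence constraints by testing with gradients.

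The difference is how the reduced problem is solved. The paper delegates existence, uniqueness, continuity in time and stability to the Galerkin analysis of \cite{Ant2025}. It mentions semigroup theory \cite{KirRie2016} only under stronger time regularity of the source. You instead take the Duhamel (mild) solution of the skew-adjoint generator and identify it with the paper's dual-pairing weak solutions. This weak-equals-mild identification (Ball's theorem) holds for any $F\in L^1((0,T);X)$, and it also gives your uniqueness argument. It therefore covers exactly the case the paper flags, since $\di_t G_\tau[\bff]$ is only $L^2$ in time. You also get $C([0,T];\bfL^2(\Omega)^2)$ and the stability estimate directly from the Duhamel formula. The Galerkin route proceeds by approximation and needs a separate argument for continuity in time; the paper cites \cite[Cor.~1]{Ant2025} for this.

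On the divergence constraints, $\div$ in the statement is the distributional divergence in $\Omega$. This is the condition of $\langle\cdot,\cdot\rangle_\mu$-orthogonality to $\nabla H^1_0(\Omega)$ in Remark~\ref{rem:InvariantDefinitions}. So your version with $\phi\in H^1_0(\Omega)$ is precisely the claim, not a weaker fallback. You should drop the $H^1(\Omega)$ variant: a general $\bff(t)\in\HminhalfdivGamma$ is not surface-divergence free, so that step would be false.
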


\begin{proof}
  Thanks to the surjectivity of the tangential trace
  $\gamma_\tau: \HcurlOmega \to \HminhalfdivOmega$,
  we can consider a lifting operator
  $G_\tau : \Hcal^1_0([0,T]; \HminhalfdivGamma)
  \mapsto \Hcal^1_0([0,T]; \bfH(\curl;\Omega))$ satisfying
  \begin{equation*}
    \gamma_\tau[G_\tau[\bff](t)]
    \,=\, \bff(t) \qquad \text{for all } t \in (0,T) \,.
  \end{equation*}
  In view of this, we define
  $\widetilde{\bfE}_\bff = \bfE_\bff - G_\tau[\bff]$ and
  recast~\eqref{eq:WeakFormulation_IBVP_Maxwell}, for
  all~$t\in (0,T)$, to the following weak formulation
  \begin{subequations}
    \label{eq:transformed_IBVP_Maxwell}
    \begin{align}
      \bigl\langle \eps(x) \di_t\widetilde{\bfE}_\bff(t) ,
      \bfPhi \bigr\rangle
      -  \int_\Omega \bfH_\bff(t) \cdot \curl\bfPhi \dx
      &\,=\, - \int_\Omega \eps(x) \di_tG_\tau[\bff]\cdot\bfPhi \dx \,, \\
      \bigl\langle \mu(x) \di_t\bfH_\bff(t) ,
      \bfPsi \bigr\rangle
      + \int_\Omega \widetilde{\bfE}_\bff(t) \cdot\curl \bfPsi \dx
      &\,=\, - \int_\Omega \curl G_\tau[\bff]\cdot\bfPsi \dx \,,
    \end{align}
  \end{subequations}
  for all~$ \bfPhi \in  \bfH_0(\curl;\Omega)$
  and~$\bfPsi \in \bfH(\curl;\Omega)$. 
  Still imposing homogeneous initial
  conditions~$\widetilde{\bfE}_\bff|_{t=0} =\bfH_\bff|_{t=0}=0$,
  the existence and uniqueness of a
  solution~$(\widetilde{\bfE}_\bff,\bfH_\bff)$ with 
  \begin{align*}
    \widetilde{\bfE}_\bff
    &\in \Hcal^1_0([0,T];\bfH_0(\curl;\Omega)^*)
      \cap L^\infty((0,T);\bfL^2(\Omega)) \,, \\
    \bfH_\bff
    &\in \Hcal^1_0([0,T];\bfH(\curl;\Omega)^*)
      \cap L^\infty((0,T);\bfL^2(\Omega)) \,,
  \end{align*}
  to~\eqref{eq:transformed_IBVP_Maxwell} can be
  established using the Galerkin method, as done 
  in~\cite[Thm.~2]{Ant2025},
  or using semigroup theory (see~\cite[Thm.~5.3]{KirRie2016} for such a
  result under slightly stronger regularity assumptions for the source
  term). 
  This in turn implies that the IBVP~\eqref{eq:IBVP_Maxwell}
  admits a unique weak
  solution~$(\bfE_\bff,\bfH_\bff)\in C([0,T];\bfL^2(\Omega)^2)$ for
  any~$\bff\in  \Hcal^1_0([0,T]; \HminhalfdivGamma)$
  (see \cite[Cor.~1]{Ant2025}).
  The continuous dependence of this solution on the data can be
  inferred from \cite[Thm.~2]{Ant2025}. 
  From the weak formulation
  \eqref{eq:WeakFormulation_IBVP_Maxwell}, it also follows that
  \begin{equation*}
    \div \bigl( \eps(x)\bfE_\bff(t) \bigr)
    \,=\, \div \bigl( \mu(x)\bfH_\bff(t) \bigr)
    \,=\, 0 \qquad
    \text{for all } t\in[0,T] \,.
  \end{equation*}
  For a proof of the latter, we can rely on an argument similar
  to~\cite[Pro.~1]{Ant2025}. 
\end{proof}

\section{Localized solutions for the acoustic wave equation.}
\label{sec:WaveEquation}
We discuss the existence and construction of localized solutions to the
IBVP for the wave equation~\eqref{eq:IBVP_acoustic}. 
To simplify the presentation, we denote for any subset $D\subs\Omega$ or 
$\Sigma \subs \di\Omega$ and for~$0 \leq a < b\leq T$, 
\begin{equation*}
  D_{a,b} \,:=\, D \times (a,b) \,, \;
  \Sigma_{a,b} \,:=\, \Sigma \times (a,b)
  \quad\text{and}\quad
  D_b \,:=\, D \times (0,b) \,, \;
  \Sigma_b \,:\,= \Sigma \times (0,b) \,.
\end{equation*}
Let us also introduce some terminologies regarding the
travel time metric associated to wave equation \eqref{eq:IBVP_acoustic} 
and the Maxwell system \eqref{eq:IBVP_Maxwell}. 
In the latter case the wave speed is given by~${c = 1/\sqrt{\eps\mu}}$. 
We denote $\dist(x,y)$ as the Riemannian distance
\begin{equation*}
  \dist(x,y)
  \,:=\, \inf_{\gamma} \int_\alpha^\beta
  \frac{|\gamma'(t)|}{c(\gamma(t))} \dt \,, 
\end{equation*}
between two points $x,y \in \ol\Omega$, where the infimum is
taken over all smooth curves $\gamma$ in $\Omega$
satisfying~$\gamma(\alpha)=x$ and $\gamma(\beta)=y$. 

Moreover, let
\begin{equation*}
  \dist(x,\Gamma)
  \,:=\, \inf_{y\in\Gamma}\dist(x,y) \,,
  \quad x\in\Omega \,,
  \qquad \text{and} \qquad
  \dist(\Omega, \Gamma)
  \,:=\, \sup_{x\in\ol\Omega} \dist(x,\Gamma) \,,
\end{equation*}
and $\diam(\Omega) := \sup_{x,y\in\ol\Omega}\dist(x,y)$.

We define 
\begin{equation}
  \label{eq:time-reversal_and_translation}
  \Rcal_\tau[g](\ph,t) \,:=\, g(\ph,\tau-t) 
  \qquad \text{and} \qquad
  \mathcal{T}_\tau[g](\ph,t) \,:=\, g(\ph,t-\tau) \,,  
\end{equation}
which, with respect to the time-level $t=\tau\in\R$, represent the 
time-reversal and time-translation operator, respectively.

Our analysis of localized waves for the acoustic wave
equation relies on a careful investigation of the range spaces of the
adjoints of certain restricted solution operators for
the~IBVP~\eqref{eq:IBVP_acoustic}. 
For any bounded open subset $B\subset\Omega$ and
for $0\leq a< b\leq T$ we introduce
\begin{equation}
  \label{eq:DefLab}
  \L_{B_{a,b}}:\, L^2(\Gamma_T) \to L^2(B_{a,b}) \,,
  \quad  f \mapsto  u_f|_{B_{a,b}} \,, 
\end{equation}
where $u_f$ denotes the weak solution of~\eqref{eq:IBVP_acoustic}.
In the following lemma we identify the adjoint of this operator. 

\begin{lemma}
  \label{lmm:AdjointLab}
  The adjoint of the operator $\L_{B_{a,b}}$ from \eqref{eq:DefLab} is
  given by
  \begin{equation}
    \label{eq:AdjointLab}
    \L^*_{B_{a,b}} :\, L^2(B_{a,b}) \to  L^2(\Gamma_T) \,,
    \quad g \mapsto \di_{\bfnu} v_g|_{\Gamma_T} \,,
  \end{equation}
  where $v_g \in C([0,T];H^1_0(\Omega))\cap C^1([0,T];L^2(\Omega))$ is
  the unique weak solution of the~IBVP 
  \begin{subequations}
    \label{eq:adjointIBVP_acoustic}
    \begin{align}
      c^{-2}(x) \di_t^2 v_g - \Delta v_g + q(x)v_g
      &\,=\, g 
      &&\text{in } {\Omega_T} \,, \\
      v_g|_{t=T} \,=\, \di_t v_g|_{t=T}
      &\,=\, 0 
      &&\text{in } \Omega \,, \\
      v_g|_{(\di\Omega)_T}
      &\,=\, 0
      &&\text{on } (\di\Omega)_T \,. 
    \end{align}
  \end{subequations}
  Since this solution satisfies
  $\di_\bfnu v_g|_{(\di\Omega)_T} \in L^2((\di\Omega)_T)$, the
  operator $\L^*_{B_{a,b}}$ is well-defined. 
\end{lemma}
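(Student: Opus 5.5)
The plan is to verify the duality identity
\begin{equation*}
  \int_0^T\int_\Gamma f\,\di_\bfnu v_g \ds_x\dt
  \,=\, \int_a^b\int_B u_f\, g \dx\dt
\end{equation*}
for all $f\in L^2(\Gamma_T)$ and $g\in L^2(B_{a,b})$. Here $g$ is extended by zero to all of $\Omega_T$, so the source in \eqref{eq:adjointIBVP_acoustic} makes sense. First, well-posedness and the hidden regularity of the backward problem \eqref{eq:adjointIBVP_acoustic} are recorded. The substitution $t\mapsto T-t$, i.e.\ applying $\Rcal_T$, turns it into a forward IBVP with homogeneous Dirichlet data, zero initial data and source $\Rcal_T[g]\in L^2(\Omega_T)$. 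Since $c^{-2}$ does not depend on $t$ and the operator contains only $\di_t^2$, the time reversal leaves the equation invariant. Standard energy estimates then give a unique solution in $C([0,T];H^1_0(\Omega))\cap C^1([0,T];L^2(\Omega))$. The hidden regularity $\di_\bfnu v_g\in L^2((\di\Omega)_T)$, with continuous dependence on $g$, follows from the multiplier method with a $C^1$ extension of $\bfnu$, using $\di\Omega\in C^2$, $c\in C^1$ and $q\in L^\infty$; I would cite Lasiecka--Lions--Triggiani \cite{LasLioTri1986} or \cite{KatKurLas2001}, as for Proposition~\ref{pro:WellPosedness_IBVP_acoustic}. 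This shows that the map $g\mapsto \di_\bfnu v_g|_{\Gamma_T}$ is bounded.

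Second, I would establish the identity for smooth data. Take $g\in C_0^\infty(B_{a,b})$, so that $v_g$ is smooth enough to be an admissible test function. Where $H^2(\Omega_T)$ regularity needs compatibility conditions, I would rely on the smoothness of $g$ and on the fact that $g$ vanishes near $t=T$, so the final data are compatible; alternatively one can approximate by smooth coefficients. The test function $\psi=v_g$ satisfies $\psi|_{\Gamma_T}=0$ and $\psi|_{t=T}=\di_t\psi|_{t=T}=0$. Inserting it into the weak formulation of \eqref{eq:IBVP_acoustic} gives
\begin{equation*}
  \int_{\Omega_T} u_f\, g\dx\dt
  \,=\, \int_{\Omega_T} u_f\bigl(c^{-2}\di_t^2 v_g-\Delta v_g+qv_g\bigr)\dx\dt
  \,=\, \int_{\Gamma_T} f\,\di_\bfnu v_g\ds_x\dt .
\end{equation*}
Because $g$ is supported in $B_{a,b}$, the left-hand side equals $\langle \L_{B_{a,b}}f,g\rangle_{L^2(B_{a,b})}$.

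Third, I would pass to general $g\in L^2(B_{a,b})$ by density. The left-hand side is continuous in $g$ because $u_f\in L^2(\Omega_T)$. The right-hand side is continuous by the hidden regularity estimate from the first step. This proves $\L_{B_{a,b}}^*g=\di_\bfnu v_g|_{\Gamma_T}$.

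The main obstacle is that the weak formulation requires test functions in $H^2(\Omega_T)$, whereas $v_g$ is a priori only in $C([0,T];H^1_0)\cap C^1([0,T];L^2)$. The density argument handles this, but only if $v_g\in H^2(\Omega_T)$ holds for smooth $g$ compactly supported in time under the merely $C^1$ wave speed and $L^\infty$ potential. I would try to get this from elliptic regularity in $x$ combined with time-differentiated energy estimates: $\di_t v_g$ solves the same problem with source $\di_t g$, so $\di_t v_g,\di_t^2 v_g\in C([0,T];L^2)$ and $\di_t v_g\in C([0,T];H^1_0)$. Then $\Delta v_g=c^{-2}\di_t^2 v_g+qv_g-g\in L^2$, which gives $v_g\in L^2(0,T;H^2(\Omega))$ by elliptic regularity on the $C^2$ domain. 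If needed, the coefficients can also be mollified and a limit taken.
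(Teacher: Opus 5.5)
Your argument is correct, but it is organized differently from the paper's.

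\textbf{The paper's route.} The paper approximates \emph{both} data, $f_k\to f$ and $g_k\to g$. It uses the regularity of the strong solutions $u_{f_k}\in C([0,T];H^2(\Omega))\cap C^2([0,T];L^2(\Omega))$ and $v_{g_k}$, and applies Green's formula in $x$ and integration by parts in $t$ directly. It then passes to the limit using the continuous dependence of $u_f$ on $f$ together with the hidden-regularity bound for $\partial_{\bfnu} v_g$.

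\textbf{Your route.} You smooth only $g$ and insert $\psi=v_g$ into the transposition definition of the weak solution $u_f$. The adjoint identity is then essentially read off from that definition. You need neither an approximation of $f$ nor any regularity theory for the forward problem with smooth boundary data.

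What you must supply instead is $v_g\in H^2(\Omega_T)$ for $g\in C^\infty_0(B_{a,b})$, and your sketch does this under the stated hypotheses:
\begin{itemize}
\item Uniqueness identifies $\partial_t v_g$ with the solution for the source $\partial_t g$. The required final condition $\partial_t^2 v_g|_{t=T}=c^2 g|_{t=T}=0$ holds because $g$ vanishes near $t=T$.
\item Then $\Delta v_g=c^{-2}\partial_t^2 v_g+qv_g-g\in C([0,T];L^2(\Omega))$.
\item Elliptic regularity on the $C^2$ domain gives $v_g\in C([0,T];H^2(\Omega))$.
\end{itemize}
This works for $c\in C^1$ and $q\in L^\infty$, so your fallback of mollifying the coefficients is unnecessary.

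\textbf{Comparison.} Your route is more economical about regularity than the paper's. The paper asserts $v_{g_k}\in C^\infty([0,T];C^\infty(\Omega))$, which is more than $c\in C^1$ and $q\in L^\infty$ can deliver, although $H^2$ would also suffice there. The paper's symmetric Green's-identity computation, in turn, uses only the continuous dependence of $u_f$ on $f$. It would therefore carry over unchanged to any equivalent notion of weak solution, not just the transposition definition.
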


\begin{proof}
  Taking time-reversal into consideration, i.e., considering
  $\Rcal_T v_g$ instead of $v_g$ and $\Rcal_T g$ instead of $g$, we
  can apply~\cite[Thm.~2.30]{KatKurLas2001} to see that, for any
  $g\in L^2(B_{a,b})$ and after extending this function by zero to all
  of $\Omega\times(0,T)$, the~IBVP~\eqref{eq:adjointIBVP_acoustic}
  has a unique weak solution 
  ${v_g \in C([0,T];H^1_0(\Omega))\cap C^1([0,T];L^2(\Omega))}$ which
  satisfies the hidden regularity condition
  $\di_\bfnu v_g|_{(\di\Omega)_T} \in L^2((\di\Omega)_T)$.
  
  Now let $f\in L^2(\Gamma_T)$ and $g\in L^2(B_{a,b})$, and denote by
  $u_f$ and $v_g$ the corresponding weak solutions of
  \eqref{eq:IBVP_acoustic} and \eqref{eq:adjointIBVP_acoustic},
  respectively. 
  We denote by $\{f_k\}_{k\in\N}\subs C^\infty_0([0,T];C^2_0(\Gamma))$ and
  by $\{g_k\}_{k\in\N} \subs C^\infty_0([0,T];C^\infty_0(B_{a,b}))$ smooth
  approximations of $f$ and $g$, respectively.
  Then \cite[Thm.~2.45]{KatKurLas2001} shows that the associated
  solutions $u_{f_k}$ and $v_{g_k}$ of \eqref{eq:IBVP_acoustic} and
  \eqref{eq:adjointIBVP_acoustic} satisfy 
  $u_{f_k}\in C([0,T];H^2(\Omega))\cap C^2([0,T];L^2(\Omega))$ and
  $v_{g_k}\in C^\infty([0,T];C^\infty(\Omega))$, respectively.
  Furthermore, it follows from the continuous dependence
  of these solutions on the data (see~\cite[Thm.~2.30]{KatKurLas2001}) 
  that $u_f = \lim_{k\to\infty} u_{f_k}$ in $L^2(B_{a,b})$
  and~$\di_\bfnu v_g = \lim_{k\to\infty}\di_\bfnu v_{g_k}$
  in~$L^2((\di\Omega)_T)$.
  Using integration by parts, the strong formulations
  of~\eqref{eq:IBVP_acoustic} and \eqref{eq:adjointIBVP_acoustic}, and
  a passage to the limit, we observe 
  \begin{equation*}
    \begin{split}
      \bigl\langle \L_{B_{a,b}}[f] , g \bigr\rangle_{L^2(B_{a,b})}
      &\,=\,   \int_a^b \int_{B} u_f g \dx \dt 
      \,=\, \lim_{k\to\infty} \int_a^b \int_{B} u_{f_k} g_k \dx \dt \\
      &\,=\, \lim_{k\to\infty} \int_0^T \int_\Omega
      \bigl( c^{-2} \di_t^2 u_{f_k} - \Delta u_{f_k} + qu_{f_k} \bigr)
      v_{g_k} \dx \dt \\
      &\phantom{\,=\,}
      + \lim_{k\to\infty} \int_0^T \int_{\di\Omega}
      u_{f_k} \di_\bfnu v_{g_k} \ds_x \dt
      - \lim_{k\to\infty} \int_0^T \int_{\di\Omega}
      v_{g_k} \di_\bfnu u_{f_k} \ds_x \dt \\ 
      &\,=\, \lim_{k\to\infty} \bigl\langle {f_k} ,
      \di_{\bfnu} v_{g_k} \bigr\rangle_{L^2(\Gamma_T)} 
      \,=\, \bigl\langle f ,
      \di_{\bfnu}v_g \bigr\rangle_{L^2(\Gamma_T)} \,.
    \end{split}
  \end{equation*}
  This ends the proof. 
\end{proof}

\subsection{Localization in space}
\label{subsec:Localization_Space_Acoustic}
In Theorem~\ref{thm:LocalizedWavefunctions_acoustic} we establish the
existence aspects of solutions to \eqref{eq:IBVP_acoustic} that are
localized in space. 
Their construction will be discussed in
Corollary~\ref{cor:LocalizedWavefunctions_acoustic} below. 

\begin{theorem}
  \label{thm:LocalizedWavefunctions_acoustic}
  Let $D\Subset\Omega$ be open such that
  $\Omega\setminus\ol{D}$ is connected, and let $B\subset\Omega$  with
  $B\not\subset D$. 
  For $0\leq a<b\leq T$ and $\dist(\Omega, \Gamma)<b$, there exists a
  sequence $\{f_k\}_{k\in\N}\subset L^2(\Gamma_T)$ such that
  \begin{equation*}
    \|u_{f_k}\|_{L^2(B_{a,b})} \to \infty
    \qquad \text{and} \qquad
    \|u_{f_k}\|_{L^2(D_{T})} \to 0 
    \qquad \text{as } k\to\infty \,,
  \end{equation*}
  where $u_{f_k}$ denotes the solution to \eqref{eq:IBVP_acoustic} for
  $f=f_k$.
\end{theorem}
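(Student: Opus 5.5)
The plan is to use the localized-potentials framework of \cite{Geb2008}: compare the ranges of the adjoints $\L^*_{B_{a,b}}$ and $\L^*_{D_T}$ identified in Lemma~\ref{lmm:AdjointLab}. We use the abstract functional analytic fact (as in the Appendix) that for bounded operators $A_j: X\to Y_j$ between Hilbert spaces the following holds. If $\mathcal{R}(A_1^*)\not\subseteq\mathcal{R}(A_2^*)$, then there is a sequence $x_k\in X$ with $\|A_1x_k\|\to\infty$ and $\|A_2x_k\|\to 0$. This follows by contraposition. If no such sequence existed, we would have $\|A_1x\|\le C\|A_2x\|$ for all $x$, and this is equivalent to $\mathcal{R}(A_1^*)\subseteq\mathcal{R}(A_2^*)$ by the standard range-inclusion lemma. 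With $A_1=\L_{B_{a,b}}$ and $A_2=\L_{D_T}$, it therefore suffices to show
\begin{equation*}
  \mathcal{R}(\L^*_{B_{a,b}})\not\subseteq\mathcal{R}(\L^*_{D_T}) \,.
\end{equation*}
In fact it suffices to show $\mathcal{R}(\L^*_{B_{a,b}})\cap\mathcal{R}(\L^*_{D_T})=\{0\}$ together with $\L^*_{B_{a,b}}\neq 0$.

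For $\L^*_{B_{a,b}}\neq0$, we argue via its adjoint. Since $B\not\subset D$, we may shrink $B$ to a small open set $B'\subset B\setminus\ol D$; this is where I read $B\not\subset D$ as giving an open piece outside $\ol D$. Then $\L_{B'_{a,b}}\neq 0$ follows from the unique continuation principle and $b>\dist(\Omega,\Gamma)$. Indeed, if $\di_\bfnu v_g=0$ on $\Gamma_T$ for all $g\in L^2(B'_{a,b})$, then the duality identity of Lemma~\ref{lmm:AdjointLab} gives $u_f=0$ on $B'_{a,b}$ for all $f$. Choosing $g$ supported in $B'_{a,b}$, we get $\di_\bfnu v_g=v_g=0$ on $\Gamma_T$. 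Tataru's unique continuation \cite{Tat95,Tat99}, with the source-free region and the time-reversed problem, then forces $v_g=0$ on part of $B'_{a,b}$, which contradicts $g\neq0$ for suitable $g$. The cleaner choice is to fix some $g\ne0$ supported in $B'_{a,b}$ and show $\di_\bfnu v_g|_{\Gamma_T}\neq0$. This uses the equation $c^{-2}\di_t^2v-\Delta v+qv=g$ and the requirement $b>\dist(\Omega,\Gamma)$, which guarantees that the domain of dependence reached from $\Gamma\times(0,T)$ covers $B'\times(a',b)$ for some subinterval.

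The heart of the proof is the trivial intersection. Suppose $h=\di_\bfnu v_{g_1}|_{\Gamma_T}=\di_\bfnu v_{g_2}|_{\Gamma_T}$ with $g_1\in L^2(B'_{a,b})$ and $g_2\in L^2(D_T)$. Set $w=v_{g_1}-v_{g_2}$. Then $w$ solves the homogeneous wave equation in $(\Omega\setminus(\ol D\cup\ol{B'}))\times(0,T)$, vanishes at $t=T$, and has zero Cauchy data $w=\di_\bfnu w=0$ on $\Gamma_T$. By Tataru's global unique continuation in optimal time, $w$ vanishes in the domain of influence of $\Gamma$ inside the connected set $\Omega\setminus\ol D$, away from $B'$. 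Since the final data vanish at $t=T$, we may extend $w$ by zero for $t>T$; the source terms also vanish there, so the time interval length is harmless. Combining this with connectedness of $\Omega\setminus\ol D$ and the condition $\dist(\Omega,\Gamma)<b$, I would conclude that $w=0$ on a neighbourhood of $\di B'\times(a,b)$ from outside and on $\di D$. Then $v_{g_1}$ restricted to $(\Omega\setminus\ol{B'})\times(0,b)$ coincides there with a function solving the source-free equation, and so does $v_{g_2}$ outside $D$. Defining $v:=v_{g_1}$ in $B'$, $v:=v_{g_2}$ in $D$, and $v:=v_{g_1}=v_{g_2}$ elsewhere gives a solution on all of $\Omega_T$ with source $g_1+g_2$ and zero boundary and final data, hence $v=v_{g_1+g_2}$. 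I expect this step to be the main obstacle: it requires precise bookkeeping of the time cones, namely why $b>\dist(\Omega,\Gamma)$ suffices to reach all of $\ol{B'}$ during $(a,b)$ and all of $\di D$. In the end one wants $h\ne0$ to force a contradiction, i.e.\ one wants to show $g_1=0$. Since $v_{g_1}$ vanishes outside $B'$ on the relevant time window, the plan is to choose the witness $g_1$ from the previous paragraph so that this vanishing contradicts $\di_\bfnu v_{g_1}\neq0$ on $\Gamma_T$. This yields $\mathcal{R}(\L^*_{B'_{a,b}})\not\subseteq\mathcal{R}(\L^*_{D_T})$.

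Finally, $\|u_f\|_{L^2(B_{a,b})}\ge\|u_f\|_{L^2(B'_{a,b})}$, so the sequence obtained for $B'$ also works for $B$.
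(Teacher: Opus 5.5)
Your reduction to the non-inclusion $\ran\L^*_{B'_{a,b}}\not\subseteq\ran\L^*_{D_T}$, and the use of $w=v_{g_1}-v_{g_2}$ followed by a gluing argument, match the paper's framework. The argument breaks at the time window, however. Tataru's theorem applied to $w$ (extended by zero beyond $t=T$) only gives $w=0$ on $\{(x,t)\in\widetilde\Omega_T : \dist(x,\Gamma)\le t\}$. This cannot be improved from the boundary data alone, because the data on $\Gamma_T$ carry no information about the backward field at points with $t<\dist(x,\Gamma)$. The hypothesis $\dist(\Omega,\Gamma)<b$ therefore gives $w=0$ on all of $\widetilde\Omega$ only for $t\in(b-\delta,T)$, where $b-\delta>\max\{\dist(\Omega,\Gamma),a\}$. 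It gives no vanishing near $\di B'\times(a,b)$ when $a$ is small, nor near $\di D\times(0,T)$. Consequently your glued function is a solution only on $\Omega\times(b-\delta,T)$, where the final condition at $t=T$ closes the argument. What you actually obtain is $v_{g_1}=0$ in $(\Omega\setminus\ol{B'})\times(b-\delta,T)$, not vanishing outside $B'$ for all times.

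With only this conclusion, the witness you propose gives no contradiction: $\di_\bfnu v_g|_{\Gamma_T}\neq0$ is compatible with $v_g=0$ in $(\Omega\setminus\ol{B'})\times(b-\delta,T)$. For example, let $P:=c^{-2}\di_t^2-\Delta+q$. Take $\phi$ smooth, supported in $K\times[b-\delta,b)$ for a compact $K\subset B'$, with nonzero Cauchy data at $t=b-\delta$. Set $g:=P\phi$ on $B'\times(b-\delta,b)$ and $g:=0$ otherwise. Then $v_g=\phi$ for $t\ge b-\delta$, while for $t<b-\delta$ it is a free backward wave from data in $\ol{B'}$, which in general does reach $\Gamma$.

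What is missing is a witness that radiates during the late window: a source supported in $B'\times(b-\delta,b)$ whose field is not identically zero in $(\Omega\setminus\ol{B'})\times(b-\delta,b)$. This does not follow from $\L^*_{B'_{a,b}}\neq0$. The paper obtains it in Lemma~\ref{lmm:radiating_sources_acoustic} from the density of the range of $g\mapsto w_g(\cdot,\tau)|_{\Mcal^{(\tau)}}$, proved via duality, an odd reflection in time, and unique continuation. It is then moved into $B'\times(b-\delta,b)$ by $\Rcal_T\Tcal_{T-b}$ with $\tau=\delta$. With that witness the non-inclusion is immediate. A trivial intersection is neither needed nor delivered by this argument. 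Your aim to ``show $g_1=0$'' also cannot succeed, since every $P\phi$ with $\phi\in C_c^\infty(B'\times(a,b))$ lies in the kernel of $\L^*_{B'_{a,b}}$.
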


\begin{proof}
  We start with a brief sketch of the proof.
  Note that without loss of generality we can assume that
  $\ol{B}\cap\ol{D}=\emptyset$ and that
  $\Omega\setminus(\ol{B\cup D})$ is connected; otherwise we
  replace $B$ by a sufficiently small open ball
  $\Btilde\Subset B\setminus\ol{D}$.
  Defining the two
  operators~${\L_{B_{a,b}}:\, L^2(\Gamma_T) \to L^2(B_{a,b})}$ and
  $\L_{D_T}:\, L^2(\Gamma_T) \to L^2(D_T)$ as in~\eqref{eq:DefLab},
  after replacing $B_{a,b}$ by~${D_T}$ in this definition for the
  second one, we may reframe
  Theorem~\ref{thm:LocalizedWavefunctions_acoustic} and look for the
  existence of~$\{f_k\}_{k\in\N}\subset L^2(\Gamma_T)$ for which 
  \begin{equation}
    \label{eq:EquivForm_acoustic}
    \|\L_{B_{a,b}}[f_k]\|_{L^2(B_{a,b})} \to \infty
    \qquad \text{and} \qquad
    \|\L_{D_{T}}[f_k]\|_{L^2(D_{T})} \to 0
    \qquad \text{as } k\to\infty \,.
  \end{equation}
  In view of Lemma~\ref{lmm:Construction} in the appendix, we can
  alternatively prove the non-inclusion 
  \begin{equation}
    \label{eq:non-inclusion_acoustic}
    \ran \L^*_{B_{a,b}} \not\subseteq \ran \L^*_{D_{T}} \,,
  \end{equation}
  in order to establish \eqref{eq:EquivForm_acoustic}.
  We rely on a contrapositive argument to prove
  \eqref{eq:non-inclusion_acoustic}.
  For the sake of presentation, we divide the details of our proof
  in two steps.  

  \emph{Step I:} \;
  The adjoints of the operators $\L_{B_{a,b}}$ and $\L_{D_{T}}$
  are given by $\L^*_{B_{a,b}} :\, L^2(B_{a,b}) \to  L^2(\Gamma_T)$
  and $\L_{D_{T}}^*:\, L^2(D_T) \to L^2(\Gamma_T)$ as described in
  Lemma~\ref{eq:AdjointLab}, again after replacing $B_{a,b}$
  by~${D_T}$ in this definition for the second one. 
  Let us now assume that
  \begin{equation*}
    h \in \ran \L^*_{B_{a,b}} \cap \ran \L^*_{D_{T}} \,. 
  \end{equation*}
  Then, $h = \di_{\bfnu}v_{g_1} \vert_{\Gamma_T} =
  \di_{\bfnu}v_{g_2}\vert_{\Gamma_T}$, where $v_{g_i}$, $i=1,2$,
  solves the IBVP \eqref{eq:adjointIBVP_acoustic} for some
  $g_1 \in L^2(B_{a,b})$ and $g_2\in L^2(D_{T})$, respectively.
  Denoting
  $\widetilde\Omega := \Omega\setminus(\ol{B\cup D})$ and
  choosing $\delta>0$ small enough such that 
  $b-\delta>\max\{\dist(\Omega, \Gamma), a\}$, we use a result on
  the unique continuation of Cauchy data for the wave equation to
  prove that 
  \begin{equation}
    \label{eq:same_wave-field_acoustic}
    v_{g_1} \,=\, v_{g_2}
    \qquad \text{in } \widetilde\Omega_{b-\delta,T} \,. 
  \end{equation}
  
  To see \eqref{eq:same_wave-field_acoustic}, we define
  $w : = v_{g_1} - v_{g_2}$ in $\widetilde\Omega_T$.
  We extend~$w$ from $\widetilde\Omega_T$ to
  $\widetilde\Omega_{2T}$ by zero, and with an abuse of notation we 
  also denote this extension by $w$. 
  Since $w|_{t=T} = \di_t w|_{t=T} = 0$ in $\widetilde\Omega$, we
  observe that this extension satisfies
  $w \in C([0,2T];H^1_0(\Omega))\cap C^1([0,2T];L^2(\Omega))$ and
  \begin{align*}
    c^{-2}(x)\di_t^2 w - \Delta w + q(x)w
    &\,=\, 0
    &&\text{in } \widetilde\Omega_{2T} \,, \\
    w \vert_{\Gamma_{2T}} \,=\, \di_\bfnu w\vert_{\Gamma_{2T}}
    &\,=\, 0
    &&\text{on } \Gamma_{2T} \,.
  \end{align*}
  In view of the vanishing of the Cauchy data of $w$ on
  $\Gamma_{2T}$, the global unique continuation principle for the wave
  equation~\cite[Thm~3.16]{KatKurLas2001} implies that 
  \begin{equation}
    \label{eq:UCP_acoustic}
    w \,=\, 0
    \qquad \text{in } \; \bigl\{ (x,t)\in\widetilde\Omega_{2T} 
    \;\big|\; \dist(x,\Gamma) \leq T-|t-T| \bigr\} \,.
  \end{equation}
  In conjunction with \eqref{eq:UCP_acoustic}, the choice of
  $\delta$ gives that  $w = 0$ in
  $\widetilde\Omega_{b-\delta,T}$,\footnote{If
    $\Gamma=\di\Omega$, it suffices to assume,
    $\diam(\Omega) <2b$.}
  proving~\eqref{eq:same_wave-field_acoustic}. 
  Here we used the fact that $\dist(\Omega,\Gamma)+\delta<b$.
  
  We now show that 
  \begin{equation}
    \label{eq:conseq_intersection_acoustic}
    v_{g_1} \,=\, 0
    \quad \text{in } \Omega\setminus\ol{B}\times(b-\delta,T) \,.
  \end{equation}
  Taking \eqref{eq:same_wave-field_acoustic} into consideration, we can
  define 
  \begin{equation*}
    v_{\mathrm{com}} \,:=\, \begin{cases}
      v_{g_2}
      &\text{in } B \times (b-\delta,T) \,, \\
      v_{g_1}
      &\text{in } D \times (b-\delta, T) \,, \\
      v_{g_1} = v_{g_2}
      &\text{in } \widetilde\Omega_{b-\delta,\, T} \,,
    \end{cases} 
  \end{equation*}
  which satisfies the homogeneous problem
  \begin{align*}
    c^{-2}(x) \di_t^2 v_{\mathrm{com}}
    - \Delta v_{\mathrm{com}}+ q(x) v_{\mathrm{com}}
    &\,=\, 0
    &&\text{in } \Omega_{b-\delta, T} \,, \\
    v_{\mathrm{com}}|_{t=T} \,=\, \di_t v_{\mathrm{com}}|_{t=T}
    &\,=\, 0
    &&\text{in }\Omega \,, \\
    v_{\text{com}}|_{(\di\Omega)_{b-\delta},T}
    &\,=\, 0
    &&\text{on } (\di\Omega)_{b-\delta,\,T} \,.
  \end{align*}
  The above problem, being well-posed, only admits the trivial
  solution.
  Therefore, we can conclude that $v_{\text{com}} = 0$ in
  $\Omega_{b-\delta,T}$, which establishes
  \eqref{eq:conseq_intersection_acoustic}. 

  \emph{Step II:} \;
  Next we show that there exists
  $g\in L^2(B_{a,b})$ such that $v_{g}$ solving
  \eqref{eq:adjointIBVP_acoustic} satisfies 
  \begin{equation}
    \label{eq:radiating_soln_acoustic}
    v_{g}\big|_{(\Omega\setminus\ol{B})\times(b-\delta,T)}
    \,\not\equiv\, 0 \,. 
  \end{equation}
  To this end, we employ Lemma~\ref{lmm:radiating_sources_acoustic}
  below for the choice $\tau=\delta$.
  As a consequence, we have $\widetilde{g}\in L^2(B_\delta)$ such that
  the solution $w_{\widetilde{g}}$ of \eqref{eq:forward_IBVP_body_source}
  satisfies $w_{\widetilde{g}}\not\equiv 0$ in 
  $(\Omega\setminus\ol{B})\times(0,\delta)$.
  Applying time-translation and time-reversal as introduced
  in~\eqref{eq:time-reversal_and_translation}, we define
  $g = \Rcal_T\Tcal_{T-b}[\widetilde{g}]$ in~$\Omega_T$ and notice
  that $\supp g \subseteq B_{a,b}$.
  Now let $v_{g}$ denote the associated solution of
  \eqref{eq:adjointIBVP_acoustic}. 
  Then $v_{g}=0$ in~$\Omega_{b,T}$ and
  $v_{g} \not\equiv 0$ in
  ${(\Omega\setminus\ol{B})\times(b-\delta,\,b)}$, 
  satisfying \eqref{eq:radiating_soln_acoustic}.
  It is immediate from the definition 
  that~$\di_\bfnu v_{g}\big\vert_{\Gamma_T}\in\ran\L^*_{B_{a,b}}$.
  However,  $\di_\bfnu
  v_{g}\big\vert_{\Gamma_T}\not\in\ran\L^*_{D_{T}}$.
  This is because, if~$\di_\bfnu
  v_{g}\big\vert_{\Gamma_T}\in\ran\L^*_{D_{T}}$,
  then we will have $v_{g}\equiv 0$ in
  $(\Omega\setminus\ol{B})\times(b-\delta,T)$ from
  \eqref{eq:conseq_intersection_acoustic} contradicting our choice
  of~$g$.
  In conclusion, we have proved~\eqref{eq:non-inclusion_acoustic},
  which ends the proof of
  Theorem~\ref{thm:LocalizedWavefunctions_acoustic}. 
\end{proof}

In Step~II of the proof of
Theorem~\ref{thm:LocalizedWavefunctions_acoustic} we have used the
following auxiliary result. 

\begin{lemma}
  \label{lmm:radiating_sources_acoustic}
  Consider an open subset $B \subset \Omega$ satisfying
  $\Omega\setminus\ol B \neq \emptyset$ and let
  $0 < \tau \leq T$.
  There exists a source~${g \in L^2(B_\tau)}$ for which
  $w_{g}|_{(\Omega\setminus\ol{B})\times(0,\tau)} \not\equiv 0$,
  where $w_{g} \in C([0,T];H^1_0(\Omega))\cap C^1([0,T];L^2(\Omega))$
  is the unique weak solution of the IBVP
  \begin{subequations}
    \label{eq:forward_IBVP_body_source}
    \begin{align}
      c^{-2}(x) \di_{t}^2 w_g - \Delta w_g + q(x) w_{g}
      &\,=\, g
      &&\text{in } \Omega_T \,, \\
      w_g|_{t=0} \,=\, \di_t w_g|_{t=0}
      &\,=\, 0
      &&\text{in } \Omega \,, \\
      w_g|_{(\di\Omega)_T}
      &\,=\, 0
      &&\text{on } (\di\Omega)_T \,.
    \end{align}
  \end{subequations}
  Moreover, such $g\in L^2(B_\tau)$ can be constructed.
\end{lemma}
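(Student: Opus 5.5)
A clean approach is a constructive argument by contradiction that also yields an explicit source. Fix a nonzero function $\varphi\in C^\infty_0(B)$ and a nonzero $\chi\in C^\infty_0((0,\tau))$. We will look for $g$ of the form $g = c^{-2}\di_t^2\Phi - \Delta\Phi + q\Phi$, with $\Phi$ chosen so that $w_g=\Phi$. The contradiction argument shows that such a $g$ must exist. For concreteness, assume first that every $g\in L^2(B_\tau)$ gives $w_g\equiv 0$ in $(\Omega\setminus\ol B)\times(0,\tau)$; we aim to contradict this.

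\emph{Step 1 (localization of $w_g$).} Under this assumption, for every $g\in L^2(B_\tau)$ the solution $w_g$ vanishes outside $\ol B$ on $(0,\tau)$. In particular its Cauchy data vanish on $(\di\Omega)_\tau$, and also on $\di B'$ for any smooth domain $B'\Subset\Omega$ with $B\subset B'$. Inside $B$, $w_g$ solves the inhomogeneous equation with source $g$. Now take $g$ smooth, of the form $g(x,t)=\chi(t)\varphi(x)$.

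\emph{Step 2 (duality / Green's identity).} Pair $w_g$ with an arbitrary smooth solution $z$ of the homogeneous equation $c^{-2}\di_t^2 z-\Delta z+qz=0$ on $\Omega\times(0,\tau)$. For $z$ we take the weak solution with arbitrary nonzero Cauchy data at $t=\tau$ and zero boundary values. By Step~1 the data of $w_g$ at $t=\tau$ are supported in $\ol B$. If $\tau$ is small this need not help directly, so the cleaner route is different. The contradiction hypothesis makes $g\mapsto w_g|_{t=\tau}$ map into functions supported in $\ol B$. Then, since $w_g$ is supported in $\ol B\times[0,\tau]$, integrating the equation against a smooth $z$ gives
\[
\int_0^\tau\!\!\int_B g\,z \dx\dt \,=\, \int_\Omega \bigl(c^{-2}\di_t w_g\, z - c^{-2} w_g\,\di_t z\bigr)\big|_{t=\tau}\dx .
\]
The simplest contradiction is as follows. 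Choose $g=\chi(t)\varphi(x)$ with $\varphi\ge 0$ nonzero. The solution $w_g$ is then a smooth function with compact support in $\ol B\times[0,\tau]$ in space for all $t\in(0,\tau)$. By finite speed of propagation for the time-translated problem, the homogeneous equation holds for $t$ slightly beyond $\supp\chi$. On a small time interval $(t_1,t_2)\subset(0,\tau)$ after $\supp\chi$, the function $w_g$ solves the homogeneous wave equation in all of $\Omega$ and vanishes on the open set $(\Omega\setminus\ol B)\times(t_1,t_2)$.

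\emph{Step 3 (unique continuation, the main obstacle).} Apply Tataru's unique continuation (as in \cite[Thm~3.16]{KatKurLas2001}) from the open set $(\Omega\setminus\ol B)\times(t_1,t_2)$ to conclude $w_g\equiv 0$ near $t_0=(t_1+t_2)/2$ in a double cone of points within travel distance $(t_2-t_1)/2$ of $\Omega\setminus\ol B$. Since $B$ may be large, a single application covers only a neighborhood of $\di B$. This is exactly the obstacle. To handle it, we instead keep $\supp\chi\subset(0,\tau/2)$ and use time-analyticity in the following sense: $w_g$ restricted to $t>\sup\supp\chi$ is $\sum_j e^{\pm i\sqrt{\lambda_j}t}$-type. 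Vanishing of $w_g$ on $(\Omega\setminus\ol B)\times(t_1,\tau)$ then forces, via the eigenfunction expansion of the operator $-c^2(\Delta-q)$ with Dirichlet conditions, each spectral component to satisfy $\sum_{\lambda_j=\lambda}(\cdot)\,e_j=0$ on $\Omega\setminus\ol B$. Elliptic unique continuation for eigenfunctions on the connected open set then gives $w_g(t)\equiv0$ for $t>\sup\supp\chi$. Energy conservation in this source-free regime then shows that the energy of $w_g$ just after the source is zero. Finally, the identity above with $z$ chosen as the backward solution from data $(0,\varphi)$ yields $\int\!\!\int g z\neq 0$, which contradicts the vanishing at $t=\tau$. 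For the explicit construction (``such $g$ can be constructed''), we simply take $g=\chi\varphi$. The argument shows that this $g$ cannot have $w_g$ vanishing outside $\ol B$ for any such $\varphi$, $\chi$ once $w_g$ is nonzero; non-vanishing of $w_g$ follows from $g\neq0$.
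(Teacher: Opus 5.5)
There is a genuine gap, and it sits at the step you flag as the main obstacle. In Step~3 you claim that if a source-free solution vanishes on $(\Omega\setminus\ol B)\times(t_1,\tau)$, then each spectral component $\sum_{\lambda_j=\lambda}(\cdot)\,e_j$ vanishes on $\Omega\setminus\ol B$. Separating the frequencies $e^{\pm i\sqrt{\lambda_j}t}$ this way requires vanishing for all $t\in\R$ (an almost-periodicity argument). On a bounded time window the only available tool is Tataru's theorem. It propagates vanishing only to points within travel time $(\tau-t_1)/2$ of $\Omega\setminus\ol B$, and finite speed of propagation shows this is sharp.

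In fact your conclusion is false. Suppose $B$ contains a set $K$ whose travel-time distance to $\Omega\setminus\ol B$ exceeds $\tau$; the lemma does not exclude this, e.g.\ $B$ a large ball and $\tau$ small. Take $\varphi\in C^\infty_0(K)$. Finite speed of propagation then gives $w_{\chi\varphi}=0$ on $(\Omega\setminus\ol B)\times(0,\tau)$, although $w_{\chi\varphi}\neq0$. This refutes both Step~3 and your proposed construction ``$g=\chi\varphi$ for any nonzero $\varphi,\chi$''. It also shows that no argument built on one fixed source can succeed. The hypothesis has to be exploited for all $g\in L^2(B_\tau)$ simultaneously, i.e.\ the lemma is really a statement about the range of $g\mapsto w_g$. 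The energy-conservation step and the claim $\int\!\!\int gz\neq0$ are also unjustified, but they come after this.

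Your Green's identity in Step~2 is the right tool, but it must be tested against all $g$ with a well-chosen $z$. This is the paper's argument. Set $\T_\tau: g\mapsto w_g(\cdot,\tau)|_{\Mcal^{(\tau)}}$, where $\Mcal^{(\tau)}$ is the set of points of $\Omega\setminus\ol B$ within travel time $\tau$ of $B$; the claim is that $\ran\T_\tau$ is dense in $L^2(\Mcal^{(\tau)})$.

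Suppose $\Phi\perp\ran\T_\tau$. Let $z$ solve the homogeneous equation backward in time with Dirichlet conditions, $z(\tau)=0$ and $\di_t z(\tau)=c^2\Phi$ ($\Phi$ extended by zero). Green's identity gives
\[
\int_0^\tau\!\int_B gz\dx\dt \,=\, -\int_{\Mcal^{(\tau)}}w_g(\cdot,\tau)\,\Phi\dx \,=\, 0
\]
for all $g$, hence $z=0$ on $B_\tau$. Because $z(\tau)=0$, the odd reflection $\widetilde z(x,t):=-z(x,2\tau-t)$ for $t\in(\tau,2\tau)$ extends $z$ to a solution on $(0,2\tau)$ that vanishes on $B\times(0,2\tau)$. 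Tataru's theorem then yields $z=\di_t z=0$ at $t=\tau$ wherever $\dist(x,B)<\tau$, so $\Phi=0$. The reflection is what lets a window of length $\tau$, rather than $2\tau$, suffice, matching finite speed of propagation exactly.

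Density gives some $g$ with $w_g(\cdot,\tau)\not\equiv0$ on $\Mcal^{(\tau)}$, and continuity in time finishes the existence claim. For the construction, take the Tikhonov minimizer $g_\beta=(\T_\tau^*\T_\tau+\beta\I)^{-1}\T_\tau^*[\one_{\Mcal^{(\tau)}}]$. Since $\T_\tau[g_\beta]\to\one_{\Mcal^{(\tau)}}\neq0$ as $\beta\to0+$, $g_\beta$ works for $\beta$ small.
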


\begin{proof}
  Denoting
  $\Mcal^{(\tau)}:=\{x\in\Omega\setminus\ol{B} \;|\;
  \dist(x,\di B)<\tau\}$, we define the operator 
  \begin{equation}
    \label{eq:final_time_operator_acoustic}
    \T_\tau:\, L^2(B_\tau) \to L^2(\Mcal^{(\tau)}) \,,
    \quad g \mapsto w_g|_{\Mcal^{(\tau)}\times\{\tau\}} \,.
  \end{equation}
  Employing the unique continuation principle for wave
  equation~\cite[Thm.~3.16]{KatKurLas2001}, we can see 
  that~$\ran\T_\tau$ is dense in~$L^2(\Mcal^{(\tau)})$. 
  We skip discussing the details here as it follows from 
  adjusting approximate interior controllability for 
  wave equation; see \cite[Thm.~3.26]{LauLea2023}. 
  The amount of time required for such approximate 
  controllability result is $2\tau$ as one aims to 
  control both $w_g|_{t=\tau}$ and~$\partial_tw_g|_{t=\tau}$. 
  However we are interested only in $w_g|_{t=\tau}$ 
  and therefore the time-length $\tau$ is sufficient 
  for density of the range of $\T_\tau$. 
  This can be proved by means of an odd reflection 
  argument which we showcase in 
  Lemma~\ref{lmm:radiating_sources_Maxwell_E} in the 
  context of electrodynamics.
  In conclusion, we can choose $g\in L^2(B_\tau)$ such that
  $w_{g}|_{t=\tau} \not\equiv 0$ in~$\Mcal^{(\tau)}$.
  Note that $w_{g}$ is continuous in time, implying
  $w_{g}|_{(\Omega\setminus\ol{B})\times(0,\tau)} \not\equiv 0$.

  Finally we discuss the construction of such a source $g$. 
  For this, we consider the problem of minimizing the Tikhonov
  functional 
  \begin{equation}
    \label{eq:min_func_acoustic}
    \J_\beta[g]
    \,:=\, \| \T_\tau[g]-\one_{\Mcal^{(\tau)}} \|^2_{L^2(\Mcal^{(\tau)})}
    +  \beta \|g\|_{L^2(B_\tau)}^2 \,,
    \qquad \beta>0 \,,
  \end{equation}
  where $\one_{\Mcal^{(\tau)}}$ denotes the characteristic function 
  on~$\Mcal^{(\tau)}$.
  If~$g_\beta$ denotes the minimizer for \eqref{eq:min_func_acoustic},
  then $g_\beta$ satisfies 
  \begin{equation}
    \label{eq:minimizers_acoustic}
    g_\beta
    \,=\,  \bigl( \T_\tau^*\T_\tau + \beta \I \bigr)^{-1}
    \T_\tau^*[\one_{\Mcal^{(\tau)}}] \,,
  \end{equation}
  and $ \lim_{\beta\to 0+} \T_\tau [g_\beta] = \one_{\Mcal^{(\tau)}}$.
  For a proof of \eqref{eq:minimizers_acoustic}, we refer the reader, 
  e.g., to~\cite[p.~52--55]{Han2017}. 
  This shows that $g_{\beta}$ given by
  \eqref{eq:minimizers_acoustic} is an example for the desired source
  $g$ in Lemma~\ref{lmm:radiating_sources_acoustic} when $\beta>0$
  is sufficiently small. 
\end{proof}

Next we consider the construction of localized waves as in
Theorem~\ref{thm:LocalizedWavefunctions_acoustic}.   

\begin{corollary}
  \label{cor:LocalizedWavefunctions_acoustic}
  A sequence of Dirichlet data $\{f_k\}_{k\in\N}\subset L^2(\Gamma_T)$
  as in Theorem~\ref{thm:LocalizedWavefunctions_acoustic} can be
  explicitly constructed as 
  \begin{equation*}
    f_k
    \,:=\, \frac{1}{\|\,\sqrt[]{\J_k}[\xi]\|^{3/2}} \J_k[\xi]
    \quad \text{with} \quad \J_k
    \,:=\,  \bigl(\L^*_{D_{T}}\L_{D_{T}}
    + k^{-1}\I\bigr)^{-1} \,,
    \qquad k\in\N \,,
  \end{equation*}
  where, taking $\beta>0$ sufficiently small, we choose
  $0< \tau < \min\{b-a, b-\dist(\Omega,\Gamma)\}$ and
  \begin{equation*}
    \xi
    \,:=\, \L^*_{B_{a,b}}
    \Bigl[ \Rcal_T\Tcal_{T-b}
    \bigl[ (\T_{\tau}^*\T_{\tau} + \beta \I)^{-1}
    \T_{\tau}^*[\one_{\Mcal^{(\tau)}}] \bigr] \Bigr] \,.
  \end{equation*}
  Here, the operators $\L_{B_{a,b}}$, $\L_{D_{T}}$, and
  $\T_{\tau}$ are defined in \eqref{eq:DefLab} and
  \eqref{eq:final_time_operator_acoustic}, respectively.  
\end{corollary}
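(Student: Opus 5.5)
The construction rests on the abstract result behind Lemma~\ref{lmm:Construction} in the appendix, applied to the operators $A:=\L_{B_{a,b}}$ and $C:=\L_{D_T}$. By that result, if $\xi\in\ran A^*\setminus\ran C^*$, then the Tikhonov-regularized sequence
\[
f_k = \|\sqrt{\J_k}[\xi]\|^{-3/2}\J_k[\xi], \qquad \J_k=(C^*C+k^{-1}\I)^{-1},
\]
satisfies $\|Cf_k\|\to0$ and $\|Af_k\|\to\infty$. So the proof reduces to two checks: that the given $\xi$ lies in $\ran\L^*_{B_{a,b}}$, and that it does not lie in $\ran\L^*_{D_T}$. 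After these, I will recall briefly why the blow-up and decay hold.

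\textbf{Step 1: $\xi\in\ran\L^*_{B_{a,b}}$.} Set $\widetilde g:=(\T_\tau^*\T_\tau+\beta\I)^{-1}\T_\tau^*[\one_{\Mcal^{(\tau)}}]\in L^2(B_\tau)$. By the proof of Lemma~\ref{lmm:radiating_sources_acoustic}, for $\beta$ small we have $\T_\tau[\widetilde g]$ close to $\one_{\Mcal^{(\tau)}}$, hence nonzero. Therefore $w_{\widetilde g}\not\equiv0$ on $(\Omega\setminus\ol B)\times(0,\tau)$.

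Next put $g:=\Rcal_T\Tcal_{T-b}[\widetilde g]$, so $g(x,t)=\widetilde g(x,b-t)$. Its support lies in $B\times(b-\tau,b)$, and $\tau<b-a$ gives $b-\tau>a$, so $g\in L^2(B_{a,b})$. Time reversal shows $v_g(x,t)=w_{\widetilde g}(x,b-t)$ for $t\le b$ and $v_g=0$ for $t\ge b$. Consequently $v_g\not\equiv0$ on $(\Omega\setminus\ol B)\times(b-\tau,b)$, and $\xi=\L^*_{B_{a,b}}[g]$.

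\textbf{Step 2: $\xi\notin\ran\L^*_{D_T}$.} First reduce, as in the proof of Theorem~\ref{thm:LocalizedWavefunctions_acoustic}, to the case $\ol B\cap\ol D=\emptyset$ with $\Omega\setminus\ol{B\cup D}$ connected, by shrinking $B$. This shrinking must be done before $\T_\tau$ and $\Mcal^{(\tau)}$ are defined, or else it should be noted that the argument only uses a ball inside $B\setminus\ol D$. Now take $\delta:=\tau$. The hypothesis $\tau<b-\dist(\Omega,\Gamma)$, together with $\tau<b-a$, gives $b-\delta>\max\{\dist(\Omega,\Gamma),a\}$.

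Suppose $\xi\in\ran\L^*_{D_T}$. Then Step~I of that proof, that is, unique continuation across $\Gamma$ followed by gluing $v_{\rm com}$, yields $v_g=0$ on $(\Omega\setminus\ol B)\times(b-\delta,T)$. This contradicts Step~1, which is exactly Step~II of the theorem's proof with this concrete $g$.

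\textbf{Step 3: behaviour of $f_k$.} This is the content of the appendix lemma, which I expect to state this sequence verbatim. The argument runs as follows. Spectral calculus gives $\|Cf_k\|^2\le\|\sqrt{\J_k}\xi\|^{-3}\,\|\sqrt{\J_k}\xi\|^2=\|\sqrt{\J_k}\xi\|^{-1}$. On the other side, writing $\xi=A^*z$,
\[
\|Af_k\|\,\|z\|\ge\langle f_k,\xi\rangle=\|\sqrt{\J_k}\xi\|^{1/2}.
\]
The key fact is that $\|\sqrt{\J_k}\xi\|\to\infty$ exactly when $\xi\notin\ran(C^*C)^{1/2}=\ran C^*$.

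The main obstacle is the compatibility bookkeeping of Step 2: $\tau$ must play the role of $\delta$, and the radiating source must live in the shrunken ball. If the appendix lemma requires a different normalization, I would adapt the exponent $3/2$ accordingly.
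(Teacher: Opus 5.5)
Your proposal is correct and follows the paper's proof. Like the paper, you apply Lemma~\ref{lmm:Construction} with $\Acal_1=\L^*_{B_{a,b}}$ and $\Acal_2=\L^*_{D_T}$, obtain $\xi\notin\ran\L^*_{D_T}$ from Steps~I--II of the proof of Theorem~\ref{thm:LocalizedWavefunctions_acoustic} with $\delta=\tau$, and match the normalization through $\langle\xi,\J_k[\xi]\rangle=\|\sqrt{\J_k}[\xi]\|^2$, so $\langle\xi,\eta_k\rangle^{3/4}=\|\sqrt{\J_k}[\xi]\|^{3/2}$ and the exponent needs no change. Your extra checks are sound and only make explicit what the paper leaves implicit: the support of $\Rcal_T\Tcal_{T-b}[\widetilde g]$ in $B_{b-\tau,b}\subset B_{a,b}$, the spectral-calculus derivation of the appendix lemma, and the observation that $\T_\tau$ and $\Mcal^{(\tau)}$ must be built on the shrunken ball inside $B\setminus\ol D$.
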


\begin{proof}
  For the construction of localized potentials, we make use of
  Lemma~\ref{lmm:Construction} with 
  $\Acal_1 = \L^*_{B_{a,b}}$, $\Acal_2 = \L^*_{D_{T}}$,
  and $\xi = \L^*_{B_{a,b}} [ \Rcal_T\Tcal_{T-b}[g] ]$ where,
  for sufficiently small $\beta>0$, we consider 
  \begin{equation*}
    g
    \,:=\, \bigl(\T_{\tau}^*\T_{\tau} + \beta \I\bigr)^{-1}
    \T_{\tau}^*[\one_{\Mcal^{(\tau)}}] \,,
  \end{equation*}
  with $0< \tau < \min\{b-a, b-\dist(\Omega,\Gamma)\}$ and
  $\T_{\tau}$ defined in \eqref{eq:final_time_operator_acoustic}.
  The arguments in the Step~II of the proof of
  Theorem~\ref{thm:LocalizedWavefunctions_acoustic} yield 
  $\xi \notin \ran\L^*_{D_{T}}$.
  If 
  $\J_k :=  ( \L^*_{D_{T}} \L_{D_{T}}+ k^{-1} \I )^{-1}$
  then $\J_k$ is positive definite and therefore $\sqrt{\J_k}$ 
  makes sense for $k\in\N$.
  If we denote $\eta_k := \J_k[\xi]$,
  then~$\langle \xi , \eta_k\rangle
  = \langle \xi , \J_k[\xi]\rangle
  = \| \sqrt{\J_k}[\xi] \|^2$ for $k\in\N$. In view of
  Lemma~\ref{lmm:Construction}, the construction of localized wave 
  functions of Theorem~\ref{thm:LocalizedWavefunctions_acoustic}
  follows. 
\end{proof}

\begin{remark}
  \label{rem:Finite_speed}
  Due to the finite speed of propagation, we note that
  $\dist(\Omega,\Gamma)$ is the minimum time needed for the waves to
  reach every part of $\Omega$.
  In other words, if $b<\dist(\Omega,\Gamma)$, we cannot construct
  localized waves having arbitrarily large norm on certain
  open subsets $B\subset\Omega$.
  This can be justified by a standard domain of dependence argument. 
  See, for instance, \cite[Thm.~2.47]{KatKurLas2001}.
  Also notice that the choice of $a$ did not play any role in
  Theorem~\ref{thm:LocalizedWavefunctions_acoustic}.
  Therefore we can replace~$a$ by $a_k$ in 
  Theorem~\ref{thm:LocalizedWavefunctions_acoustic} where 
  $0 \le a_k <b, \, k \in \N$ with $a_k \to b$ as $k\to\infty$. 
  This implies that the high energy part of localized waves can
  correspond to any small time-interval (around~$b$) of our choice
  whenever the condition $b>\dist(\Omega,\Gamma)$ is
  fulfilled.~\hfill$\lozenge$ 
\end{remark}

\begin{remark}
  \label{rem:smooth_localized_potentials}
  Theorem~\ref{thm:LocalizedWavefunctions_acoustic} is also valid for
  a sequence from any dense subset of $L^2(\Gamma_T)$.
  This immediately implies that the sequence of Dirichlet data in
  Theorem~\ref{thm:LocalizedWavefunctions_acoustic} can be chosen to
  be $C^\infty$-smooth.
  As a consequence, one can also improve the smoothness of localized
  waves.
  We further add that one can prove results similar to
  Theorem~\ref{thm:LocalizedWavefunctions_acoustic} using interior
  sources (in place of boundary sources) which are supported in
  $\omega_T$ where $\omega\subseteq\Omega$ is an arbitrary open set
  and $T>0$ is sufficiently large.~\hfill$\lozenge$
\end{remark}

\begin{remark}
  Our arguments to construct localized solutions mainly use Tataru's 
  sharp global unique continuation result \cite{Tat95} for wave
  equation which holds even for more general hyperbolic operators
  admitting time-independent (or, time-analytic) coefficients such as 
  \begin{equation*}
    c^{-2}(x)\di^2_t
    - \div\left(A(x)\nabla_x \right)
    + B(x)\cdot\nabla_{x} + q(x)
  \end{equation*}
  where $A$ and $c$ denote some $C^1$-smooth positive
  definite matrix and non-negative functions
  respectively.
  Furthermore $B(\cdot)$ and $q(\cdot)$ represent some bounded vector
  field and scalar function, respectively.  
  Therefore, Theorem~\ref{thm:LocalizedWavefunctions_acoustic} can 
  also be established for such general operators.~\hfill$\lozenge$
\end{remark}

\subsection{Localization in time}
\label{subsec:Localization_Time_Acoustic} 
Similarly we can construct boundary sources for which the solutions to
\eqref{eq:IBVP_acoustic} concentrate on any given open set
$B\subset\Omega$ at a sufficiently large time. 

\begin{theorem}
  \label{thm:localizedSolution__same_base_acoustic}
  Consider an open subset $B\Subset\Omega$ such that
  $\Omega\setminus\ol{B}$ is connected, and let $[a,b]$ and~$[c,d]$ be
  two subintervals of~$[0,T]$ with $[a,b]\cap[c,d]=\emptyset$ such
  that
  \begin{itemize}
  \item[(I)] $\dist (\Omega,\Gamma) < |b-d|$ \hspace*{11.7em} if $c<a$\,,
  \item[(II)] $\dist (\Omega,\Gamma) < b$ \; and \;
    $\dist(B,\di B) < |b-c| / 2$ \qquad if $c>a$\,.
  \end{itemize}
  Then, there exists a sequence $\{f_k\}_{k\in\N} \subset L^2(\Gamma_T)$ such that 
  \begin{equation*}
    \|u_{f_k}\|_{L^2(B_{a,b})} \to \infty
    \qquad \text{and} \qquad
    \|u_{f_k}\|_{L^2(B_{c,d})} \to 0
    \qquad \text{as } k\to\infty \,,
  \end{equation*}
  where $u_{f_k}$ denotes the solution to \eqref{eq:IBVP_acoustic} for
  $f=f_k$.
  Moreover, such boundary sources $\{f_k\}_{k\in\N}$ can be
  constructed. 
\end{theorem}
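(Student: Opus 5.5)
The plan mirrors the proof of Theorem~\ref{thm:LocalizedWavefunctions_acoustic}. By Lemma~\ref{lmm:Construction}, it suffices to prove the non-inclusion
\begin{equation*}
  \ran \L^*_{B_{a,b}} \not\subseteq \ran \L^*_{B_{c,d}} \,,
\end{equation*}
with both adjoints given by Lemma~\ref{lmm:AdjointLab}. The explicit construction then follows exactly as in Corollary~\ref{cor:LocalizedWavefunctions_acoustic}, using $\J_k = (\L^*_{B_{c,d}}\L_{B_{c,d}}+k^{-1}\I)^{-1}$ and a suitable $\xi\in\ran\L^*_{B_{a,b}}$.

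We first suppose $h=\di_\bfnu v_{g_1}|_{\Gamma_T}=\di_\bfnu v_{g_2}|_{\Gamma_T}$ with $g_1\in L^2(B_{a,b})$ and $g_2\in L^2(B_{c,d})$. Set $w=v_{g_1}-v_{g_2}$ and extend it by zero beyond $t=T$. Then $w$ solves the homogeneous wave equation in $(\Omega\setminus\ol B)\times(0,2T)$, and its Cauchy data vanish on $\Gamma_{2T}$. Tataru's unique continuation \cite[Thm~3.16]{KatKurLas2001} gives $w=0$ on $\{\dist(x,\Gamma)\le T-|t-T|\}$. It also helps to note that $w$ solves the homogeneous equation on all of $\Omega$ for times outside $[a,b]\cup[c,d]$, and that $v_{g_i}\equiv0$ for $t$ beyond the support of $g_i$.

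Case (I), $c<a$ (so $d<a$): $v_{g_1}$ vanishes for $t>b$ and $v_{g_2}$ vanishes for $t>d$. On the window $(d,T)$ we have $w=v_{g_1}$, which satisfies the source-free equation outside $B$. Unique continuation from $\Gamma$ over an interval of length $|b-d|>\dist(\Omega,\Gamma)$ should then force $v_{g_1}=0$ in $(\Omega\setminus\ol B)\times(b-\delta,b)$ for small $\delta$. This is done by working in the time window $(d,T)$, around its centre, instead of $(0,2T)$. Exactly as in Step~I of Theorem~\ref{thm:LocalizedWavefunctions_acoustic}, together with the vanishing of $v_{g_1}$ after $t=b$, this gives the needed contradiction.

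Case (II), $c>a$ (so $c>b$): now $v_{g_2}$ vanishes for $t>d$ and $v_{g_1}$ for $t>b$. The condition $\dist(\Omega,\Gamma)<b$ together with extension past $T$ yields $w=0$ outside $B$ for $t\in(b-\delta,T)$. On $(b,c)$ both fields solve the source-free equation in all of $\Omega$, and $w=v_{g_1}-v_{g_2}$ vanishes outside $B$. The role of the hypothesis $\dist(B,\di B)<|b-c|/2$ is to apply unique continuation again from $\di B$ inward, over the time interval $(b,c)$ of length exceeding twice the depth of $B$. This forces $w=0$ in all of $\Omega$ at a time in $(b,c)$. Since $v_{g_1}$ already vanishes for $t>b$, this makes $v_{g_2}$ vanish on that slab. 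Running backward from that time, $v_{g_1}$ has vanishing data there, which combined with the outside-$B$ information forces $v_{g_1}=0$ outside $B$ on $(b-\delta,b)$.

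In both cases we then choose $g\in L^2(B_{a,b})$ via Lemma~\ref{lmm:radiating_sources_acoustic} and $g=\Rcal_T\Tcal_{T-b}[\widetilde g]$ with $\tau=\delta$ small, exactly as in Step~II. This gives $v_g\not\equiv0$ in $(\Omega\setminus\ol B)\times(b-\delta,b)$, so $\di_\bfnu v_g|_{\Gamma_T}\notin\ran\L^*_{B_{c,d}}$. The main obstacle I expect is case (II): making the second unique continuation step across $B$ rigorous, that is, identifying correctly the double-cone region determined by $\dist(B,\di B)$ and the gap $c-b$, and checking that the full Cauchy data needed on $\di B\times(b,c)$ are available from $w=0$ outside $B$.
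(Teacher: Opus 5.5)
Your proposal is correct, and it differs from the paper's proof only in Case~I. In Case~II your argument is essentially the paper's: unique continuation from $\Gamma$ outside $B$, then inward from $\Omega\setminus\ol B$ across $(b,c)$, then backward uniqueness. The only difference there is that the paper concludes $h=0$, i.e.\ $\ran\L^*_{B_{a,b}}\cap\ran\L^*_{B_{c,d}}=\{0\}$, while you conclude non-inclusion for your particular $\xi$; either serves Lemma~\ref{lmm:Construction} equally well.

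In Case~I the paper does not use duality at all. It takes $f$ supported in $\Gamma_{d,b}$, so by causality $u_f\equiv 0$ on $\Omega_d\supset B_{c,d}$. It then gets $u_f\not\equiv 0$ on $B_{a,b}$ from Lemma~\ref{lmm:radiating_boundary_sources_acoustic} with $\tau=b-d>\dist(\Omega,\Gamma)$ after a time shift, and sets $f_k=kf$. This gives \emph{exact} vanishing on $B_{c,d}$, and the construction needs only one Tikhonov problem for $\P_\tau$. Your route is also valid: $\xi\in\ran\L^*_{B_{c,d}}$ forces $\xi=0$ on $\Gamma_{d,T}$, and unique continuation from $\Gamma$ then kills $v_g$ outside $B$ on $(b-\delta,b)$ once $b-d-\delta>\dist(\Omega,\Gamma)$. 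It treats both cases uniformly via the interior-source Lemma~\ref{lmm:radiating_sources_acoustic}. The cost is that it only gives $\|u_{f_k}\|_{L^2(B_{c,d})}\to 0$ and requires the regularized inverse $\J_k$.

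There are two details to fix and one worry you can drop.
\begin{itemize}
\item In Case~I the unique-continuation window must be centred at (or after) $b$, e.g.\ $(d,2b-d)$ using the zero extension, which is legitimate since $v_g\equiv 0$ for $t\ge b$. The window $(d,T)$ taken about its own centre only reaches distance $\min\{t-d,T-t\}$ at time $t$. That is insufficient near $t=b$ when $b$ is close to $T$.
\item In Case~II the backward-uniqueness step must be applied to $v_{g_2}$, not $v_{g_1}$. Since $v_{g_2}$ is source-free on $(0,c)$, its vanishing Cauchy data at $t_0=\frac{b+c}{2}$ give $v_{g_2}\equiv 0$ on $\Omega_{t_0}$. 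Only then does $v_{g_1}=w+v_{g_2}$ vanish outside $B$ on $(b-\delta,b)$. Running $v_{g_1}$ backward is useless, because it carries the source $g_1$.
\item The obstacle you anticipate about Cauchy data on $\di B$ does not arise. $v_{g_2}$ vanishes on the open set $(\Omega\setminus\ol B)\times(b,c)$ and solves the homogeneous equation on all of $\Omega\times(b,c)$. Tataru's theorem therefore gives $v_{g_2}=0$ wherever $\dist(x,\di B)<\frac{c-b}{2}-|t-\frac{b+c}{2}|$. By the strict hypothesis $\dist(B,\di B)<\frac{c-b}{2}$, this region covers $B$ on a time neighbourhood of $t_0$, so both $v_{g_2}$ and $\di_t v_{g_2}$ vanish at $t_0$.
\end{itemize}
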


\begin{proof}
  We present the proof of
  Theorem~\ref{thm:localizedSolution__same_base_acoustic} in two separate
  cases. 
  
  \emph{Case I: ($c<a$)} \;
  The intervals $[c,d]$ and $[a,b]$ being disjoint, we also have
  $d<a$.
  Moreover, it is sufficient to consider $f_k$ belonging to
  $L^2(\Gamma_b)$. 
  If we can establish the existence of~$f\in L^2(\Gamma_{d,b})$
  such that the solution $u_{f}$ to \eqref{eq:IBVP_acoustic}
  satisfies $u_{f}\not\equiv 0$ in $B_{a,b}$, then we can consider
  $f_k = kf$ for~$k\in\N$ in $\Gamma_T$. 
  With such consideration, we have $u_{f_k}= k\,u_{f}$ in
  $\Omega_T$ and therefore 
  \begin{equation*}
    \|u_{f_k}\|_{L^2(B_{a,b})} \to \infty
    \quad \text{as } k\to\infty
    \qquad \text{and} \qquad
    \|u_{f_k}\|_{L^2(B_{c,d})} \,=\, 0
    \quad \text{for all } k\in\N \,.
  \end{equation*}

  Now we argue to prove the existence and construction of
  $f\in L^2(\Gamma_{d,T})$ for which $u_{f}\not\equiv 0$ in
  $B_{a,b}$.
  Note that, $u_{f}= 0$ in $\Omega_d$.
  Invoking a time-translation, it therefore reduces to showing
  $u_{f}\not\equiv 0$ in $B_{a-d,b-d}$ for some
  $f\in L^2(\Gamma_{b-d})$.
  At this point, we take $\tau = b-d$ in
  Lemma~\ref{lmm:radiating_boundary_sources_acoustic} below to
  guarantee the existence of such $f\in L^2(\Gamma_{b-d})$.
  Here we use that $\dist (\Omega,\Gamma) < |b-d|$. 
  
  Also, we can consider a minimization argument similar to that of
  Lemma~\ref{lmm:radiating_sources_acoustic} to construct the
  boundary source 
  \begin{equation*}
    f
    \,=\, (\P_\tau^*\P_\tau + \beta \I )^{-1} \P_\tau^*[\one_{B}] \,,
  \end{equation*}
  with $\P_\tau$ defined in \eqref{eq:DefOperatorPtau} below and
  $\beta>0$ sufficiently small such that
  $u_{f}|_{t=b-d} \not\equiv 0$ in $B$ at time~$t=b-d$.
  Now the continuity of $u_f$ with respect to time ensures that
  $u_{f} \not\equiv 0$ in $B_{a-d,b-d}$ completing our argument
  for Case~I. 

  \emph{Case II: ($a<c$)} \;
  We have $b < c$ as well since $[c,d]$ and $[a,b]$
  are disjoint.
  Here we aim to establish the following relation 
  \begin{equation}
    \label{eq:non-inclusion_same_base}
    \ran\L^*_{B_{a,b}} \cap \ran\L^*_{B_{c,d}} \,=\, \{0\} 
  \end{equation}
  with $\L_{B_{a,b}}$ and $\L_{B_{c,d}}$ defined analogous to
  \eqref{eq:DefLab}.
  In view of the relation \eqref{eq:non-inclusion_same_base}, we may
  refer to Lemma~\ref{lmm:Construction} for the existence of
  boundary sources $f_k$ needed in
  Theorem~\ref{thm:localizedSolution__same_base_acoustic}.
  
  To prove \eqref{eq:non-inclusion_same_base}, we rely on a
  contrapositive argument as done in
  Theorem~\ref{thm:LocalizedWavefunctions_acoustic}.
  Suppose that $h\in\ran\L^*_{B_{a,b}}\cap\ran\L^*_{B_{c,d}}$.
  This implies
  $h=\di_\bfnu v_{g_1}|_{\Gamma_T}=\di_\bfnu v_{g_2}|_{\Gamma_T}$
  where $v_{g_1}$ and $v_{g_2}$ solve the adjoint problem
  \eqref{eq:adjointIBVP_acoustic} corresponding to sources
  $g_1\in L^2(B_{a,b})$ and $g_2\in L^2(B_{c,d})$, respectively.
  Since $g_1=g_2=0$ in $B_{d,T}$, we obtain
  from~\eqref{eq:adjointIBVP_acoustic} that $v_{g_1}=v_{g_2}=0$ in 
  $\Omega_{d,T}$.
  In particular, ${h|_{\Gamma_{d,T}}=0}$. 
  We denote $\widetilde\Omega:=\Omega\setminus\ol{B}$ and define 
  $w := v_{g_1}-v_{g_2}$ in~$\Omega_d$. 
  Extending $w$ from $\Omega_d$ to $\Omega_{2d}$ by zero, and denoting
  this extension again by $w$, we find
  that~$w \in C([0,2d];H^1_0(\Omega))\cap C^1([0,2d];L^2(\Omega))$ and
  \begin{align*}
    c^{-2}(x)\di_t^2w - \Delta w + q(x) w
    &\,=\, 0
    &&\text{in } \widetilde\Omega_{2d} \,, \\
    w|_{t=d} \,=\, \di_t w|_{t=d}
    &\,=\, 0
    &&\text{in } \Omega \,, \\
    w|_{\Gamma_{2d}} \,=\, \di_\bfnu w|_{\Gamma_{2d}} 
    &\,=\, 0
    &&\text{on } \Gamma_{2d} \,.
  \end{align*}
  Invoking the unique continuation 
  argument~\cite[Thm.~3.16]{KatKurLas2001}, we see that 
  \begin{equation*}
    w \,=\, 0 \qquad
    \text{in } \;
    \bigl\{ (x,t) \in \widetilde\Omega_{2d}
    \;\big|\; \dist(x,\Gamma) \leq d-|t-d| \bigr\} \,.
  \end{equation*}
  In view of our assumption $\dist (\Omega,\Gamma) < b$, we
  obtain~$w(x,t)=0$,   
  i.e., $v_{g_1}=v_{g_2}$ in $\widetilde\Omega_{b,d}$.
  Since $g_1(\cdot,t)=0$ for almost every $t\in(b,T)$, we also have
  $v_{g_1}=0$ in $\Omega_{b,d}$.
  In particular, $h|_{\Gamma_{b,d}}=0$. 
  Furthermore, this implies $v_{g_2}=0$ in $\widetilde\Omega_{b,c}$. 
  Again we use the unique continuation
  argument~\cite[Thm.~3.16]{KatKurLas2001} to see that 
  \begin{equation*}
    v_{g_2} \,=\, 0 \qquad
    \text{in } \; \Bigl\{ (x,t) \in B_{b,c}
    \;\Big|\; \dist(x,\di B) < \frac{c-b}{2}
    + \Bigl| t - \frac{c+b}{2}\Bigr| \Bigr\} \,,
  \end{equation*}
  which, due to the assumption $\dist(B,\di B) < |b-c| / 2$, yields
  $v_{g_2} = \di_t v_{g_2} = 0$ in the
  slice~${B\times\{\frac{c+b}{2}\}}$.
  Together with what we have seen before, we conclude that
  $v_{g_2} = \di_t v_{g_2}=0$ on $\Omega\times\{ \frac{c+b}{2} \}$.
  Since~$g_2(\cdot,t)=0$ for almost every $t\in(0,\frac{c+b}{2})$,
  we use the uniqueness of solutions to the adjoint problem to
  conclude~$v_{g_2}=0$ in $\Omega_{\frac{c+b}{2}}$ which further gives
  $h = 0$ on $\Gamma_{b}$.
  Combining this with~$h|_{\Gamma_{b,d}}=0$ and
  $h|_{\Gamma_{d,T}}=0$, our proof for
  \eqref{eq:non-inclusion_same_base} is complete.  

  For the construction of $f_k$, we need to first obtain any
  non-zero element from $\ran\L^*_{B_{a,b}}$, say~$\xi$, and then
  appeal to Lemma~\ref{lmm:Construction} with the choice 
  \begin{equation*}
    f_k
    \,:=\, \frac{1}{\|\sqrt{\J_k}[\xi]\|^{3/2}} \J_k[\xi]
    \quad \text{with} \quad
    \J_k \,:=\,  \bigl(\L^*_{B_{c,d}}\L_{B_{c,d}}+ k^{-1}\I\bigr)^{-1} \,,
    \qquad k\in\N \,.
  \end{equation*}
  Now, to find $0\neq \xi \in\ran\L^*_{B_{a,b}}$, we may argue as in
  Step~II of Theorem~\ref{thm:LocalizedWavefunctions_acoustic}, which in
  turn relies on Lemma~\ref{lmm:radiating_sources_acoustic}.
  For brevity, we briefly discuss the steps.
  We first choose $\delta$
  satisfying~$\dist(\Omega,\Gamma) + \delta < b$, then we consider
  \begin{equation*}
    g
    \,:=\, \bigl( \T_{\delta}^*\T_{\delta}
    + \beta\I \bigr)^{-1}
    \T_{\delta}^*[\one_{\Mcal^{(\delta)}}]
  \end{equation*}
  for $\beta>0$ sufficiently small, where $\T_{\delta}$ is
  defined in \eqref{eq:final_time_operator_acoustic} for
  $\tau=\delta$.
  We observe that~$\xi\not\equiv 0$ in~$\Gamma_T$ where  
  \begin{equation}
    \label{eq:radiating_data_same_base}
    \xi
    \,:=\, \L^*_{B_{a,b}} \bigl[ \Rcal_T\Tcal_{T-b}[g] \bigr]
  \end{equation}
  If $\xi \equiv 0$ on $\Gamma_{T}$, then we can again use unique
  continuation (see~\cite[Thm.~3.16]{KatKurLas2001}) to claim
  that~$v_{\Rcal_T\Tcal_{T-b}[g]}$ solving~\eqref{eq:adjointIBVP_acoustic}
  satisfies the vanishing condition 
  \begin{equation*}
    v_{\Rcal_T\Tcal_{T-b}[g]}(x,t) \,=\, 0 \qquad
    \text{in } (\Omega\setminus\ol{B}) \times (b-\delta,b) \,,
  \end{equation*}
  which contradicts our choice of $g$ implying $\xi$ defined in
  \eqref{eq:radiating_data_same_base} does not vanish on all
  of~$\Gamma_T$.  
\end{proof}

In Case~I of the proof of
Theorem~\ref{thm:localizedSolution__same_base_acoustic} we used the
following auxiliary result. 

\begin{lemma}
  \label{lmm:radiating_boundary_sources_acoustic}
  For $\tau>\dist(\Omega,\Gamma)$ and $B\subseteq \Omega$ open, we
  can construct $f \in L^2(\Gamma_{\tau})$ such
  that~${u_{f}|_{t=\tau} \not\equiv 0}$ in $B$, where $u_{f}$ denotes
  the solution to \eqref{eq:IBVP_acoustic}. 
\end{lemma}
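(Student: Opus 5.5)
The plan is to argue by duality, exactly as in Lemma~\ref{lmm:radiating_sources_acoustic}. We introduce the final-time operator
\begin{equation*}
  \P_\tau :\, L^2(\Gamma_\tau) \to L^2(B) \,, \quad f \mapsto u_f|_{B\times\{\tau\}} \,,
\end{equation*}
which is well defined and bounded by Proposition~\ref{pro:WellPosedness_IBVP_acoustic}, since $u_f\in C([0,\tau];L^2(\Omega))$. It suffices to show that $\P_\tau$ is not the zero operator. Indeed, we will show the stronger statement that $\ran\P_\tau$ is dense in $L^2(B)$, equivalently that $\P_\tau^*$ is injective.

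\emph{Step 1: identify $\P_\tau^*$.} For $\phi\in L^2(B)$, extended by zero to $\Omega$, let $v_\phi$ solve the backward problem
\begin{equation*}
  c^{-2}\di_t^2 v_\phi - \Delta v_\phi + q v_\phi = 0 \ \text{in } \Omega_\tau, \qquad v_\phi|_{t=\tau}=0, \quad c^{-2}\di_t v_\phi|_{t=\tau} = -\phi, \qquad v_\phi|_{(\di\Omega)_\tau}=0 .
\end{equation*}
The sign convention for the final velocity is chosen so that the pairing below has the right sign. Using the same smooth approximation and integration by parts as in Lemma~\ref{lmm:AdjointLab}, together with the boundary terms at $t=\tau$, we expect
\begin{equation*}
  \langle \P_\tau f,\phi\rangle_{L^2(B)} \,=\, \langle f,\di_\bfnu v_\phi\rangle_{L^2(\Gamma_\tau)} ,
\end{equation*}
so that $\P_\tau^*\phi=\di_\bfnu v_\phi|_{\Gamma_\tau}$. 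Hidden regularity of $v_\phi$ is again provided by \cite[Thm.~2.30]{KatKurLas2001}.

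\emph{Step 2: injectivity via odd reflection and unique continuation.} Suppose $\di_\bfnu v_\phi=0$ on $\Gamma_\tau$. Because the final displacement vanishes and the coefficients are time-independent, the odd reflection $\tilde v(x,t):=-v_\phi(x,2\tau-t)$ for $t\in(\tau,2\tau)$ glues with $v_\phi$ to a solution on $\Omega_{2\tau}$. The reflection preserves the condition $v=0$ at $t=\tau$ and is $C^1$ across $t=\tau$ in $L^2$, so the glued function $v$ solves the homogeneous wave equation on $\Omega\times(0,2\tau)$ in the weak sense. Its Cauchy data vanish on $\Gamma_{2\tau}$. Tataru's unique continuation \cite[Thm.~3.16]{KatKurLas2001} then gives $v=0$ on the set
\begin{equation*}
  \{(x,t)\,:\,\dist(x,\Gamma)<\tau-|t-\tau|\} .
\end{equation*}
Since $\tau>\dist(\Omega,\Gamma)$, this set contains a full neighbourhood $\Omega\times(\tau-\eta,\tau+\eta)$ of the slice $t=\tau$. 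Hence $\di_t v_\phi(\cdot,\tau)=0$, so $\phi=0$. Therefore $\ran\P_\tau$ is dense, and in particular $\P_\tau\neq0$ whenever $B\neq\emptyset$.

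\emph{Step 3: construction.} As in Lemma~\ref{lmm:radiating_sources_acoustic}, take
\begin{equation*}
  f_\beta = (\P_\tau^*\P_\tau+\beta\I)^{-1}\P_\tau^*[\one_B] .
\end{equation*}
By density of the range, Tikhonov theory \cite{Han2017} gives $\P_\tau f_\beta\to\one_B$ as $\beta\to0+$. Hence $u_{f_\beta}|_{t=\tau}\not\equiv0$ in $B$ for $\beta>0$ small.

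The main obstacle is Step~2. One must check rigorously that the reflected function is a weak solution across $t=\tau$ with the regularity required by \cite[Thm.~3.16]{KatKurLas2001}, namely $C([0,2\tau];H^1_0)\cap C^1([0,2\tau];L^2)$. One must also check that the unique continuation region genuinely covers all of $\Omega$ near $t=\tau$, and this is where the strict inequality $\tau>\dist(\Omega,\Gamma)$ is essential. If needed, we would first regularize by taking $\phi$ smooth and $\tau$ slightly reduced, and then pass to the limit.
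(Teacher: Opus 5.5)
Your proposal is correct and is essentially the argument the paper sketches: density of $\ran\P_\tau$ by duality, an odd reflection of the adjoint solution about $t=\tau$ (the scalar analogue of the reflection in Lemma~\ref{lmm:radiating_sources_Maxwell_E}) so that Tataru's unique continuation on $\Gamma_{2\tau}$ only requires $\tau>\dist(\Omega,\Gamma)$, followed by Tikhonov regularization with target $\one_B$. Using $L^2(B)$ instead of $L^2(\Omega)$ as the codomain of $\P_\tau$ makes no difference, and neither does the sign in $\P_\tau^*\phi=\pm\di_\bfnu v_\phi$, which depends on the normal and weak-formulation conventions and does not affect injectivity.
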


We omit the proof of
Lemma~\ref{lmm:radiating_boundary_sources_acoustic}, as the arguments 
are similar to that of Lemma~\ref{lmm:radiating_sources_acoustic}.
It relies on minimizing a Tikhonov functional along with a density
result which implies $\ol{\ran\P_{\tau}} = L^2(\Omega)$, with
$\P_{\tau}$ denoting the mapping 
\begin{equation}
  \label{eq:DefOperatorPtau}
  \P_{\tau}:\, L^2(\Gamma_{\tau}) \to L^2(\Omega) \,,
  \quad f \mapsto u_f|_{t=\tau} \,,
\end{equation}
and $u_f$ solving the IBVP \eqref{eq:IBVP_acoustic}. 
Once again, we underline to the point that the density of the range 
of $\P_{\tau}$ requires adjusting the standard boundary approximate 
controllability result for the wave equation given by, for instance, 
\cite[Thm.~4.28]{KatKurLas2001}. 
In this regard, we can use an odd reflection argument as discussed in 
Lemma~\ref{lmm:radiating_sources_Maxwell_E} below.

\section{Localized solutions for the Maxwell equations.}
\label{sec:MaxwellEquations}
Next we discuss the existence and construction of localized solutions 
of the IBVP for Maxwell's equations~\eqref{eq:IBVP_Maxwell}. 
As in Section~\ref{sec:WaveEquation}, our proofs rely on an analysis 
of the range spaces of the adjoints of certain restricted solution
operators. 
We start by introducing these operators and identifying their adjoints. 

For any bounded open subset $B\subset\Omega$ and $0\leq a< b\leq T$,
we define 
\begin{subequations}
  \label{eq:DefLabEabHab_Maxwell}
  \begin{align}
    \L_{B_{a,b}}&:\, \Hcal^1_0([0,T]; \HminhalfdivGamma)
                  \to \bfL^2(B_{a,b})^2 \,, 
    &&\bff \mapsto
       \bigl( \bfE_\bff\big|_{B_{a,b}},\bfH_\bff\big|_{B_{a,b}} \bigr) \,,  \\
    \E_{B_{a,b}}&:\, \Hcal^1_0([0,T]; \HminhalfdivGamma)
                  \to \bfL^2(B_{a,b}) \,, 
    &&\bff \mapsto \bfE_\bff\big|_{B_{a,b}} \,, \\
    \H_{B_{a,b}}&:\, \Hcal^1_0([0,T]; \HminhalfdivGamma)
                  \to \bfL^2(B_{a,b}) \,, 
    &&\bff \mapsto\bfH_\bff\big|_{B_{a,b}} \,,
  \end{align}
\end{subequations}
where $(\bfE_\bff,\bfH_\bff)$ denotes the weak solution
of~\eqref{eq:IBVP_Maxwell}.
In the following lemma we identify the adjoints of the three operators
from~\eqref{eq:DefLabEabHab_Maxwell}.
Here, we consider an inner product in $\bfL^2(B_{a,b})^2$, 
which is equivalent to the standard inner product.
For
$\bfJ=(\bfj_1,\bfj_2), \bfK=(\bfk_1,\bfk_2)
\in \bfL^2(B_{a,b})^2$, we define 
\begin{equation*}
  \bigl\langle \bfJ , \bfK \bigr\rangle_{\eps,\mu}
  \,:=\, \int_{B_{a,b}} \eps(x) \bfj_1(x,t)
  \cdot \bfk_1(x,t) \dx\dt
  + \int_{B_{a,b}} \mu(x) \bfj_2(x,t)
  \cdot \bfk_2(x,t) \dx\dt
\end{equation*}
We further introduce two equivalent inner products for
$\bfL^2(B_{a,b})$ given by 
\begin{equation}
  \label{eq:DefEpsMuScalarProduct}
  \langle \bfj , \bfk \rangle_\eps
  \,:=\, \int_{B_{a,b}} \eps(x) \bfj(x,t) \cdot \bfk(x,t) \dx\dt \,, \quad
  \langle \bfj , \bfk \rangle_\mu
  \,:=\, \int_{B_{a,b}} \mu(x) \bfj(x,t) \cdot \bfk(x,t) \dx\dt
\end{equation}
for $\bfj,\bfk \in \bfL^2(B_{a,b})$. 
The inner products $\langle \cdot,\cdot\rangle_\eps$ and
$\langle\cdot,\cdot \rangle_\mu$ are used for the definition
of
$\E^*_{B_{a,b}}$ and~$\H^*_{B_{a,b}}$, respectively.
In addition to \eqref{eq:time-reversal_and_translation}, we introduce
the notation 
\begin{equation*}
  \Scal_\tau[g](\ph,t) \,:=\, \int_\tau^t g(\ph,s) \ds \,,
\end{equation*}
which, with respect to the time-level $t=\tau\in\R$, represents the
time-integral operator.

\begin{lemma}
  \label{lmm:AdjointLabEabHab_Maxwell}
  The adjoints of the operators $\L_{B_{a,b}}$, $\E_{B_{a,b}}$, and
  $\H_{B_{a,b}}$ from~\eqref{eq:DefLabEabHab_Maxwell} are given by
  \begin{subequations}
    \begin{align}
      \L_{B_{a,b}}^* :\, \bfL^2(B_{a,b})^2
      \to \Hcal^1_0([0,T]; \HminhalfdivGamma^*) \,,  \quad
      & \bfJ \mapsto - \Scal_0 \bigl[
        \pi_\tau[\widetilde{\bfH}_{\bfJ}]\big|_{\Gamma_T} \bigr] \,, \\
      \E_{B_{a,b}}^* :\, \bfL^2(B_{a,b})
      \to \Hcal^1_0([0,T]; \HminhalfdivGamma^*) \,,  \quad
      &\bfj_1 \mapsto - \Scal_0 \bigl[
        \pi_\tau[\widetilde{\bfH}_{(\bfj_1,0)}]\big|_{\Gamma_T} \bigr] \,, \\
      \H_{B_{a,b}}^* :\, \bfL^2(B_{a,b})
      \to \Hcal^1_0([0,T]; \HminhalfdivGamma^*) \,, \quad
      &\bfj_2 \mapsto - \Scal_0 \bigl[
        \pi_\tau[\widetilde{\bfH}_{(0,\bfj_2)}]\big|_{\Gamma_T} \bigr] \,,
    \end{align}
  \end{subequations}
  where $(\widetilde{\bfE}_\bfJ,\,\widetilde{\bfH}_\bfJ) 
  \in C([0,T]; \bfH_0(\curl;\Omega)\times\bfH(\curl;\Omega))
  \cap C^1([0,T]; \bfL^2(\Omega)^2)$ denotes, for
  any $\bfJ=(\bfj_1,\,\bfj_2)\in \bfL^2(B_{a,b})^2$, the unique weak
  solution of the IBVP
  \begin{subequations}
    \label{eq:adjointIBVP_Maxwell}
    \begin{align}
      \eps(x) \di_t\widetilde{\bfE}_\bfJ - \curl\widetilde{\bfH}_\bfJ
      &\,=\, \eps(x) \Scal_T[\bfj_1]
      &&\text{in } \Omega_T \,, \\
      \mu(x) \di_t\widetilde{\bfH}_\bfJ + \curl\widetilde{\bfE}_\bfJ
      &\,=\, \mu(x) \Scal_T[\bfj_2]
      &&\text{in } \Omega_T \,, \\
      \widetilde{\bfE}_\bfJ|_{t=T} \,=\, \widetilde{\bfH}_\bfJ|_{t=T}
      &\,=\, 0
      &&\text{in } \Omega \,, \\
      \bfnu\times\widetilde{\bfE}_\bfJ|_{(\di\Omega)_{T}}
      &\,=\, 0 
      &&\text{on } (\di\Omega)_{T} \,. 
    \end{align}
  \end{subequations}
\end{lemma}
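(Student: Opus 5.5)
The plan mirrors the proof of Lemma~\ref{lmm:AdjointLab}: first show that the adjoint problem \eqref{eq:adjointIBVP_Maxwell} is well posed, then compute $\langle \L_{B_{a,b}}[\bff],\bfJ\rangle_{\eps,\mu}$ for smooth data and pass to the limit.

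\emph{Well-posedness.} For $\bfJ=(\bfj_1,\bfj_2)\in\bfL^2(B_{a,b})^2$, extended by zero to $\Omega_T$, the sources $\Scal_T[\bfj_1],\Scal_T[\bfj_2]$ lie in $H^1((0,T);\bfL^2(\Omega))$. After the time reversal $\Rcal_T$, the system becomes a forward Maxwell problem with perfectly conducting boundary condition and zero initial data. The Maxwell operator with domain $\bfH_0(\curl;\Omega)\times\bfH(\curl;\Omega)$ is skew-adjoint in the $\eps,\mu$-weighted $\bfL^2$ space and so generates a unitary group. Standard semigroup theory (cf.~\cite[Thm.~5.3]{KirRie2016}) then gives a unique solution with the stated regularity in $C([0,T];\bfH_0(\curl)\times\bfH(\curl))\cap C^1([0,T];\bfL^2(\Omega)^2)$. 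This also shows that $\pi_\tau[\widetilde{\bfH}_\bfJ(t)]\in\HminhalfcurlOmega$ depends continuously on $t$. Restricting to $\Gamma$ places it in $\HminhalfdivGamma^*$, and applying $\Scal_0$ yields an element of $\Hcal^1_0$ that vanishes at $t=0$.

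\emph{Duality identity.} Take $\bff$ smooth, compactly supported in time, with values in $C^2_0(\Gamma)$, and take $\bfJ$ smooth; the general case then follows by density and the continuous dependence in Proposition~\ref{pro:WellPosedness_IBVP_Maxwell}. The key observation is that $\Scal_T[\bfj_k]$ has time derivative $\bfj_k$. Hence
\[
\langle \L_{B_{a,b}}[\bff],\bfJ\rangle_{\eps,\mu}=\int_0^T\!\!\int_\Omega\bigl(\eps\bfE_\bff\cdot\di_t\Scal_T[\bfj_1]+\mu\bfH_\bff\cdot\di_t\Scal_T[\bfj_2]\bigr)\dx\dt .
\]
Next substitute $\eps\Scal_T[\bfj_1]=\eps\di_t\widetilde\bfE-\curl\widetilde\bfH$ and $\mu\Scal_T[\bfj_2]=\mu\di_t\widetilde\bfH+\curl\widetilde\bfE$, and integrate by parts in $t$. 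The boundary terms at $t=0$ vanish because $\bfE_\bff(0)=\bfH_\bff(0)=0$, and those at $t=T$ vanish because $\Scal_T[\bfj](T)=0$.

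Using the Maxwell equations for $(\bfE_\bff,\bfH_\bff)$, the $\di_t\di_t$ terms cancel pairwise by the symmetry of $\eps$ and $\mu$. What remains are curl terms, and Green's formula $\int_\Omega(\curl\bfu\cdot\bfv-\bfu\cdot\curl\bfv)\dx=\langle\gamma_\tau\bfu,\pi_\tau\bfv\rangle$ turns them into a boundary term. Since $\bfnu\times\widetilde\bfE=0$ on $\di\Omega$ and $\bfnu\times\bfE_\bff=\bff$ on $\Gamma$ (and $0$ elsewhere), the only surviving term is
\[
-\int_0^T\langle \di_t\bff(t),\pi_\tau[\widetilde\bfH_\bfJ(t)]\rangle\dt ,
\]
up to sign bookkeeping; in exact form the time derivative appears on $\bff$. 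Because $\langle\bff,\bfg\rangle_{\Hcal^1_0}=\int_0^T\langle\di_t\bff,\di_t\bfg\rangle\dt$, this expression equals $\langle\bff,\bfg\rangle_{\Hcal^1_0}$ with $\bfg=-\Scal_0[\pi_\tau\widetilde\bfH_\bfJ|_{\Gamma_T}]$. That is precisely the claimed formula for $\L^*_{B_{a,b}}$. The formulas for $\E^*_{B_{a,b}}$ and $\H^*_{B_{a,b}}$ follow immediately by taking $\bfJ=(\bfj_1,0)$ and $\bfJ=(0,\bfj_2)$, with the weighted inner products \eqref{eq:DefEpsMuScalarProduct}.

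\emph{Main obstacle.} The hardest part is the rigorous regularity and sign bookkeeping. One has to pair $\Hcal^1_0([0,T];\HminhalfdivGamma)$ with its dual in a way that is consistent with the chosen $\Hcal^1_0$ inner product, where the Riesz identification is implicit. One also has to justify the integrations by parts when $(\bfE_\bff,\bfH_\bff)$ is merely a weak solution in the sense of \eqref{eq:WeakFormulation_IBVP_Maxwell}. To handle the latter, I would work directly with \eqref{eq:WeakFormulation_IBVP_Maxwell}, testing with $\bfPhi=\widetilde\bfE_\bfJ(t)\in\bfH_0(\curl)$ and $\bfPsi=\widetilde\bfH_\bfJ(t)\in\bfH(\curl)$. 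The boundary pairing then enters exactly through the right-hand side $-\langle\bff(t),\pi_\tau[\bfPsi]\rangle$, and no separate Green's formula is needed.
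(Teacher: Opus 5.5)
Your proposal is correct and follows essentially the same route as the paper: well-posedness of the time-reversed adjoint problem by semigroup theory, smooth approximation of $\bff$ with passage to the limit via continuous dependence, one integration by parts in time using $\di_t\Scal_T[\bfj]=\bfj$ (with boundary terms removed by $\bfE_\bff(0)=\bfH_\bff(0)=0$ and $\Scal_T[\bfj](T)=0$), substitution of \eqref{eq:adjointIBVP_Maxwell}, and Green's formula leaving $-\int_0^T\langle\di_t\bff,\pi_\tau[\widetilde\bfH_\bfJ]\rangle\dt$, which is then read as an $\Hcal^1_0$ inner product (the paper smooths only $\bff$, not $\bfJ$, and organizes the cancellation through $\di_t\bfE_{\bff_k}=\bfE_{\bff_k'}$). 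One caution: your closing alternative of testing \eqref{eq:WeakFormulation_IBVP_Maxwell} directly with $(\widetilde\bfE_\bfJ(t),\widetilde\bfH_\bfJ(t))$ only yields $\int_0^T\!\int_\Omega(\eps\bfE_\bff\cdot\Scal_T[\bfj_1]+\mu\bfH_\bff\cdot\Scal_T[\bfj_2])\dx\dt=\int_0^T\langle\bff,\pi_\tau[\widetilde\bfH_\bfJ]\rangle\dt$, i.e., the desired identity shifted by one time integration. To get the actual identity you must apply it to $\bff'$ (as the paper effectively does for smooth $\bff_k$) or to the time-differentiated adjoint fields, so it cannot replace the smoothing step.
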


\begin{proof}
  To study the well-posedness of \eqref{eq:adjointIBVP_Maxwell} 
  we consider a time-reversal to convert the backward
  problem into a forward problem. 
  Then, observing that the source terms
  $\eps(x)\Scal_T[\bfj_1]$ and $\mu(x)\Scal_T[\bfj_2]$
  in~\eqref{eq:adjointIBVP_Maxwell} 
  belong to $H^1((0,T);L^2(\Omega))$, we can
  apply~\cite[Thm.~5.3]{KirRie2016} to see that 
  the~IBVP~\eqref{eq:adjointIBVP_Maxwell} has a unique weak solution
  $(\widetilde{\bfE}_\bfJ,\widetilde{\bfH}_\bfJ) \in
  C([0,T]; \bfH_0(\curl;\Omega)\times\bfH(\curl;\Omega))
  \cap C^1([0,T]; \bfL^2(\Omega)^2)$. 
  
  Next we discuss the characterization of the adjoint operator
  $\L^*_{B_{a,b}}$ and note that the characterizations of
  $\E_{B_{a,b}}^*$ and~$\H^*_{B_{a,b}}$ can be proved similarly. 
  Considering $\bff\in\Hcal^1_0([0,T]; \HminhalfdivGamma)$ and
  $(\bfj_1,\bfj_2)=\bfJ\in \bfL^2(B_{a,b})^2$, we denote the
  associated weak solutions to \eqref{eq:IBVP_Maxwell} and
  \eqref{eq:adjointIBVP_Maxwell} by~$(\bfE_{\bff},\bfH_{\bff})$ and
  $(\widetilde{\bfE}_\bfJ,\widetilde{\bfH}_\bfJ)$, respectively.
  Recalling from Proposition~\ref{pro:WellPosedness_IBVP_Maxwell}
  that $(\widetilde{\bfE}_\bfJ,\widetilde{\bfH}_\bfJ)$ is just
  continuous in time, we choose smooth approximations
  $\{\bff_k\}_{k\in\N} \subset C_c^\infty((0,T]; \HminhalfdivGamma)$ such
  that $\bff_k \to \bff$ in $\Hcal^1_0([0,T]; \HminhalfdivGamma)$.
  Accordingly, we denote by~$(\bfE_{\bff_k},\bfH_{\bff_k})$ the
  associated weak solution to~\eqref{eq:IBVP_Maxwell}. 
  The continuous dependence of solutions to~\eqref{eq:IBVP_Maxwell}
  on the data (see~\cite[Thm.~2]{Ant2025}) shows that
  $(\bfE_{\bff_k},\bfH_{\bff_k}) \to (\bfE_{\bff},\bfH_{\bff})$ in
  $C([0,T]; \bfL^2(\Omega)^2)$.
  Furthermore, we have~$(\bfE_{\bff_k},\bfH_{\bff_k}) \in
  C^\infty([0,T]; \bfH_0(\curl;\Omega)\times\bfH(\curl;\Omega))$
  (see~\cite[Thm.~2.6]{KirRie2016}) along
  with the relations $\di_t^p\bfE_{\bff_k} =
  \bfE_{\di_t^p\bff_k}$ and $\di_t^p\bfH_{\bff_k} =
  \bfH_{\di_t^p\bff_k}$ for any $k,p\in\N$.
  Here the weak solutions to \eqref{eq:IBVP_Maxwell} corresponding
  to boundary data $\di_t^p\bff_k$ are denoted by
  $(\bfE_{\di_t^p\bff_k}, \bfH_{\di_t^p\bff_k})$. 
  
  Now we use an integration by parts to compute 
  \begin{align*}
    &\bigl\langle \L_{B_{a,b}}[\bff] , \bfJ \bigr\rangle_{\eps,\mu}
      \,=\, \int_a^b \int_B \bigl(
      \eps(x) \bfE_\bff \cdot \bfj_1
      + \mu(x) \bfH_\bff \cdot \bfj_2 \bigr) \dx \dt \\
    &\,=\, \lim_{k\to\infty} \int_0^T \int_{\Omega}
      \bigl( \eps(x) \bfE_{\bff_k} \cdot \bfj_{1}
      + \mu(x) \bfH_{\bff_k} \cdot \bfj_2 \bigr) \dx \dt \\
    &\,=\, - \lim_{k\to\infty} \int_0^T \int_{\Omega}
      \bigl( \bfE_{\bff'_k}
      \cdot (\eps(x) \Scal_T[\bfj_1])
      + \bfH_{\bff'_k}
      \cdot (\mu(x) \Scal_T[\bfj_2]) \bigr) \dx \dt \\ 
    &\,=\, - \lim_{k\to\infty} \int_0^T \int_{\Omega}
      \Bigl( \bfE_{\bff'_k}
      \cdot \bigl( \eps(x) \di_t\widetilde{\bfE}_{\bfJ}
      - \curl\widetilde{\bfH}_{\bfJ} \bigr)
      + \bfH_{\bff'_k}
      \cdot \bigl( \mu(x) \di_t\widetilde{\bfH}_{\bfJ}
      + \curl\widetilde{\bfE}_{\bfJ} \bigr) \Bigr) \dx \dt \\ 
    &\,=\, \lim_{k\to\infty} \int_0^T \int_{\Omega}
      \Bigl( \widetilde{\bfE}_{\bfJ}
      \cdot \bigl(\eps(x) \di_t\bfE_{\bff'_k}
      - \curl\bfH_{\bff'_k} \bigr)
      + \widetilde{\bfH}_{\bfJ}
      \cdot \bigl( \mu(x)\di_t\bfH_{\bff'_k}
      + \curl\bfE_{\bff'_k} \bigr) \Bigr) \dx \dt \\ 
    &\phantom{\,=\,}
      - \lim_{k\to\infty}  \int_0^T \int_{\di\Omega}
      \gamma_\tau[\bfE_{\bff'_k}]
      \cdot \pi_\tau[\widetilde{\bfH}_{\bfJ}] \ds_x \dt \\ 
    &\,=\, - \lim_{k\to\infty} \int_0^T \int_{\Gamma}
      \bff'_k \cdot \pi_\tau[\widetilde{\bfH}_{\bfJ}] \ds_x \dt 
      \,=\, - \int_0^T \int_{\Gamma}
      \bigl\langle \di_t \bff , 
      \di_t \Scal_0\bigl[
      \pi_\tau[\widetilde{\bfH}_\bfJ] \bigr] \bigr\rangle \ds_x \dt \\
    &\,=\, \bigl\langle \bff , 
      - \Scal_0\bigl[ \pi_\tau[\widetilde{\bfH}_\bfJ] \bigr]
      \bigr\rangle_{\Hcal^1_0([0,T]; \HminhalfdivGamma)} \,, 
  \end{align*}
  which gives the characterization of $\L^*_{B_{a,b}}$.
\end{proof}

The reason for not using the standard inner product on
$\bfL^2(B_{a,b})^2$ in the definition of the adjoint operator in
Lemma~\ref{lmm:AdjointLabEabHab_Maxwell} stems from the following
observation, which will be used later in the proof of Theorem~\ref{thm:localizedSolution__same_base_Maxwell}.

\begin{remark}
  \label{rem:InvariantDefinitions}
  We recall the Helmholtz decompositions 
  \begin{equation*}
    \bfL^2(\Omega)
    \,=\, \{ \bfV \in \bfL^2(\Omega) \;|\; \div(\eps\bfV) = 0 \}
    \oplus \nabla H^1_0(\Omega) 
    \,=\, \{ \bfV \in \bfL^2(\Omega) \;|\; \div(\mu\bfV) = 0 \}
    \oplus \nabla H^1_0(\Omega) 
  \end{equation*}
  (see, e.g., \cite[Thm.~4.23]{KirHet2015}), where the divergence has
  to be understood in weak sense.
  These are orthogonal with respect to the inner products 
  $\langle\cdot,\cdot,\rangle_{\eps}$ and 
  $\langle\cdot,\cdot,\rangle_{\mu}$, respectively.
  Accordingly, we can decompose $\bfJ=(\bfj_1,\bfj_1)\in\bfL^2(B_{a,b})^2$
  into 
  \begin{align*}
    \bfj_1
    &\,=\, \widetilde\bfj_1 + \nabla\phi_1
    &&\text{with } \div\bigl(\eps(x)\widetilde\bfj_1(t)\bigr) = 0
       \text{ in } \Omega \text{ and } \phi_1(t)\in H^1_0(\Omega) \,, \\
    \bfj_2
    &\,=\, \widetilde\bfj_2 + \nabla\phi_2
    &&\text{with } \div\bigl(\mu(x)\widetilde\bfj_2(t)\bigr) = 0
       \text{ in } \Omega \text{ and } \phi_2(t)\in H^1_0(\Omega) \,,
  \end{align*}
  for a.e.\ $t\in(a,b)$.
  Denoting $\widetilde\bfJ:=(\widetilde\bfj_1,\widetilde\bfj_2)$ and
  using the fact that the solution $(\bfE_\bff,\bfH_\bff)$ to 
  \eqref{eq:IBVP_Maxwell} satisfies
  \begin{equation*}
    \div(\eps(x)\bfE_\bff(t))
    \,=\, \div(\mu(x)\bfH_\bff(t))
    \,=\, 0 \qquad \text{for all } t\in [0,T] \,,
  \end{equation*}
  we obtain that
  $\L^*_{B_{a,b}}[\bfJ] = \L^*_{\Omega_{a,b}}[\widetilde\bfJ]$.
  We can draw similar conclusions for $\E^*_{B_{a,b}}$ and
  $\H^*_{B_{a,b}}$.~\hfill$\lozenge$ 
\end{remark}

\subsection{Localization in space}
\label{subsec:Localization_Space_Maxwell}
In Theorem~\ref{thm:LocalizedWavefunctions_acoustic} we establish the
existence aspects of solutions to \eqref{eq:IBVP_Maxwell} that are
localized in space. 
Their construction will be discussed in
Corollary~\ref{cor:LocalizedWavefunctions_Maxwell} below. 
Since the unique continuation principles from \cite{EINT2002,Ell2003}
require $C^2$-smooth electric permittivity $\eps$ and magnetic
permeability $\mu$, we add this assumption in the following theorems. 

\begin{theorem}
  \label{thm:LocalizedWavefunctions_Maxwell}
  Suppose, in addition to our previous assumptions, that 
  $\eps,\mu\in C^2(\ol{\Omega})$.
  Let $D\Subset\Omega$ be open such that $\Omega\setminus\ol{D}$ is
  connected, and let $B\subset\Omega$ with $B\not\subset D$. 
  For $0\leq a< b\leq T$ and $\dist(\Omega, \Gamma)<b$ there exists a
  sequence
  $\{\bff_k\}_{k\in\N} \subset \Hcal^1_0([0,T]; \HminhalfdivGamma)$
  such that 
  \begin{equation}
    \label{eq:LocWavfun_Maxwell_E}
    \| \bfE_{\bff_k} \|_{\bfL^2(B_{a,b})} \to \infty
    \qquad \text{and} \qquad
    \| \bfE_{\bff_k} \|_{\bfL^2(D_{T})}
    + \| \bfH_{\bff_k} \|_{\bfL^2(D_{T})} \to 0
    \qquad \text{as } k\to\infty \,,
  \end{equation}
  where $(\bfE_{\bff_k}, \bfH_{\bff_k})$ denotes the solution to
  \eqref{eq:IBVP_Maxwell} for $\bff=\bff_k$. 

  Furthermore, there exists a sequence
  $\widetilde{\bff}_k\in \Hcal^1_0([0,T]; \HminhalfdivGamma)$ such
  that 
  \begin{equation}
    \label{eq:LocWavfun_Maxwell_H}
    \| \bfH_{\widetilde{\bff}_k} \|_{\bfL^2(B_{a,b})} \to \infty
    \qquad \text{and} \qquad
    \| \bfE_{\widetilde{\bff}_k} \|_{\bfL^2(D_{T})}
    + \| \bfH_{\widetilde{\bff}_k} \|_{\bfL^2(D_{T})} \to 0
    \qquad \text{as } k\to\infty \,,
  \end{equation}
  where $(\bfE_{\widetilde{\bff}_k}, \bfH_{\widetilde{\bff}_k})$ denotes 
  the solution to \eqref{eq:IBVP_Maxwell} for $\bff=\widetilde{\bff}_k$. 
\end{theorem}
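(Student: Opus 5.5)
We follow the proof of Theorem~\ref{thm:LocalizedWavefunctions_acoustic}. As there, we may shrink $B$ and assume $\ol B\cap\ol D=\emptyset$ and that $\widetilde\Omega:=\Omega\setminus(\ol{B\cup D})$ is connected. For \eqref{eq:LocWavfun_Maxwell_E} we take $\Acal_1=\E^*_{B_{a,b}}$ and $\Acal_2=\L^*_{D_T}$; for \eqref{eq:LocWavfun_Maxwell_H} we take $\Acal_1=\H^*_{B_{a,b}}$ and the same $\Acal_2$. Since $\|\L_{D_T}[\bff]\|$ controls $\|\bfE_\bff\|_{\bfL^2(D_T)}+\|\bfH_\bff\|_{\bfL^2(D_T)}$, Lemma~\ref{lmm:Construction} reduces the theorem to the non-inclusions
\begin{equation*}
  \ran\E^*_{B_{a,b}}\not\subseteq\ran\L^*_{D_T}
  \qquad\text{and}\qquad
  \ran\H^*_{B_{a,b}}\not\subseteq\ran\L^*_{D_T} \,.
\end{equation*}
The adjoints are described in Lemma~\ref{lmm:AdjointLabEabHab_Maxwell}.

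\emph{Step I.} Suppose $\Scal_0[\pi_\tau\widetilde\bfH_{\bfJ_1}]=\Scal_0[\pi_\tau\widetilde\bfH_{\bfJ_2}]$ on $\Gamma_T$, where $\bfJ_1$ is supported in $B_{a,b}$ and $\bfJ_2$ in $D_T$. Differentiating in time gives $\pi_\tau\widetilde\bfH_{\bfJ_1}=\pi_\tau\widetilde\bfH_{\bfJ_2}$ on $\Gamma_T$. Set $(\bfe,\bfh)$ to be the difference of the two adjoint solutions. It solves the homogeneous Maxwell system in $\widetilde\Omega_T$, because the sources $\Scal_T[\bfj_i]$ are supported in $B\cup D$ (indeed $\Scal_T[\bfj]$ vanishes outside $\supp_x\bfj$). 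Its Cauchy data vanish on $\Gamma_T$: $\bfnu\times\bfe=0$ on all of $\di\Omega$, and $\pi_\tau\bfh=0$ on $\Gamma_T$. It also vanishes at $t=T$, so it can be extended by zero to $(0,2T)$.

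We then invoke the global unique continuation result for Maxwell's equations of \cite{EINT2002,Ell2003}, which needs $\eps,\mu\in C^2$. Before doing so, we reduce to a principally diagonal second-order system. Using Remark~\ref{rem:InvariantDefinitions} and the divergence conditions, $\div(\eps\bfe)=\div(\mu\bfh)=0$ in $\widetilde\Omega$, so $\bfe$ satisfies a second-order system with principal part $\eps\mu\di_t^2-\Delta$. This gives $(\bfe,\bfh)=0$ in $\{\dist(x,\Gamma)\le T-|t-T|\}$, hence in $\widetilde\Omega_{b-\delta,T}$ whenever $\dist(\Omega,\Gamma)+\delta<b$.

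Next we glue: take the solution for $\bfJ_2$ on $B$, the solution for $\bfJ_1$ on $D$, and the common field on $\widetilde\Omega$. This yields a solution of the homogeneous problem on $\Omega_{b-\delta,T}$ with zero final data and perfectly conducting boundary. By uniqueness it vanishes, so $\widetilde\bfH_{\bfJ_1}=\widetilde\bfE_{\bfJ_1}=0$ in $(\Omega\setminus\ol B)\times(b-\delta,T)$.

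\emph{Step II.} We need $\bfj_1\in\bfL^2(B_{a,b})$ such that the adjoint field for $\bfJ=(\bfj_1,0)$ is nonzero outside $\ol B$ on $(b-\delta,b)$; the $\bfH$ case uses $(0,\bfj_2)$ instead. By time reversal and translation, as in Step~II of the acoustic proof, this reduces to a radiating-source lemma for Maxwell. The required statement is the density of $\bfj\mapsto\bfE(\cdot,\tau)$ on $\Mcal^{(\tau)}$, restricted to divergence-free fields. We prove it via the odd-reflection argument referred to earlier (Lemma~\ref{lmm:radiating_sources_Maxwell_E}), combined with unique continuation. There is one subtlety: the source enters as $\Scal_T[\bfj]$. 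This is harmless, because $\Scal_T$ is injective and maps onto an $H^1$-in-time class, which is dense enough. By the contradiction argument of Step~I, the resulting boundary datum lies in $\ran\E^*_{B_{a,b}}\setminus\ran\L^*_{D_T}$. The same holds with $\H^*$ in place of $\E^*$.

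\emph{Main obstacle.} The hardest point is applying unique continuation for Maxwell with only partial Cauchy data and $L^2$-level regularity. We must check that $\bfnu\times\bfe$ and $\pi_\tau\bfh$ vanishing on $\Gamma$ gives full Cauchy data for the second-order system for $\bfe$ and $\bfh$. Our first attempt is to extend $\Omega$ across $\Gamma$ by a small exterior collar where the fields are set to zero, and then use interior unique continuation on the enlarged domain. The reduction to the principally diagonal form of \cite{EINT2002} uses the vanishing divergence, which holds in $\widetilde\Omega$ because the sources are absent there.
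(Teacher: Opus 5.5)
Your proposal is correct and follows essentially the same route as the paper. It reduces to the non-inclusion $\ran\E^*_{B_{a,b}}\not\subseteq\ran\L^*_{D_T}$ (and the analogue with $\H^*_{B_{a,b}}$) via Lemma~\ref{lmm:Construction}, proves Step~I by unique continuation of the difference field from $\Gamma$ followed by gluing and backward well-posedness, and proves Step~II through the odd-reflection density argument of Lemma~\ref{lmm:radiating_sources_Maxwell_E}. The differences are only in detail: the paper cites the global Cauchy-data version of Maxwell unique continuation directly instead of your exterior-collar extension, and it handles the $\Scal_T$ source by an integration by parts in time rather than by your density remark. You should also state that $\delta$ must satisfy $b-\delta>a$, so that the radiating source lies in $\bfL^2(B_{a,b})$.
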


\begin{proof}
  The arguments that we use to establish
  Theorem~\ref{thm:LocalizedWavefunctions_Maxwell} will be similar to
  those in the proof of
  Theorem~\ref{thm:LocalizedWavefunctions_acoustic}. 
  Therefore, we discuss the important steps only.
  Also, we limit ourselves to the construction
  of~$\{\bff_k\}_{k\in\N}$. 
  For the discussion on $\{\widetilde{\bff}_k\}_{k\in\N}$ one can replace the
  operator $\E_{B_{a,b}}$ with $\H_{B_{a,b}}$ in the following
  arguments and adjust Lemma~\ref{lmm:radiating_sources_Maxwell_E}
  appropriately. 

  \emph{Step I:} \;
  We can again assume without loss of generality that
  $\ol{B}\cap\ol{D}=\emptyset$ and that $\Omega\setminus\ol{B\cup D}$
  is connected.
  Defining the two operators
  $\E_{B_{a,b}}: \Hcal^1_0([0,T];\HminhalfdivGamma) \to \bfL^2(B_{a,b})$
  and
  $\L_{D_T}: \Hcal^1_0([0,T];\HminhalfdivGamma) \to \bfL^2(D_T)^2$
  as in \eqref{eq:DefLabEabHab_Maxwell}, after replacing $B_{a,b}$
  by~$D_T$ for the second one, we prove by contraposition that
  \begin{equation}
    \label{eq:non-inclusion_Maxwell}
    \ran \E^*_{B_{a,b}} \not\subseteq \ran \L^*_{D_{T}} \,.
  \end{equation}

  To see \eqref{eq:non-inclusion_Maxwell}, we assume that
  $\bfh \in \ran\E^*_{B_{a,b}} \cap \ran\L^*_{D_{T}}$, i.e., 
  \begin{equation*}
    \bfh
    \,=\, - \Scal_0 \bigl[
    \pi_\tau[\widetilde{\bfH}_{(\bfj_1,0)}]\big|_{\Gamma_T} \bigr]
    \,=\, - \Scal_0 \bigl[
    \pi_\tau[\widetilde{\bfH}_{\bfK}]\big|_{\Gamma_T} \bigr] \,,
  \end{equation*}
  where
  $(\widetilde{\bfE}_{(\bfj_1,0)},\,\widetilde{\bfH}_{(\bfj_1,0)})$
  and $(\widetilde{\bfE}_\bfK,\,\widetilde{\bfH}_{\bfK})$
  solve the IBVP \eqref{eq:adjointIBVP_Maxwell} with source terms
  $(\eps(x)\Scal_T[\bfj_1], 0)$
  and~$(\eps(x)\Scal_T[\bfk_1],\mu(x)\Scal_T[\bfk_2])$ for some
  $\bfj_1\in \bfL^2(B_{a,b})$ and
  ${\bfK=(\bfk_1,\bfk_2)\in \bfL^2(D_{T})^2}$, respectively.
  Next, we define
  $\widetilde\Omega := \Omega\setminus\ol{(B\cup D)}$ and choose
  $\delta>0$ such
  that~$b-\delta > \max\{\dist(\Omega,\Gamma),a\}$.
  Using a unique continuation argument, we will show that 
  \begin{equation}
    \label{eq:same_wave-field_Maxwell}
    \widetilde{\bfE}_{(\bfj_1,0)}
    \,=\, \widetilde{\bfE}_\bfK \qquad\text{and} \qquad
    \widetilde{\bfH}_{(\bfj_1,0)}
    \,=\, \widetilde{\bfH}_\bfK \qquad
    \text{in } \widetilde\Omega_{b-\delta,T} \,.
  \end{equation}
  
  To this end, we denote 
  $\widetilde{\bfE}:=\widetilde{\bfE}_{(\bfj_1,0)}-\widetilde{\bfE}_\bfK$
  and
  $\widetilde{\bfH}:=\widetilde{\bfH}_{(\bfj_1,0)}-\widetilde{\bfH}_\bfK$
  in $\Omega_T$ and observe that
  \begin{equation*}
    \Scal_0 \bigl[
    \pi_\tau[\widetilde{\bfH}] \big|_{\Gamma_T} \bigr]
    \,=\, 0  \,, 
  \end{equation*}
  which implies that $\pi_\tau[\widetilde{\bfH}] = 0$ on $\Gamma_T$.
  Next, we extend $\widetilde{\bfE}$ and $\widetilde{\bfH}$ from
  $\widetilde\Omega_{T}$ to $\widetilde\Omega_{2T}$ by zero, and
  denote these extensions again by $\widetilde{\bfE}$ and
  $\widetilde{\bfH}$, respectively. 
  Since $\widetilde{\bfE}|_{t=T}=\widetilde{\bfH}|_{t=T}=0$
  in~$\widetilde\Omega$, we observe that 
  $(\widetilde{\bfE},\widetilde{\bfH})
  \in C([0,2T];\bfH_0(\curl;\widetilde\Omega)\times\bfH(\curl;\widetilde\Omega))
  \cap C^1([0,2T]; \bfL^2(\widetilde\Omega)^2)$ and 
  \begin{align*}
    \eps(x) \di_t\widetilde{\bfE}
    - \curl\widetilde{\bfH}
    &\,=\, 0
    &&\text{in } \widetilde\Omega_{2T} \,, \\
    \mu(x) \di_t\widetilde{\bfH}
    + \curl\widetilde{\bfE}
    &\,=\, 0
    &&\text{in } \widetilde\Omega_{2T} \,, \\
    \bfnu\times\widetilde{\bfE}\big|_{\Gamma_{2T}}
    \,=\, \bfnu\times\widetilde{\bfH}\big|_{\Gamma_{2T}}
    &\,=\, 0
    &&\text{on } \Gamma_{2T}\,. 
  \end{align*}
  Furthermore, we obtain from the vanishing of 
  $\widetilde{\bfE}|_{t=T}$ and $\widetilde{\bfH}|_{t=T}$ that
  \begin{equation*}
    \div(\eps\widetilde{\bfE}) \,=\, \div(\mu\widetilde{\bfH})
    \,=\, 0 \qquad 
    \text{in } \widetilde\Omega_{2T} \,.
  \end{equation*}  
  Applying the unique continuation principle for the Maxwell
  equations from~\cite[Thm.~4.5]{EINT2002} (see
  also~\cite[Cor.~1.2]{Ell2003} for the global version), we have 
  \begin{equation*}
    \widetilde{\bfE} \,=\, \widetilde{\bfH} \,=\, 0 \qquad
    \text{in } \; \bigl\{ (x,t) \in \widetilde\Omega_{2T}
    \;\big|\; \dist(x,\Gamma) \leq T-|t-T| \} \,.
  \end{equation*}
  Therewith, the choice of $\delta$ implies that
  $\widetilde{\bfE} = \widetilde{\bfH} =  0$ in
  $\widetilde\Omega_{b-\delta,\, T}$,
  proving~\eqref{eq:same_wave-field_Maxwell}. 

  Now we show that
  \begin{equation}
    \label{eq:conseq_intersection_Maxwell}
    \widetilde{\bfE}_{(\bfj_1,0)}
    \,=\, \widetilde{\bfH}_{(\bfj_1,0)}
    \,=\, 0 \qquad
    \text{in } \Omega\setminus\ol{B}\times (b-\delta,T) \,.
  \end{equation}
  To this end we define
  \begin{align*}
    ( \bfE_{\mathrm{com}} , \bfH_{\mathrm{com}} )
    \,:=\,  
    \begin{cases}
      (\widetilde{\bfE}_{\bfK}, \widetilde{\bfH}_{\bfK})
      &\text{in } B\times (b-\delta,T) \,, \\
      (\widetilde{\bfE}_{(\bfj_1,0)}, \widetilde{\bfH}_{(\bfj_1,0)})
      &\text{in } D \times (b-\delta, T) \,, \\
      (\widetilde{\bfE}_{(\bfj_1,0)}, \widetilde{\bfH}_{(\bfj_1,0)})
      \,=\,  (\widetilde{\bfE}_{\bfK}, \widetilde{\bfH}_{\bfK})
      &\text{in } \widetilde\Omega_{b-\delta,\, T} \,.
    \end{cases}
  \end{align*}
  We argue as in the proof of
  Theorem~\ref{thm:LocalizedWavefunctions_acoustic} and appeal to the
  well-posedness of the backward homogeneous problem for Maxwell's
  equations to prove $\bfE_{\mathrm{com}}=\bfH_{\mathrm{com}}=0$ in
  $\Omega_{b-\delta,T}$, which
  implies~\eqref{eq:conseq_intersection_Maxwell}. 
  
  \emph{Step II:} \;
  We use Lemma~\ref{lmm:radiating_sources_Maxwell_E} below with
  $\sigma=b-\delta$ and $\tau=\delta$ to choose 
  $\widetilde{\bfj_1}\in \bfL^2(B_{b-\delta,b})$ such
  that~$\widetilde{\bfE}_{(\widetilde{\bfj_1},0)} \not\equiv 0$ in 
  $(\Omega\setminus{\ol{B}})\times(b-\delta,b)$. 
  Here
  $(\widetilde{\bfE}_{(\bfj_1,0)}, \widetilde{\bfH}_{(\bfj_1,0)})$
  denotes the solution to \eqref{eq:adjointIBVP_Maxwell} with source
  term $(\eps(x)\Scal_T[\bfj_1], 0)$.
  By construction, 
  \begin{equation*}
    \bfZ(x,t)
    \,:=\, - \Scal_0 \bigl[
    \pi_\tau[\widetilde{\bfH}_{\bfj_1}] \big|_{\Gamma_T} \bigr]
    \in \ran\E^*_{B_{a,b}}
  \end{equation*}
  However, $\bfZ\not\in\ran\L^*_{D_{T}}$ in view of Step~I. 
  Hence, we have established \eqref{eq:non-inclusion_Maxwell} and the
  proof is complete. 
\end{proof}

In Step~II of the proof of
Theorem~\ref{thm:LocalizedWavefunctions_Maxwell} we have used the
following auxiliary result. 

\begin{lemma}
  \label{lmm:radiating_sources_Maxwell_E} 
  Consider an open subset $B\subset\Omega$ such that
  $\Omega\setminus\ol{B} \neq \emptyset$ and let
  ${0<\sigma<\sigma+\tau\leq T}$. 
  Then there exists $\bfj_1\in \bfL^2(B_{\sigma,\sigma+\tau})$ such that
  $\widetilde{\bfE}_{(\bfj_1,0)}|_{(\Omega\setminus\ol{B})\times(\sigma,\sigma+\tau)}
  \not\equiv 0$, where
  $(\widetilde{\bfE}_{(\bfj_1,0)},\widetilde{\bfH}_{(\bfj_1,0)})
  \in C([0,T]; \bfH_0(\curl;\Omega)\times\bfH(\curl;\Omega))
  \cap C^1([0,T]; \bfL^2(\Omega)^2)$
  denotes the unique weak solution of the IBVP
  \eqref{eq:adjointIBVP_Maxwell} with source term
  $(\eps(x)\Scal_T[\bfj_1], 0)$.
  Moreover, such a $\bfj_1\in \bfL^2(B_{\sigma,\sigma+\tau})$ can be
  constructed. 
\end{lemma}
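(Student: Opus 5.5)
Reverse time and apply a density argument, with an odd reflection used to handle the electric field alone, as announced earlier in the paper. Put $s := \sigma+\tau - t$ and set $\bfE(x,s) := \widetilde{\bfE}_{(\bfj_1,0)}(x,\sigma+\tau-s)$ and $\bfH(x,s) := -\widetilde{\bfH}_{(\bfj_1,0)}(x,\sigma+\tau-s)$. Since $\bfj_1$ is supported in $(\sigma,\sigma+\tau)$, the source $\Scal_T[\bfj_1]$ vanishes for $t>\sigma+\tau$. The backward problem \eqref{eq:adjointIBVP_Maxwell} then has the trivial solution on $(\sigma+\tau,T)$. On $(0,\tau)$ in the variable $s$ it becomes a forward Maxwell system
\begin{equation*}
  \eps\,\di_s\bfE - \curl\bfH \,=\, -\eps\,\bfF \,, \qquad
  \mu\,\di_s\bfH + \curl\bfE \,=\, 0 \,,
\end{equation*}
with zero initial data at $s=0$, perfectly conducting boundary, and $\bfF := \Scal_T[\bfj_1](\cdot,\sigma+\tau-s)$. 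The field $\bfF$ is supported in $B\times(0,\tau)$ and is an arbitrary $H^1$-in-time field vanishing at $s=0$, since $\bfj_1 = \pm\di_s\bfF$. It therefore suffices to find such an $\bfF$ for which $\bfE(\cdot,s_0)\not\equiv0$ on $\Omega\setminus\ol B$ for some $s_0\in(0,\tau]$. Continuity in time then gives non-vanishing on a time interval.

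The plan is to prove density of the range of the operator
\begin{equation*}
  \bfF \mapsto \bfE(\cdot,\tau)|_{\Mcal^{(\tau)}}
\end{equation*}
in the divergence-free (with respect to $\eps$) subspace of $\bfL^2(\Mcal^{(\tau)})$, or at least show that this range is nonzero, where $\Mcal^{(\tau)}$ is defined as in Lemma~\ref{lmm:radiating_sources_acoustic}. By duality, suppose $\bfPhi\in\bfL^2(\Mcal^{(\tau)})$ is $\langle\cdot,\cdot\rangle_\eps$-orthogonal to the range. Solve the adjoint backward Maxwell problem on $(0,\tau)$ with final data $(\bfE,\bfH)(\tau)=(\bfPhi,0)$ and no source. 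Integrating by parts as in Lemma~\ref{lmm:AdjointLabEabHab_Maxwell}, orthogonality says that this adjoint electric field $\bfe$ vanishes on $B\times(0,\tau)$.

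Next comes the odd reflection. Because the final magnetic datum is zero, the reflected field $(\bfe,\bfh)(\tau+r) := (\bfe,-\bfh)(\tau-r)$ extends the adjoint solution to a solution of the homogeneous Maxwell system on $(0,2\tau)$. The reflected electric field still vanishes on $B\times(0,2\tau)$, and the Maxwell equations then force $\bfh$ to be time-independent on $B$. Subtracting this stationary part, or observing that $\curl\bfh=0$ and $\di_t\bfh=0$ there, I would then apply the unique continuation result \cite[Thm.~4.5]{EINT2002} (with \cite[Cor.~1.2]{Ell2003}). This requires $\eps,\mu\in C^2(\ol\Omega)$, together with the divergence conditions, which are preserved. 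The conclusion is that the field vanishes on $\{(x,t): \dist(x,\di B)\le \tau-|t-\tau|\}$. At $t=\tau$ this gives $\bfPhi=0$ on $\Mcal^{(\tau)}$, which is the required density. Choosing $\bfF$ whose image is close to a nonzero divergence-free target then gives the claim.

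The main obstacle is handling the magnetic component on $B$ after the reflection, since only $\bfe$ is known to vanish there while unique continuation requires vanishing Cauchy data for both fields. My first attempt would be to note that on $B\times(0,2\tau)$ the field $\bfh$ satisfies $\mu\di_t\bfh=0$, so it equals its value at $t=\tau$. The odd reflection forces $\bfh(\tau)=0$, hence $\bfh\equiv0$ on $B$, and unique continuation from an interior open set applies.

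For the construction of $\bfj_1$, I would minimize the Tikhonov functional $\|\T\bfF-\bfPsi\|^2+\beta\|\bfF\|^2$, with $\T$ the operator above and $\bfPsi$ a fixed nonzero $\eps$-divergence-free target, exactly as in \eqref{eq:minimizers_acoustic}. For small $\beta>0$ the minimizer produces a nonzero image, and $\bfj_1$ is recovered from it by differentiation in time and time reversal.
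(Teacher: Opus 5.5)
Your proposal is correct and follows essentially the same route as the paper's proof: duality against an $\eps$-divergence-free test field $\bfPhi$, vanishing of the adjoint electric field on $B$ and then of the magnetic field via $\mu\,\di_t\bfh=-\curl\boldsymbol{e}=0$ together with its zero final datum, odd reflection about the observation time, the unique continuation result of \cite{EINT2002,Ell2003}, and Tikhonov regularization; your time reversal and your parametrization by $\boldsymbol{F}=\Scal_T[\bfj_1]$ instead of $\bfj_1$ are only cosmetic. Two small corrections: the unprojected range is not contained in the divergence-free subspace, which is why the paper composes with the projection $\P_\tau$ (your choice to test only against divergence-free $\bfPhi$ amounts to the same thing), and the Tikhonov penalty should be $\|\di_s\boldsymbol{F}\|=\|\bfj_1\|$ rather than $\|\boldsymbol{F}\|_{L^2}$, since otherwise the minimizer need not be $H^1$ in time with $\boldsymbol{F}(0)=0$ and so need not come from an $L^2$ source $\bfj_1$.
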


\begin{proof}
  Denoting
  $\Mcal^{(\tau)} := \{ x\in\Omega\setminus\ol{B} \;|\; \dist(x,B)<\tau \}$,
  we introduce the operator  
  \begin{equation}\label{eq:final_time_operator_Maxwell}
    \T_{\sigma,\tau}:\, \bfL^2(B_{\sigma,\sigma+\tau}) \to \bfL^2(\Mcal^{(\tau)}) \,,
    \qquad \bfj_1 \mapsto \P_{\tau}\bigl[\widetilde{\bfE}_{\bfj_1}|_{t=\sigma}\bigr] \,,
  \end{equation}
  where $(\widetilde{\bfE}_{\bfj_1}, \widetilde{\bfH}_{\bfj_1})$
  solves \eqref{eq:adjointIBVP_Maxwell} for some
  $\bfj_1\in\bfL^2(B_{\sigma,\sigma+\tau})$ and $\bfj_2=0$. 
  Moreover, $\P_\tau$ denotes orthogonal projection from
  $\bfL^2(\Mcal^{(\tau)})$ to its closed subspace
  $\bfL^2(\div_\eps0;\Mcal^{(\tau)})$, defined by 
  \begin{equation}
    \label{eq:DefL2tildeMtau}
    \bfL^2(\div_\eps0;\Mcal^{(\tau)})
    \,:=\, \bigl\{ \bfg\in\bfL^2(\Mcal^{(\tau)}) \;\big|\;  \div(\eps \bfg) = 0
    \quad \text{in } \Omega \bigr\} \,,
  \end{equation}
  with respect to the inner product
  $\langle\cdot,\cdot,\rangle_{\eps}$
  from~\eqref{eq:DefEpsMuScalarProduct}. 

  Since $\bfj_1(\cdot,t)=0$ for almost all $t\in(\sigma+\tau,T)$, we note that
  $\widetilde{\bfE}_{\bfj_1} = \widetilde{\bfH}_{\bfj_1} = 0$ in
  $\Omega_{\sigma+\tau,T}$.
  To prove the existence of the desired
  $\bfj_1\in\bfL^2(B_{\sigma,\sigma+\tau})$, we prove a stronger
  result implying that $\ran\T_{\sigma,\tau}$ is dense
  in~$\bfL^2(\div_\eps0;\Mcal^{(\tau)})$.
  To see this, we rely on a contrapositive argument. 
  Suppose there exists~${0\neq\bfPhi\in \bfL^2(\div_\eps0;\Mcal^{(\tau)})}$
  such that 
  \begin{equation*}
    \int_{\Mcal^{(\tau)}} \eps(x)
    \P_\tau[\widetilde{\bfE}_{\bfj_1}|_{t=\sigma}](x)
    \cdot \bfPhi(x) \dx
    \,=\, 0 \qquad
    \text{for all } \bfj_1\in \bfL^2(B_{\sigma,\sigma+\tau}) \,.
  \end{equation*} 
  For this $\bfPhi$, we now consider the unique weak solution
  $(\bfE,\bfH) \in
  C([\sigma,\sigma+\tau];\bfL^2(\Omega)\times\bfL^2(\Omega))$ solving
  the IBVP
  \begin{subequations}
    \label{eq:IBVP_Maxwell_help}
    \begin{align}
      \eps(x) \di_t\bfE - \curl\bfH
      &\,=\, 0
      &&\text{in } \Omega_{\sigma,\sigma+\tau} \,, \\
      \mu(x) \di_t\bfH + \curl\bfE
      &\,=\, 0
      &&\text{in } \Omega_{\sigma,\sigma+\tau} \,, \\
      \bfE|_{t=\sigma} \,=\,  \bfPhi \,,\quad
      \bfH|_{t=\sigma}
      &\,=\, 0
      &&\text{in } \Omega \,, \\
      \bfnu\times\bfE|_{(\di\Omega)_{\sigma,\sigma+\tau}}
      &\,=\, 0
      &&\text{on } (\di\Omega)_{\sigma,\sigma+\tau} \,,
    \end{align}
  \end{subequations}
  (see~\cite[Thm.~2 and Cor.~1]{Ant2025}). 
  Next, we consider a smooth approximation
  $\{\bfPhi_k\}_{k\in\N} \in C^\infty_0(\Omega)$ of~$\bfPhi$ with
  $\bfPhi_k\to\bfPhi$ in $\bfL^2(\Omega)$  and denote by
  $(\bfE_k,\bfH_k) \in
  C([\sigma,\sigma+\tau]; \bfH_0(\curl;\Omega)\times\bfH(\curl))
  \cap C^1([\sigma,\sigma+\tau]; \bfL^2(\Omega)\times\bfL^2(\Omega))$
  the associated solution to \eqref{eq:IBVP_Maxwell_help} with
  $\bfPhi_k$ instead of $\bfPhi$ (see~\cite[Thm.~5.3]{KirRie2016}).
  Then, it follows from the continuous dependence of these solutions
  on the data (see~\cite[Thm.~2]{Ant2025}) that
  \begin{equation*}
    \bfE \,=\, \lim_{k\to\infty} \bfE_k \,, \qquad
    \bfH \,=\, \lim_{k\to\infty} \bfH_k \qquad
    \text{in } \bfL^2(\Omega_T) \,.
  \end{equation*}
  Now, using orthogonality with respect to
  $\langle\cdot,\cdot\rangle_\eps$ and the fact that
  $(\widetilde{\bfE}_{\bfj_1}, \widetilde{\bfH}_{\bfj_1})$
  vanishes at $t=\sigma+\tau$ and that $\bfH_k$ vanishes at $t=\sigma$
  for all~$k\in\N$, we find 
  \begin{equation*}
    \begin{split}
      0
      &\,=\, \int_{\Mcal^{(\tau)}} \eps(x)
      \P_\tau[\widetilde{\bfE}_{\bfj_1}|_{t=\sigma}](x)
      \cdot \bfPhi(x) \dx 
      \,=\, \int_{\Mcal^{(\tau)}} \eps(x)
      \widetilde{\bfE}_{\bfj_1}(x,\sigma)
      \cdot \bfPhi(x) \dx \\ 
      &\,=\, \lim_{k\to\infty} \int_{\Mcal^{(\tau)}} \eps(x)
      \widetilde{\bfE}_{\bfj_1}(x,\sigma) \cdot\bfPhi_k(x) \dx \\
      &\,=\, \lim_{k\to\infty} \int_\sigma^{\sigma+\tau} \int_{\Omega}
      \bigl( 
      \eps(x) \di_t (\bfE_k\cdot\widetilde{\bfE}_{\bfj_1})(x,t)
      + \mu(x) \di_t(\bfH_k\cdot\widetilde{\bfH}_{\bfj_1})(x,t)
      \bigr) \dx \dt \,.
    \end{split}
  \end{equation*}
  Using \eqref{eq:adjointIBVP_Maxwell} and
  \eqref{eq:IBVP_Maxwell_help}, and integrating by parts with respect
  to~$x$ together with the homogeneous boundary conditions gives
  \begin{equation*}
    \begin{split}
      0
      &\,=\, \lim_{k\to\infty} \int_\sigma^{\sigma+\tau} \int_{\Omega}
      \bigl( \curl\bfH_k \cdot \widetilde{\bfE}_{\bfj_1}
      + \bfE_k \cdot \curl\widetilde{\bfH}_{\bfj_1}
      - \bfH_k \cdot \curl\widetilde{\bfE}_{\bfj_1}
      - \widetilde{\bfH}_{\bfj_1} \cdot \curl\bfE_k \bigr) \dx \dt \\
      &\phantom{\,=\,}
      + \lim_{k\to\infty} \int_\sigma^{\sigma+\tau} \int_{\Omega}
      \bigl(\eps(x)\Scal_T[\bfj_1]\bigr) \cdot \bfE_k \dx \dt \\
      &\,=\, \int_\sigma^{\sigma+\tau} \int_{\Omega}
      \bigl(\eps(x)\Scal_T[\bfj_1]\bigr) \cdot \bfE \dx \dt \,,
    \end{split}
  \end{equation*}
  where we omitted the arguments $(x,t)$ for better readability. 
  Recalling that $\bfj_1\in\bfL^2(B_{\sigma,\sigma+\tau})$ vanishes outside
  $(\sigma,\sigma+\tau)$, integration by parts with respect to~$t$
  yields 
  \begin{equation}
    \label{eq:Orthogonalityj1ScalsigmaE}
    \begin{split}
      0
      &\,=\, \int_\sigma^{\sigma+\tau} \int_{\Omega}
      \bigl(\eps(x)\Scal_{\sigma+\tau}[\bfj_1]\bigr)
      \cdot \di_t \bigl(\Scal_{\sigma}\bfE\bigr) \dx \dt \\
      &\,=\, - \int_\sigma^{\sigma+\tau} \int_{\Omega}
      \bfj_1(x,t)
      \cdot \bigl(\eps(x)\Scal_{\sigma}[\bfE](x,t)\bigr) \dx \dt \,.
    \end{split}
  \end{equation}
  Since \eqref{eq:Orthogonalityj1ScalsigmaE} holds for all
  $\bfj_1\in\bfL^2(B_{\sigma,\sigma+\tau})$, we have shown that
  $\Scal_{\sigma}[\bfE] = 0$ in~$B_{\sigma,\sigma+\tau}$, which 
  further implies that $\bfE=0$ in $B_{\sigma,\sigma+\tau}$.
  Thus also $\bfH=0$ in $B_{\sigma,\sigma+\tau}$ 
  by~\eqref{eq:IBVP_Maxwell_help}.

  Now we consider an extension of $(\bfE, \bfH)$ to
  $\Omega_{\sigma-\tau,\sigma+\tau}$ as follows, 
  \begin{equation*}
    ( \bfE_{\text{ex}}(x,t), \bfH_{\text{ex}}(x,t) )
    \,=\, 
    \begin{cases}
      ( \bfE(x,t), \bfH(x,t) ) \,,
      &x\in\Omega\,,\; t\in[\sigma,\sigma+\tau) \,, \\
      ( \bfE(x,2\sigma-t), -\bfH(x,2\sigma-t)) \,,
      &x\in\Omega\,,\; t\in(\sigma-\tau,\sigma) \,.
    \end{cases}
  \end{equation*}
  Note that this extension satisfies
  $(\bfE_{\text{ex}}, \bfH_{\text{ex}})
  \in C([\sigma-\tau,\sigma+\tau];\bfL^2(\Omega)^2)\cap H^1([\sigma-\tau, \sigma+\tau]; \bfH_0(\curl;\Omega)^*\times \bfH(\curl;\Omega)^*)$
  and constitutes a weak solution of
  \begin{align*}
    \eps(x) \di_t\bfE_{\text{ex}}
    - \curl\bfH_{\text{ex}}
    &\,=\, 0
    &&\text{in } \Omega_{\sigma-\tau,\sigma+\tau} \,, \\
    \mu(x) \di_t\bfH_{\text{ex}}
    + \curl\bfE_{\text{ex}}
    &\,=\, 0
    &&\text{in } \Omega_{\sigma-\tau,\sigma+\tau} \,, \\
    \bfE_{\text{ex}}|_{t=\sigma} \,=\,  \bfPhi \,,\quad
    \bfH_{\text{ex}}|_{t=\sigma}
    &\,=\, 0
    &&\text{in } \Omega \,, \\
    \bfnu\times\bfE_{\text{ex}}|_{(\di\Omega)_{\sigma,\sigma+\tau}}
    &\,=\, 0
    &&\text{on } (\di\Omega)_{\sigma,\sigma+\tau} \,.
  \end{align*}
  We observe that the vanishing of $\bfH_{\text{ex}}|_{t=\sigma}$ in
  $\Omega$ and the fact that $\bfPhi\in\bfL^2(\div_\eps0;\Mcal^{(\tau)})$
  imply 
  \begin{equation*}
    \div(\eps\bfE_{\text{ex}})
    \,=\, \div(\mu\bfH_{\text{ex}})
    \,=\, 0 \qquad
    \text{in } \Omega_{\sigma-\tau,\sigma+\tau} \,,
  \end{equation*}
  which can be seen similar to~\cite[Pro.~1]{Ant2025}.
  Since
  \begin{equation*}
    \bfE_{\text{ex}}
    \,=\, \bfH_{\text{ex}} 
    \,=\, 0 \qquad
    \text{in } B_{\sigma-\tau,\sigma+\tau} \,,
  \end{equation*}
  invoking a unique continuation principle for Maxwell's
  equations~\cite[Thm.~4.5]{EINT2002} (see also~\cite[Thm.~1.1]{Ell2003}), 
  we conclude that
  \begin{equation*}
    \bfE_{\text{ex}}(x,t) \,=\, \bfH_{\text{ex}}(x,t) \,=\, 0 \qquad
    \text{in } \;
    \bigl\{(x,t) \in \Omega_{\sigma-\tau,\sigma+\tau}
    \;\big|\; \dist(x,\di B) \leq \tau-|t-\sigma| \bigr\} \,.
  \end{equation*}
  In particular, $\bfPhi = \bfE_{\text{ex}}|_{t=\sigma} = 0$
  in~$\Mcal^{(\tau)}$. 
  This contradicts our choice of $\bfPhi$, proving 
  that~$\ran\T_{\sigma,\tau}$ is indeed dense in
  $\bfL^2(\div_\eps0;\Mcal^{(\tau)})$.
  Therefore, the existence of the desired $\bfj_1 \in
  \bfL^2(B^{(1)}_{a,b})$ for
  Lemma~\ref{lmm:radiating_sources_Maxwell_E} is guaranteed.
  
  For the construction of such a source $\bfj_1$, we can rely on a
  minimization argument based on a Tikhonov functional as done in the
  proof of Lemma~\ref{lmm:radiating_sources_acoustic}.
  Without repeating the steps of the proof, we argue that 
  \begin{equation*}
    \bfj_1
    \,:=\, \bigl( \T_{\sigma,\tau}^*\T_{\sigma,\tau}
    + \beta_0 \I \bigr)^{-1} \T_{\sigma,\tau}^*[\bfg]
  \end{equation*}
  is an example for the desired source in
  Lemma~\ref{lmm:radiating_sources_Maxwell_E} for $0\neq\bfg\in
  \bfL^2(\div_\eps0;\Mcal^{(\tau)})$ and $\beta_0>0$ sufficiently small.
  This concludes the proof of Lemma~\ref{lmm:radiating_sources_Maxwell_E}.
\end{proof}

Next we consider the construction of localized waves as in
Theorem~\ref{thm:LocalizedWavefunctions_Maxwell}.   

\begin{corollary}
  \label{cor:LocalizedWavefunctions_Maxwell}
  A sequence of boundary sources
  $\{\bff_k\}_{k\in\N} \subset \Hcal^1_0([0,T]; \HminhalfdivGamma)$
  as in Theorem~\ref{thm:LocalizedWavefunctions_Maxwell} can be
  explicitly constructed as 
  \begin{equation*}
    \bff_k
    \,=\, \frac{1}{\|\,\sqrt[]{\J_k}[\xi]\|^{3/2}} \J_k[\bfxi]
    \quad \text{with} \quad \J_k
    \,:=\,  \bigl(\L^*_{D_{T}}\L_{D_{T}}
    + k^{-1}\I\bigr)^{-1} \,,
    \qquad k\in\N \,,
  \end{equation*}
  where, taking $\beta>0$ small and
  $0< \tau < \min\{b-a, b-\dist(\Omega,\Gamma)\}$, we choose 
  $\sigma=b-\tau$ and
  $0\neq\widetilde\bfg\in \bfL^2(\div_\eps0;\Mcal^{(\tau)})$ to 
  define 
  \begin{equation*}
    \bfxi
    \,:=\, \E^*_{B_{a,b}}
    \Bigl[ 
    \bigl(\T_{\sigma,\tau}^*\T_{\sigma,\tau}
    + \beta \I \bigr)^{-1} \T_{\sigma,\tau}^*[\widetilde\bfg]\Bigr] \,.
  \end{equation*}
  Here the operators $\E_{B_{a,b}}$, $\L_{D_{T}}$, and
  $\T_{\sigma,\tau}$ are defined in \eqref{eq:DefLabEabHab_Maxwell}, 
  and \eqref{eq:final_time_operator_Maxwell}.
\end{corollary}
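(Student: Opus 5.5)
We verify the hypotheses of Lemma~\ref{lmm:Construction} in the appendix, applied with $\Acal_1=\E^*_{B_{a,b}}$ and $\Acal_2=\L^*_{D_T}$. These are the adjoints of the bounded operators $\E_{B_{a,b}}$ and $\L_{D_T}$. Both act between $\Hcal^1_0([0,T];\HminhalfdivGamma)$, identified with its dual via the inner product introduced in Section~\ref{sec:Setting}, and $\bfL^2(B_{a,b})$ with $\langle\cdot,\cdot\rangle_\eps$, respectively $\bfL^2(D_T)^2$ with $\langle\cdot,\cdot\rangle_{\eps,\mu}$. The argument then has two parts: show that $\bfxi$ lies in $\ran\E^*_{B_{a,b}}\setminus\ran\L^*_{D_T}$, and then run the explicit regularized construction from that lemma, exactly as in Corollary~\ref{cor:LocalizedWavefunctions_acoustic}.

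\emph{Step 1: $\bfxi\notin\ran\L^*_{D_T}$.} Set $\bfj_1:=(\T_{\sigma,\tau}^*\T_{\sigma,\tau}+\beta\I)^{-1}\T_{\sigma,\tau}^*[\widetilde\bfg]$ with $\sigma=b-\tau$. Since $\tau<b-a$, we have $\sigma>a$, so $\bfj_1\in\bfL^2(B_{\sigma,b})\subset\bfL^2(B_{a,b})$ and $\bfxi$ is well defined. The proof of Lemma~\ref{lmm:radiating_sources_Maxwell_E} shows that $\ran\T_{\sigma,\tau}$ is dense in $\bfL^2(\div_\eps0;\Mcal^{(\tau)})$. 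The standard Tikhonov argument (\cite[p.~52--55]{Han2017}) then gives $\T_{\sigma,\tau}[\bfj_1]\to\widetilde\bfg\neq0$ as $\beta\to0+$. Hence for $\beta$ small, $\P_\tau[\widetilde{\bfE}_{(\bfj_1,0)}|_{t=\sigma}]\neq0$, so $\widetilde{\bfE}_{(\bfj_1,0)}(\cdot,\sigma)\not\equiv0$ on $\Mcal^{(\tau)}\subset\Omega\setminus\ol B$. By continuity in time, $\widetilde{\bfE}_{(\bfj_1,0)}\not\equiv0$ in $(\Omega\setminus\ol B)\times(b-\delta,b)$ for any $\delta$ slightly larger than $\tau$.

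Choose $\delta>\tau$ such that $b-\delta>\max\{\dist(\Omega,\Gamma),a\}$; this is possible because $\tau<\min\{b-a,b-\dist(\Omega,\Gamma)\}$. Step~I of the proof of Theorem~\ref{thm:LocalizedWavefunctions_Maxwell}, specifically \eqref{eq:conseq_intersection_Maxwell}, shows that $\bfxi\in\ran\L^*_{D_T}$ would force $\widetilde{\bfE}_{(\bfj_1,0)}=0$ in $(\Omega\setminus\ol B)\times(b-\delta,T)$. This contradicts the previous paragraph, so $\bfxi\notin\ran\L^*_{D_T}$. The reductions $\ol B\cap\ol D=\emptyset$ and connectedness of $\Omega\setminus\ol{B\cup D}$ are made exactly as in that proof, by shrinking $B$.

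\emph{Step 2: the construction.} The operator $\J_k=(\L^*_{D_T}\L_{D_T}+k^{-1}\I)^{-1}$ is bounded, self-adjoint and positive definite, so $\sqrt{\J_k}$ exists and $\langle\bfxi,\J_k\bfxi\rangle=\|\sqrt{\J_k}\bfxi\|^2$. Lemma~\ref{lmm:Construction} then yields $\|\E_{B_{a,b}}\bff_k\|\to\infty$ and $\|\L_{D_T}\bff_k\|\to0$. For the reader's orientation, the mechanism is as follows. Since $\bfxi\notin\ran\L^*_{D_T}$, we have $\|\sqrt{\J_k}\bfxi\|\to\infty$. Moreover, $\|\L_{D_T}\J_k\bfxi\|^2\le\|\sqrt{\J_k}\bfxi\|^2$, so $\|\L_{D_T}\bff_k\|\le\|\sqrt{\J_k}\bfxi\|^{-1/2}\to0$. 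Finally, writing $\bfxi=\E^*_{B_{a,b}}\bfj_1$ and using Cauchy--Schwarz,
\[
\|\E_{B_{a,b}}\bff_k\|\ge\frac{\langle\bfxi,\bff_k\rangle}{\|\bfj_1\|}=\frac{\|\sqrt{\J_k}\bfxi\|^{1/2}}{\|\bfj_1\|}\to\infty .
\]
By equivalence of the weighted and standard $\bfL^2$ norms, this gives \eqref{eq:LocWavfun_Maxwell_E}.

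The main obstacle is Step~1. One has to make sure that the regularized source, and not just some abstract one, radiates outside $B$ inside the window $(b-\delta,b)$, and that this window is compatible with the unique continuation region $\dist(x,\Gamma)\le T-|t-T|$ used in Step~I. This is exactly where the constraints on $\tau$, $\sigma$ and $\delta$ enter.
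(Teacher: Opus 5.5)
Your proposal is correct and follows the paper's proof. As there, you apply Lemma~\ref{lmm:Construction} with $\Acal_1=\E^*_{B_{a,b}}$ and $\Acal_2=\L^*_{D_T}$, deduce $\bfxi\notin\ran\L^*_{D_T}$ from the Tikhonov limit $\T_{\sigma,\tau}[\bfj_1]\to\widetilde\bfg\neq0$ combined with Step~I of Theorem~\ref{thm:LocalizedWavefunctions_Maxwell}, and use $\langle\bfxi,\J_k\bfxi\rangle=\|\sqrt{\J_k}\bfxi\|^2$.

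Your explicit choice $\delta\in(\tau,\min\{b-a,b-\dist(\Omega,\Gamma)\})$ is slightly more careful than the paper. The paper implicitly takes $\delta=\tau$, and its proof misstates the bound as $\tau<\min\{b-a,\dist(\Omega,\Gamma)\}$. The shrinking of $B$ that you inherit from the theorem strictly means the formula is applied with the shrunken set in $\E_{B_{a,b}}$ and $\T_{\sigma,\tau}$; the paper passes over this point as well.
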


\begin{proof}
  To construct $\{\bff_k\}_{k\in\N}$, we appeal to 
  Lemma~\ref{lmm:Construction} with the choice 
  $\Acal_1 = \E^*_{B_{a,b}}$, $\Acal_2 = \L^*_{D_{T}}$,
  and $\bfxi = \E^*_{B_{a,b}}\left[\widetilde{\bfj_1}\right]$. 
  For $\sigma=b-\tau$  with $\tau < \min\{b-a,\, \dist(\Omega,\Gamma)\}$, 
  we consider here
  \begin{align*}
    \widetilde{\bfj_1} 
    \,:=\, \bigl(\T_{\sigma,\tau}^*\T_{\sigma,\tau}
    + \beta \I \bigr)^{-1} \T_{\sigma,\tau}^*[\widetilde\bfg]
  \end{align*}
  after fixing some $0\neq\widetilde\bfg\in \bfL^2(\div_\eps0;\Mcal^{(\tau)})$ 
  and $\beta >0$ sufficiently small. 
  The operator $\T_{\sigma,\tau}$ is introduced 
  in~\eqref{eq:final_time_operator_Maxwell}. 
  From our choice of $\widetilde{\bfj_1}$ and the arguments of Step~II 
  of Theorem~\ref{thm:LocalizedWavefunctions_Maxwell}, it is clear that 
  $\bfxi\notin \ran\L^*_{D_{T}}$. 
  Similar to the discussion in 
  Corollary~\ref{cor:LocalizedWavefunctions_acoustic}, we note that 
  $\sqrt{\J_k}$ makes sense since $\J_k$ is positive. 
  Denoting $\bfeta_k = \J_k[\bfxi]$, we see~$\langle \bfxi , \bfeta_k\rangle
  = \langle \bfxi , \J_k[\bfxi]\rangle
  = \| \sqrt{\J_k}[\bfxi] \|^2$ for $k\in\N$. 
  With this observation, an application of Lemma~\ref{lmm:Construction} 
  concludes the proof of Corollary~\ref{cor:LocalizedWavefunctions_Maxwell}.
\end{proof}

\subsection{Localization in time}
\label{subsec:Localization_Time_Maxwell}

Similar to Section~\ref{subsec:Localization_Time_Acoustic}, we 
can also construct a sequence of boundary data for which the
associated solutions to~\eqref{eq:IBVP_Maxwell} concentrates on any
given open set $B\subset\Omega$ at a sufficiently large time. 

\begin{theorem}
  \label{thm:localizedSolution__same_base_Maxwell}
  Suppose, in addition to our previous assumptions, 
  that $\eps,\mu\in C^2(\ol{\Omega})$.
  Consider an open subset $B\Subset \Omega$ such that
  $\Omega\setminus\ol{B}$ is connected, and let $[a,b]$ and $[c,d]$
  be two subintervals of $[0,T]$ with $[a,b]\cap[c,d]=\emptyset$
  such that  
  \begin{itemize}
  \item[(I)] $\dist (\Omega,\Gamma) < |b-d|$ \hspace*{11.8em} if $c<a$\,,
  \item[(II)] $\dist (\Omega,\Gamma) < b$ \; and \;
    $\dist(B,\di B) < |b-c| / 2$ \qquad if $c>a$\,.
  \end{itemize}
  Then, there exist sequences
  $\{\bff_k\}_{k\in\N},\{\widetilde{\bff}_k\}_{k\in\N}
  \subset \Hcal^1_0([0,T]; \HminhalfdivGamma)$ such that
  \begin{align*}
    \|\bfE_{\bff_k}\|_{\bfL^2(B_{a,b})} \to \infty
    \qquad \text{and} \qquad
    \|\bfE_{\bff_k}\|_{L^2(B_{c,d})}
    + \|\bfH_{\bff_k}\|_{L^2(B_{c,d})} \to 0
    \qquad \text{as } k\to\infty \,, \\
    \|\bfH_{\widetilde{\bff}_k}\|_{\bfL^2(B_{a,b})}\to \infty
    \qquad \text{and} \qquad
    \|\bfE_{\widetilde{\bff}_k}\|_{\bfL^2(B_{c,d})}
    + \|\bfH_{\widetilde{\bff}_k}\|_{\bfL^2(B_{c,d})} \to 0
    \qquad \text{as } k\to\infty \,.
  \end{align*}
  Here, $(\bfE_{\bff_k},\bfH_{\bff_k})$ and
  $(\bfE_{\widetilde{\bff}_k},\bfH_{\widetilde{\bff}_k})$ denote the
  solutions to \eqref{eq:IBVP_Maxwell} for $\bff=\bff_k$ and
  $\bff=\widetilde\bff_k$, respectively. 
\end{theorem}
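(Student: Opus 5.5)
We mirror the proof of Theorem~\ref{thm:localizedSolution__same_base_acoustic} and treat the two cases separately. We discuss only $\{\bff_k\}_{k\in\N}$, using $\E_{B_{a,b}}$. For $\{\widetilde{\bff}_k\}_{k\in\N}$ one replaces $\E_{B_{a,b}}$ by $\H_{B_{a,b}}$ and the source $(\bfj_1,0)$ by $(0,\bfj_2)$, with the obvious $\mu$-analogue of Lemma~\ref{lmm:radiating_sources_Maxwell_E}.

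\emph{Case I ($c<a$, hence $d<a$).} It suffices to find a single $\bff\in\Hcal^1_0([0,T];\HminhalfdivGamma)$ supported in $[d,T]$ with $\bfE_\bff\not\equiv0$ in $B_{a,b}$; then $\bff_k:=k\bff$ works, because $\bfE_{\bff_k}=\bfH_{\bff_k}=0$ in $\Omega_d$. After a time translation by $d$, this reduces to a Maxwell analogue of Lemma~\ref{lmm:radiating_boundary_sources_acoustic} with $\tau=b-d>\dist(\Omega,\Gamma)$. Concretely, we need density of the range of $\bff\mapsto\P[\bfE_\bff|_{t=\tau}]$ in the divergence-free subspace $\bfL^2(\div_\eps0;B)$ of $\bfL^2(B)$. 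Here $\P$ denotes the $\langle\cdot,\cdot\rangle_\eps$-orthogonal projection onto $\bfL^2(\div_\eps0;B)$; the projection is needed because of Proposition~\ref{pro:WellPosedness_IBVP_Maxwell}.

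We prove the density by the duality argument of Lemma~\ref{lmm:radiating_sources_Maxwell_E}. Take $\bfPhi\in\bfL^2(\div_\eps0;B)$ orthogonal to the range. Solve the homogeneous Maxwell system backward from $t=\tau$ with data $(\bfPhi,0)$ and perfectly conducting boundary, and integrate by parts against $(\bfE_\bff,\bfH_\bff)$. This shows that the tangential trace $\pi_\tau$ of the magnetic field of this backward solution vanishes on $\Gamma_\tau$. Extend the backward solution oddly in $\bfH$ across $t=\tau$, as in Lemma~\ref{lmm:radiating_sources_Maxwell_E}. This produces a solution on $(0,2\tau)$ whose Cauchy data vanish on $\Gamma_{2\tau}$ and which satisfies $\div(\eps\bfE)=\div(\mu\bfH)=0$. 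The global unique continuation of \cite{EINT2002,Ell2003} then gives $\bfPhi=0$ on $\{x:\dist(x,\Gamma)<\tau\}\supset\Omega$. Finally, a Tikhonov minimizer $(\P_\tau^*\P_\tau+\beta\I)^{-1}\P_\tau^*[\bfg]$ with $0\neq\bfg$ yields a constructive $\bff$, and continuity in time gives $\bfE_\bff\not\equiv0$ in $B_{a,b}$.

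\emph{Case II ($a<c$, hence $b<c$).} We show $\ran\E^*_{B_{a,b}}\cap\ran\L^*_{B_{c,d}}=\{0\}$ using the characterization in Lemma~\ref{lmm:AdjointLabEabHab_Maxwell}. We then apply Lemma~\ref{lmm:Construction} with a nonzero $\bfxi\in\ran\E^*_{B_{a,b}}$, which is produced through Lemma~\ref{lmm:radiating_sources_Maxwell_E} exactly as in Step~II of Theorem~\ref{thm:LocalizedWavefunctions_Maxwell}.

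Suppose $\bfh$ lies in the intersection, generated by $(\bfj_1,0)$ and $\bfK=(\bfk_1,\bfk_2)$. The steps are as follows.
\begin{enumerate}[label=(\roman*)]
\item Both adjoint fields vanish in $\Omega_{d,T}$.
\item Since $\bfh$ is a time integral, its vanishing gives $\pi_\tau$ of the difference of the magnetic fields $=0$ on $\Gamma_T$.
\item Zero extension past $t=d$ together with unique continuation for Maxwell (using $\dist(\Omega,\Gamma)<b$) shows that the two fields coincide in $(\Omega\setminus\ol B)_{b,d}$.
\item The field generated by $(\bfj_1,0)$ vanishes on $\Omega_{b,d}$, because its source vanishes there. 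Hence the field generated by $\bfK$ vanishes in $(\Omega\setminus\ol B)_{b,c}$, where it is source-free.
\item Local unique continuation across $\di B$ over the time window $(b,c)$, with $\dist(B,\di B)<(c-b)/2$, forces the $\bfK$-field to vanish at $t=(b+c)/2$ on all of $\Omega$. The divergence conditions needed for \cite{EINT2002} hold there since no sources act in $(b,c)$; Remark~\ref{rem:InvariantDefinitions} lets us assume divergence-free sources.
\item Backward uniqueness on $(0,(b+c)/2)$, again source-free, gives $\bfh=0$.
\end{enumerate}

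The main obstacle is the Maxwell unique continuation steps. The results of \cite{EINT2002,Ell2003} require the divergence-free constraints and enough regularity of the extended fields, so we must check carefully that the zero and odd extensions are weak solutions with $\div(\eps\bfE)=\div(\mu\bfH)=0$ on the relevant time windows. In particular, in step (v) of Case~II the source-free interval must be confirmed to be $(b,c)$ for both fields.
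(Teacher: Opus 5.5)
Your Case~I and steps (i)--(iv) of Case~II match the paper. Step (v), and with it (vi), rests on a false premise. In Lemma~\ref{lmm:AdjointLabEabHab_Maxwell} the adjoint fields are driven by $\eps\Scal_T[\bfk_1]$ and $\mu\Scal_T[\bfk_2]$, not by $\bfK$ itself. For $t<c$ these sources equal $-\eps\int_c^d\bfk_1\ds$ and $-\mu\int_c^d\bfk_2\ds$, which are constant in time and supported in $B$ but in general nonzero. So the $\bfK$-field is not source-free on $(b,c)$ or on $(0,\frac{b+c}{2})$, and unique continuation for the homogeneous Maxwell system cannot be applied to it.

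In fact the intermediate conclusion of (v) is false. Take $\bfPsi_1,\bfPsi_2\in C_c^\infty(B;\R^3)$ with $\curl\bfPsi_1\neq0$, and $\theta\in C^\infty(\R)$ with $\theta=1$ on $(-\infty,c]$ and $\theta=0$ on $[d,\infty)$. Then the pair $(\widetilde\bfE,\widetilde\bfH):=\theta(t)\,(\eps^{-1}\curl\bfPsi_1,\mu^{-1}\curl\bfPsi_2)$ solves \eqref{eq:adjointIBVP_Maxwell} with $\bfk_1:=\di_t(\di_t\widetilde\bfE-\eps^{-1}\curl\widetilde\bfH)$ and $\bfk_2:=\di_t(\di_t\widetilde\bfH+\mu^{-1}\curl\widetilde\bfE)$. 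These lie in $\bfL^2(B_{c,d})$ and even satisfy $\div(\eps\bfk_1)=\div(\mu\bfk_2)=0$. The field vanishes on $(\Omega\setminus\ol B)\times(0,T)$, so with $\bfj_1=0$ everything in (i)--(iv) holds. Yet at $t=\frac{b+c}{2}$ it equals the nonzero static pair $(\eps^{-1}\curl\bfPsi_1,\mu^{-1}\curl\bfPsi_2)$.

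The missing idea is the time differentiation used in the paper. Because the sources are time-independent on $(0,c)$, the pair $(\di_t\widetilde\bfE_\bfK,\di_t\widetilde\bfH_\bfK)$ solves the homogeneous Maxwell system there and vanishes on $(\Omega\setminus\ol B)\times(b,c)$. It also satisfies $\div(\eps\di_t\widetilde\bfE_\bfK)=\div(\mu\di_t\widetilde\bfH_\bfK)=0$ once $\div(\eps\bfk_1)=\div(\mu\bfk_2)=0$. This is where Remark~\ref{rem:InvariantDefinitions} enters; alternatively, a second time derivative is divergence free on $(0,c)$ without any reduction.

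The argument then runs as follows.
\begin{enumerate}
\item Mollify in time to obtain the regularity required by \cite{EINT2002}.
\item Unique continuation across $\di B$, together with $\dist(B,\di B)<\frac{c-b}{2}$, shows that these time derivatives vanish at $t=\frac{b+c}{2}$.
\item Well-posedness of the homogeneous problem then gives that they vanish on $\Omega\times(0,\frac{b+c}{2})$.
\item Integrate in time from $t=\frac{b+c}{2}$, where the fields vanish on $\Omega\setminus\ol B$.
\end{enumerate}
This yields $\widetilde\bfE_\bfK=\widetilde\bfH_\bfK=0$ only on $(\Omega\setminus\ol B)\times(0,\frac{b+c}{2})$, not on all of $\Omega$, as the example shows. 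That is exactly what you need. It gives $\pi_\tau[\widetilde\bfH_\bfK]=0$ on $\Gamma_b$, and together with the vanishing on $(\Omega\setminus\ol B)\times(b,T)$ from (i)--(iv) this yields $\bfh=0$.
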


\begin{proof}
  We divide the details of proof of
  Theorem~\ref{thm:localizedSolution__same_base_Maxwell} in two
  cases.
  Our proof relies on an argument similar to that of
  Theorem~\ref{thm:localizedSolution__same_base_acoustic}.
  Also, we only discuss the existence and construction of $\bff_k$
  and choose to avoid the corresponding discussion of
  $\widetilde{\bff}_k$.
  The latter only requires replacing~$\E^*_{{B}_{a,b}}$ by
  $\H^*_{{B}_{a,b}}$ and adjusting 
  Lemma~\ref{lmm:radiating_boundary_sources_Maxwell} appropriately 
  in the following argument. 

  \emph{Case I: ($c<a$)} \;
  The intervals $[a,b]$ and $[c,d]$ being disjoint, we see
  that~$d<a$. 
  The discussion beyond $t=b$ being immaterial to us, it suffices
  to construct boundary sources
  ${\bff_k\in  \Hcal^1_0([0,b];\HminhalfdivGamma)}$.
  From Lemma~\ref{lmm:radiating_boundary_sources_Maxwell} below 
  with $\tau=b-d$, we obtain
  $\widetilde{\bff}\in \Hcal^1_0([0,b-d];\HminhalfdivGamma)$,
  such that $\bfE_{\widetilde{\bff}}|_{t=b-d} \not\equiv 0$ in $B$. 
  Here, we have utilized our assumption 
  that~${\dist(\Omega,\Gamma)<|b-d|}$.
  Defining $\bff := \Tcal_d[\widetilde\bff]$ by time-translation as
  in \eqref{eq:time-reversal_and_translation}, and extending this
  function by zero to~$[0,T]$,
  gives~$\bff\in \Hcal^1_0([0,T];\HminhalfdivGamma)$
  such that the solution
  $(\bfE_{\bff},\bfH_{\bff})$ to the
  IBVP~\eqref{eq:IBVP_Maxwell}
  satisfies~$\bfE_{\bff}(x,t) = \bfH_{\bff}(x,t)=0$
  for~$(x,t)\in \Omega_d$ and 
  $\bfE_{\bff}|_{t=b} \not\equiv 0$ in $B$.
  Since~$\bfE_{\bff} \in C([0,b]; \bfL^2(\Omega))$, we
  have
  $\|\bfE_{\bff}\|_{\bfL^2(B_{a,b})} \neq 0$.
  This implies that the sequence of boundary data defined by
  $\bff_k = k \bff$ for~$k\in\N$ validates
  Theorem~\ref{thm:localizedSolution__same_base_Maxwell}.
  
  \emph{Case II: ($a<c$)} \;
  From our assumption that $[a,b]\cap[c,d]=\emptyset$, we also
  have~$b<c$. 
  In view of Lemma~\ref{lmm:Construction}, it suffices to show
  \begin{equation}
    \label{eq:non-inclusion_same_base_Maxwell}
    \ran \E^*_{{B}_{a,b}} \cap \ran\L^*_{{B}_{c,d}} \,=\, \{0\} 
  \end{equation}
  in order to prove
  Theorem~\ref{thm:localizedSolution__same_base_Maxwell}. 
  As before, we use a contrapositive argument to establish
  \eqref{eq:non-inclusion_same_base_Maxwell}. 

  Let us assume
  $\bfh \in \ran \E^*_{{B}_{a,b}}\cap \ran\L^*_{{B}_{c,d}}$ implying 
  \begin{equation*}
    \bfh
    \,=\, - \Scal_0\bigl[ \pi_\tau[\widetilde\bfH_{(\bfj_1,0)}]
    \big|_{\Gamma_T} \big]
    \,=\, - \Scal_0\bigl[ \pi_\tau[\widetilde\bfH_\bfK]
    \big|_{\Gamma_T} \big] \,,
  \end{equation*}
  where  $(\widetilde{\bfE}_{(\bfj_1,0)},\widetilde{\bfH}_{(\bfj_1,0)})$
  and $(\widetilde{\bfE}_\bfK,\widetilde{\bfH}_{\bfK})$
  solve the IBVP~\eqref{eq:adjointIBVP_Maxwell} with source terms
  $(\eps(x)\Scal_T[\bfj_1], 0)$
  and~$(\eps(x)\Scal_T[\bfk_1],\mu(x)\Scal_T[\bfk_2])$ for some
  $\bfj_1\in \bfL^2(B_{a,b})$ and
  $\bfK=(\bfk_1,\bfk_2)\in\bfL^2(B_{c,d})^2$, respectively.
  As we have seen in Remark~\ref{rem:InvariantDefinitions}, we can
  assume without loss of generality that
  ${\div(\eps\bfk_1)=\div(\mu\bfk_2)=0}$.
  Since~$\bfj_1=0$ and $\bfK=(0,0)$ in $B_{d,T}$, we obtain
  $(\widetilde{\bfE}_{(\bfj_1,0)},\widetilde{\bfH}_{(\bfj_1,0)}) = 
  (\widetilde{\bfE}_\bfK,\widetilde{\bfH}_{\bfK}) = (0, 0)$ in
  $\Omega_{d,T}$. 
  
  We denote $\widetilde\Omega:=\Omega\setminus\overline B$ and define
  $\bfE:=\widetilde\bfE_{(\bfj_1,0)}-\widetilde\bfE_\bfK$ and
  $\bfH:=\widetilde\bfH_{(\bfj_1,0)}-\widetilde\bfH_\bfK$ in
  $\Omega_d$. 
  Extending~$(\bfE,\bfH)$ from $\Omega_d$ to $\Omega_{2d}$ by zero
  and denoting this extension again by $(\bfE,\bfH)$, we see that
  $(\bfE,\bfH) \in C([0,2d];\bfH_0(\curl;\Omega)\times\bfH(\curl;\Omega))
  \cap C^1([0,2d]; \bfL^2(\Omega)^2)$ due to 
  Lemma~\ref{lmm:AdjointLabEabHab_Maxwell}. 
  Note that $(\bfE,\bfH)$ satisfies
  \begin{align*}
    \eps(x) \di_t\bfE
    - \curl\bfH
    &\,=\, 0
    &&\text{in } \widetilde\Omega_{2d} \,, \\
    \mu(x) \di_t\bfH + \curl\bfE
    &= 0
    &&\text{in } \widetilde\Omega_{2d} \,, \\
    \bfE|_{t=d} \,=\, \bfH|_{t=d}
    &\,=\, 0
    &&\text{in } \widetilde\Omega \,, \\
    \nu\times\bfE|_{\Gamma_{2d}}
    \,=\, \nu\times\bfH|_{\Gamma_{2d}}
    &\,=\, 0
    &&\text{on } \Gamma_{2d} \,,    
  \end{align*}
  and thus also
  \begin{equation*}
    \div(\eps(x)\bfE)
    \,=\, \div(\mu(x)\bfH)
    \,=\, 0 \qquad
    \text{in } \widetilde\Omega_{2d} \,.
  \end{equation*}
  In view of unique continuation for Maxwell's equations~\cite[Thm.~4.5]{EINT2002}
  (see also~\cite[Thm.~1.1]{Ell2003}), we can
  conclude that 
  \begin{equation*}
    \bfE \,=\, \bfH \,=\, 0 \qquad
    \text{in } \; \bigl\{ (x,t)
    \in \widetilde\Omega_{2d} \;\big|\;
    \dist(x,\Gamma)\leq d-|t-d| \bigr\} \,,
  \end{equation*}
  which in consideration of our assumption $\dist(\Omega,\Gamma)<b$
  implies that 
  \begin{equation*}
    \bfE(x,t) \,=\, \bfH(x,t)  \,=\, 0 \qquad
    \text{for } (x,t) \in \widetilde\Omega_{b,d} \,.
  \end{equation*}
  Since $\bfj_1(\cdot,t)=0$ for almost every $t\in(b,T)$, we also
  have
  $(\widetilde{\bfE}_{(\bfj_1,0)},\widetilde{\bfH}_{(\bfj_1,0)}) = (0,0)$
  in $\Omega_{b,d}$, 
  which implies $\widetilde\bfE_\bfK=\widetilde\bfH_\bfK=0$ in
  $\widetilde\Omega_{b,d}$.
  To see that $\bfh=0$ on $\Gamma_T$, it is therefore enough to
  prove~$\pi_\tau[\widetilde\bfH_\bfK] = 0$ on $\Gamma_b$. 
  This is what we show next. 

  Let us start with the observation that
  \begin{align*}
    \eps(x) \di_t\widetilde\bfE_\bfK
    - \curl\widetilde\bfH_\bfK
    &\,=\, - \int_c^T \eps(x) \bfk_1(x,s)  \ds
    &&\text{in } \Omega_{b,c} \,, \\
    \mu(x) \di_t\widetilde\bfH_\bfK
    + \curl\widetilde\bfE_\bfK
    &\,=\, - \int_c^T \mu(x)\bfk_2(x,s) \ds
    &&\text{in } \Omega_{b,c} \,.
  \end{align*}
  In particular the source terms do not depend on time. 
  Accordingly, we define
  \begin{equation*}
    \widetilde\bfE_\bfK'(x,t)
    \,:=\, \di_t\widetilde\bfE_\bfK(x,t)
    \qquad\text{and}\qquad 
    \widetilde\bfH_\bfK' (x,t)
    \,:=\, \di_t\widetilde\bfH_\bfK(x,t) \,,
  \end{equation*}
  and we notice that
  $\div(\eps(x)\widetilde\bfE_\bfK')=\div(\mu(x)\widetilde\bfH_\bfK')=0$
  in~$\Omega_{b,c}$ and $\widetilde\bfE_\bfK'=\widetilde\bfH_\bfK'=0$
  in $\widetilde\Omega_{b,c}$. 
  Moreover,~$(\widetilde\bfE_\bfK',\widetilde\bfH_\bfK')\in C([0,T]; \bfL^2(\Omega)^2)$.  
  We would like to invoke the unique continuation result from \cite{EINT2002,Ell2003}, 
  which, however, requires more regularity. 
  For this, one can mollify $\widetilde\bfE_\bfK'$ 
  and~$\widetilde\bfH_\bfK'$ in time. 
  Although this is standard, we briefly discuss this here for completeness. 
  Taking $\gamma \in C^\infty_c(\R)$ with $\supp\gamma\subset [-1,1]$ 
  and $\int_\R \gamma \dt = 1$, let us consider 
  \begin{align*}
    \widetilde\bfE'_{\bfK,n} 
    \,=\, \gamma_n \ast_t \bfE_\bfK' \,, \quad 
    \widetilde\bfH'_{\bfK,n} 
    \,=\, \gamma_n \ast_t \bfH_\bfK'  \qquad 
    \text{where } \; \gamma_n(t) \,=\, n \gamma(nt) \; 
    \text{ for } \; t\in\R \,,\; n\in\N \,,
  \end{align*}
  and $\ast_t$ denotes a convolution in time.
  Taking $n\in\N$ large enough, we may assume that
  $\dist(B,\di B)<|c_n-b_n|/2$ where $b_n:=b+\frac{1}{n}$ and 
  $c_n:=c-\frac{1}{n}$. 
  In fact, we note that 
  $\widetilde\bfE'_{\bfK,n}=\widetilde\bfH'_{\bfK,n}=0$ in 
  $\widetilde\Omega_{b,d}$, and 
  $(\widetilde\bfE'_{\bfK,n},\widetilde\bfH'_{\bfK,n}) 
  \in \bfH^2(\Omega_{b_n,c_n})$ satisfies the homogeneous 
  Maxwell equations 
  \begin{align*}
    \eps(x)\di_t\widetilde\bfE'_{\bfK,n} 
    - \curl\,\widetilde\bfH'_{\bfK,n}
    &\,=\, 0 \qquad
      \text{in } \Omega_{b,c} \,, \\
    \mu(x) \di_t\widetilde\bfH'_{\bfK,n}
    + \curl\,\widetilde\bfE'_{\bfK,n}
    &\,=\, 0 \qquad
      \text{in } \Omega_{b,c} \,.
  \end{align*}
  The $\bfH^2$-regularity of $\widetilde\bfE'_{\bfK,n}$ (and 
  also~$\widetilde\bfH'_{\bfK,n}$) is achieved by employing 
  elliptic regularity results since 
  ${\curl\curl\widetilde\bfE'_{\bfK,n}(\cdot,t) \in \bfL^2(\Omega)}$ and 
  $\div\widetilde\bfE'_{\bfK,n}(\cdot,t) \in \bfL^2(\Omega)$
  for all $t\in[b_n,c_n]$. 
  For the latter we use the $C^2$-regularity of $\eps$ along with 
  $\div(\eps(x)\widetilde\bfE'_{\bfK,n}) = 0$ in $\Omega_{b_n,c_n}$. 
  Recalling that~$\widetilde\bfE'_{\bfK,n} = \widetilde\bfH'_{\bfK,n} = 0$ in 
  $\widetilde\Omega_{b,d}$, 
  we appeal to the unique continuation
  principle~\cite[Thm.~4.5]{EINT2002} applied 
  to~$(\bfE'_{\bfK,n},\bfH'_{\bfK,n})$ (see also~\cite[Thm.1.1]{Ell2003}) 
  to see that 
  \begin{equation*}
    \bfE'_{\bfK,n} = \bfH'_{\bfK,n} \,=\, 0 \qquad
    \text{in } \; \Bigl\{ (x,t) \in B_{b,c}
    \;\Big|\; \dist(x,\di B) < \frac{c-b}{2}- \frac{1}{n}  
    + \Bigl| t - \frac{c+b}{2}\Bigr| \Bigr\} \,.
  \end{equation*}
  Since $(\bfE'_{\bfK,n},\bfH'_{\bfK,n}) \to (\bfE'_\bfK,\bfH'_\bfK)$ 
  in $\bfL^2(\Omega_{b,c})^2$ as $n\to\infty$, we have 
  $\widetilde\bfE_\bfK' = \widetilde\bfH_\bfK' = 0$ a.e.\ 
  in~$B\times\{\frac{b+c}{2}\}$. 
  Combining this with 
  $\widetilde\bfE_\bfK'=\widetilde\bfH_\bfK'=0$
  in $\widetilde\Omega_{b,c}$, 
  we therefore have that
  $\widetilde\bfE_\bfK'=\widetilde\bfH_\bfK'=0$
  in~$\Omega_{\frac{b+c}{2}}$ from the well-posedness of IBVP 
  satisfied by $(\widetilde\bfE_\bfK', \widetilde\bfH_\bfK')$
  in~$\Omega_{\frac{b+c}{2}}$.
  Since $\widetilde\bfE_\bfK=\widetilde\bfH_\bfK=0$ on 
  $\widetilde\Omega\times\{\frac{b+c}{2}\}$ as well, we obtain by
  integrating with respect to time that
  $\widetilde\bfE_\bfK=\widetilde\bfH_\bfK=0$
  in~$\Omega_{\frac{b+c}{2}}$.
  This further gives $\pi_\tau[\widetilde\bfH_\bfK] = 0$
  on~$\Gamma_b$, and the proof of
  \eqref{eq:non-inclusion_same_base_Maxwell} is complete.  

  For the construction of $\bff_k$, we appeal to 
  Lemma~\ref{lmm:Construction} with a choice of 
  $0\neq \bfxi \in \ran\E^*_{B_{a,b}}$. 
  In view of the arguments presented above, we note that 
  $\bfxi\notin\ran\L^*_{B_{c,d}}$. 
  In order to choose such a~$\bfxi$, we now refer to 
  Lemma~\ref{lmm:radiating_sources_Maxwell_E} after fixing 
  some $0\neq\widetilde\bfg\in\bfL^2(\div_\eps0;\Mcal^{(\tau)})$. 
  Taking $\sigma=b-\tau$ and 
  $\tau < \min\{b-a,\, \dist(B,\partial\Omega)\}$ in 
  Lemma~\ref{lmm:radiating_sources_Maxwell_E}, we consider 
  $\widetilde{\bfj_1} \in L^2(B_{a,b})$ defined by
  \begin{equation*}
    \widetilde{\bfj_1} 
    \,:=\, \bigl(\T_{\sigma,\tau}^*\T_{\sigma,\tau}
    + \beta_0 \I \bigr)^{-1} \T_{\sigma,\tau}^*[\widetilde\bfg] \,.
  \end{equation*}
  In consideration of such $\widetilde{\bfj_1}$, the preceding 
  arguments also yield that the choice 
  $\bfxi := \L^*_{B_{a,b}}[\widetilde{\bfj_1}]$ satisfies 
  $\bfxi \neq 0$. 
  Now we consider
  \begin{equation*}
    \bff_k
    \,:=\, \frac{1}{\|\,\sqrt[]{\J_k}[\bfxi]\|^{3/2}} \J_k[\bfxi]
    \quad \text{with} \quad \J_k
    \,:=\,  \bigl(\L^*_{B_{c,d}}\L_{B_{c,d}}
    + k^{-1}\I\bigr)^{-1} \,,
    \qquad k\in\N \,,
  \end{equation*}
  Once again, we denote $\bfeta_k := \J_k[\bfxi]$ and then use the 
  positivity of $\J_k$ to make sense of $\sqrt{\J_k}$ while 
  applying Lemma~\ref{lmm:Construction}. 
  Here, we have also used the 
  relation~$\langle \bfxi , \bfeta_k\rangle
  = \langle \bfxi , \J_k[\bfxi]\rangle
  = \| \sqrt{\J_k}[\bfxi] \|^2$ for~$k\in\N$. 
  Therefore, the construction of $\bff_k$ as required in 
  Case~II is complete.
\end{proof}

In Case~I of the proof of
Theorem~\ref{thm:localizedSolution__same_base_Maxwell}, we have used
the following auxiliary result.

\begin{lemma}
  \label{lmm:radiating_boundary_sources_Maxwell}
  For $\tau>\dist(\Omega,\Gamma)$ and $B\subseteq \Omega$ open, we can
  construct
  $\bff\in \Hcal^1_0([0,\tau];\HminhalfdivGamma)$
  such that $\bfE_{\bff}|_{t=\tau} \not\equiv 0$ in $B$, where
  $(\bfE_{\bff},\bfH_{\bff})$ denotes the solution 
  to~\eqref{eq:IBVP_Maxwell}.
\end{lemma}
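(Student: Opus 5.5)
We argue as for Lemma~\ref{lmm:radiating_boundary_sources_acoustic}: we show that a final-time operator has dense range and then take the Tikhonov minimizer. Define
\begin{equation*}
  \P^{\bfE}_\tau:\, \Hcal^1_0([0,\tau];\HminhalfdivGamma) \to \bfL^2(\div_\eps0;\Omega) \,,
  \quad \bff \mapsto \bfE_\bff|_{t=\tau} \,,
\end{equation*}
where $\bfL^2(\div_\eps0;\Omega)$ is defined as in \eqref{eq:DefL2tildeMtau} with $\Omega$ in place of $\Mcal^{(\tau)}$. This operator is well defined and bounded. Indeed, Proposition~\ref{pro:WellPosedness_IBVP_Maxwell} gives continuity into $C([0,\tau];\bfL^2(\Omega))$ and also $\div(\eps\bfE_\bff(\tau))=0$.

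\textbf{Density of the range.} The main step is to show that $\ran\P^{\bfE}_\tau$ is dense. Suppose $\bfPhi\in\bfL^2(\div_\eps0;\Omega)$ is $\langle\cdot,\cdot\rangle_\eps$-orthogonal to the range. Let $(\bfE,\bfH)$ solve the homogeneous Maxwell system backwards in time on $\Omega_\tau$ with final data $\bfE|_{t=\tau}=\bfPhi$, $\bfH|_{t=\tau}=0$ and perfectly conducting boundary condition $\bfnu\times\bfE=0$ on $(\di\Omega)_\tau$.

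First we take smooth $\bff$ and mollify $\bfPhi$ as in the proof of Lemma~\ref{lmm:radiating_sources_Maxwell_E}. We then integrate $\di_t\int_\Omega(\eps\bfE_\bff\cdot\bfE+\mu\bfH_\bff\cdot\bfH)\dx$ over $(0,\tau)$ and use the two Maxwell systems together with integration by parts in $x$. The initial values of $(\bfE_\bff,\bfH_\bff)$ and the final value of $\bfH$ vanish, so only the boundary term survives:
\begin{equation*}
  0 \,=\, \int_\Omega \eps\,\bfE_\bff(\tau)\cdot\bfPhi\dx
  \,=\, \pm\int_0^\tau\bigl\langle \bff(t),\pi_\tau[\bfH(t)]\bigr\rangle\dt
  \qquad \text{for all admissible } \bff \,.
\end{equation*}
Since $\bff$ ranges over a dense subset of $\Hcal^1_0([0,\tau];\HminhalfdivGamma)$, for instance $\bff=\di_t^{-1}$ of test functions, this gives $\pi_\tau[\bfH]=0$ on $\Gamma_\tau$. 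Together with $\bfnu\times\bfE=0$, the full tangential Cauchy data vanish on $\Gamma_\tau$. Making this rigorous requires a regularization in time, as in the Case~II argument of Theorem~\ref{thm:localizedSolution__same_base_Maxwell}.

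\textbf{Reflection and unique continuation.} We use the odd reflection from Lemma~\ref{lmm:radiating_sources_Maxwell_E}: extend $(\bfE,\bfH)$ to $(\tau,2\tau)$ by $(\bfE(x,2\tau-t),-\bfH(x,2\tau-t))$. Because $\bfH(\tau)=0$, the extension is a weak Maxwell solution on $\Omega_{2\tau}$. It has vanishing tangential Cauchy data on $\Gamma_{2\tau}$, and $\div(\eps\bfE)=\div(\mu\bfH)=0$ holds because $\bfPhi\in\bfL^2(\div_\eps0;\Omega)$.

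The global unique continuation principle \cite[Thm.~4.5]{EINT2002}, \cite[Cor.~1.2]{Ell2003} then yields $\bfE=\bfH=0$ on $\{\dist(x,\Gamma)\le\tau-|t-\tau|\}$. Since $\tau>\dist(\Omega,\Gamma)$, this set contains all of $\Omega\times\{\tau\}$, so $\bfPhi=0$. The symmetric time window of length $2\tau$ produced by the reflection is exactly what makes $\tau$ sufficient here, instead of the $2\tau$ usually required for approximate controllability.

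\textbf{Construction.} Choose $0\neq\bfg\in\bfL^2(\div_\eps0;\Omega)$ supported in $B$. Such a $\bfg$ exists, for example $\eps^{-1}\curl$ of a compactly supported smooth field in $B$. Set
\begin{equation*}
  \bff \,:=\, \bigl((\P^{\bfE}_\tau)^*\P^{\bfE}_\tau+\beta\I\bigr)^{-1}(\P^{\bfE}_\tau)^*[\bfg] \,.
\end{equation*}
Density of the range gives $\P^{\bfE}_\tau\bff\to\bfg$ as $\beta\to0$ (cf.~\cite[p.~52--55]{Han2017}). Hence for small $\beta>0$ we have $\bfE_\bff|_{t=\tau}\not\equiv0$ in $B$.

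\textbf{Main obstacle.} The hardest step is justifying the duality identity and the unique continuation at the low regularity available. The backward solution with $\bfL^2$ data only has a tangential trace of $\bfH$ in a weak sense, and \cite{EINT2002} needs more smoothness. I would handle this by mollifying in time and using elliptic regularity, as done in the Case~II argument above.
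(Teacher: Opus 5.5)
Your route is the paper's: density of $\P_\tau:\bff\mapsto\bfE_\bff|_{t=\tau}$ in $\bfL^2(\div_\eps0;\Omega)$ via duality and unique continuation, then Tikhonov regularization toward $\eps^{-1}\curl\bfPsi$ with $\bfPsi\in C_c^\infty(B)$. The paper only asserts this density. The gap is in the unique continuation step you supply.

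\textbf{The gap.} Vanishing of $\bfnu\times\bfE$ and $\bfnu\times\bfH$ on $\Gamma$, together with $\div(\eps\bfE)=\div(\mu\bfH)=0$ in $\Omega$, does not amount to vanishing Cauchy data. The equations only give $\di_t(\bfnu\cdot\eps\bfE)=\bfnu\cdot\curl\bfH=0$ and $\di_t(\bfnu\cdot\mu\bfH)=-\bfnu\cdot\curl\bfE=0$ on $\Gamma$, since these are surface curls of vanishing tangential traces. Hence $\bfnu\cdot\eps\bfE=\bfnu\cdot\eps\bfPhi$ on $\Gamma$ for all $t$, and this need not be zero. In Theorem~\ref{thm:LocalizedWavefunctions_Maxwell} the same issue is harmless, because the fields vanish for $t\ge T$; your backward solution vanishes at no time.

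In fact the density you claim is false when $\di\Omega$ is disconnected. Take $\Omega=\{1<|x|<2\}$, $\eps=\mu=1$ and $u=2-2/|x|$. Then $\nabla u\in\bfH_0(\curl;\Omega)$, $\curl\nabla u=0$ and $\div\nabla u=0$. Testing the first equation of \eqref{eq:WeakFormulation_IBVP_Maxwell} with $\nabla u$ shows that $\int_\Omega\bfE_\bff(t)\cdot\nabla u\dx$ is constant in $t$, hence zero, for every $\bff$. So $0\neq\nabla u\perp\ran\P_\tau$. Its backward solution $(\nabla u,0)$ is static, divergence free, and has vanishing tangential traces on all of $\di\Omega$. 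It therefore contradicts unique continuation in the form you invoke it.

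\textbf{The repair.} Run your reflection and unique continuation argument on $(\di_t\bfE,\di_t\bfH)$, after the time regularization you already plan. These fields solve the same system, and by the identities above \emph{all} their components vanish on $\Gamma$. Since $\dist(\Omega,\Gamma)<\tau$, you get $\di_t\bfE=\di_t\bfH=0$ on $\Omega\times(\dist(\Omega,\Gamma),\tau]$, so $\bfE\equiv\bfPhi$ and $\bfH\equiv0$ there. The second weak equation then yields $\int_\Omega\bfPhi\cdot\curl\bfPsi\dx=0$ for all $\bfPsi\in\bfH(\curl;\Omega)$, that is, $\bfPhi\in\bfH_0(\curl;\Omega)$ with $\curl\bfPhi=0$.

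Consequently $\overline{\ran\P_\tau}$ is the $\eps$-weighted orthogonal complement, inside $\bfL^2(\div_\eps0;\Omega)$, of these Dirichlet harmonic fields. This is the whole space exactly when $\di\Omega$ is connected. The lemma still holds, because $\bfg=\eps^{-1}\curl\bfPsi$ with $\bfPsi\in C_c^\infty(B)$ satisfies $\int_\Omega\eps\bfg\cdot\bfV\dx=\int_\Omega\bfPsi\cdot\curl\bfV\dx=0$ for every such field $\bfV$. Hence Tikhonov regularization still gives $\P_\tau\bff_\beta\to\bfg$.

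You should restrict to such compactly supported targets. A general $\bfg\in\bfL^2(\div_\eps0;\Omega)$ ``supported in $B$'' need not lie in the closure of the range when $B$ reaches $\di\Omega$. The paper's unproved density assertion needs the same correction.
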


We omit details of the proof of
Lemma~\ref{lmm:radiating_boundary_sources_Maxwell}, as the 
arguments are similar to that of 
Lemma~\ref{lmm:radiating_boundary_sources_acoustic}.
Rather, we briefly discuss the construction aspect of 
$\bff\in \Hcal^1_0([0,\tau];\HminhalfdivGamma)$ as in Lemma~\ref{lmm:radiating_boundary_sources_Maxwell}. 
Denoting $\bfL^2(\div_\eps0;\Omega)$ as defined
in~\eqref{eq:DefL2tildeMtau} with $\Mcal^{(\tau)}$ replaced 
by $\Omega$, we employ unique continuation of Maxwell's 
equations to see that the mapping
\begin{equation*}
  \P_{\tau}:\, \Hcal^1_0([0,\tau];\HminhalfdivGamma) 
  \to \bfL^2(\div_\eps0;\Omega) \,,
  \quad \bff \mapsto \bfE_\bff|_{t=\tau} \,
\end{equation*}
has dense range. 
Here $(\bfE_\bff, \bfH_\bff)$ denotes the solution to 
\eqref{eq:IBVP_Maxwell} for $\tau=T$. 
In order to construct $\bff$ such that $\bfE_f \neq 0$ 
in $B \subseteq \Omega$, we first fix 
$\bfPhi\in\bfL^2(\div_\eps0;\Omega)$ such that 
$\bf\Phi\not\equiv 0$ in $B$. 
Note that this can be always done by taking 
$\bfPhi=\eps^{-1}\curl\bfPsi$ where 
$\bfPsi \in C_c^\infty(B)$ with 
$\curl\bfPsi \not\equiv 0$ in $B$.  
With such choice of $\bfPhi$, we rely on Tikhonov 
regularization as discussed in 
Lemma~\ref{lmm:radiating_sources_acoustic} to conclude that 
\begin{equation*}
  \lim_{\beta\to 0+} \P_\tau[\bff_\beta] 
  \,=\, \bfPhi \,, \qquad 
  \text{where } \bff_\beta 
  \,:=\, \bigl( \P_\tau^*\P_\tau + \beta \I \bigr)^{-1}
  \P_\tau^*[\bfPhi] \,.
\end{equation*}
Therefore, $\bff_\beta$ provides an example of $\bff$ as
desired in Lemma~\ref{lmm:radiating_boundary_sources_Maxwell} 
when $\beta>0$ is sufficiently small. 

\section*{Acknowledgments.}
The research for this paper was supported by 
Deutsche Forschungsgemeinschaft (DFG, German Research Foundation)
-- Project-ID 258734477 -- SFB 1173.

\section*{Appendix. An abstract functional analytic result.}
\renewcommand{\theequation}{A.\arabic{equation}}
\renewcommand{\thetheorem}{A.\arabic{theorem}}
\setcounter{equation}{0}
\setcounter{theorem}{0}
\renewcommand{\thefigure}{A.\arabic{figure}}
\setcounter{figure}{0}

Our construction of localized wave functions crucially relies on a
functional analytic result which characterizes conditions for the
range inclusion of two operators by means of comparing the pointwise
norms of their adjoint operators. 
We recall this for the readers convenience. 

\begin{lemma}[Prop.~12.1.2. of~\cite{TucWei2009}]
  \label{lmm:RangeInclusion}
  For $i=1,2$, consider the bounded linear map $\Acal_i:\, Y_i\to X$
  where $X, Y_1$ and $Y_2$ denote three Hilbert spaces.
  Then we have $\ran\Acal_1 \subs \ran\Acal_2$ if and only if there
  exists a $C>0$ such that  
  \begin{equation*}
    \|\Acal^*_1[\xi]\|_{Y_1} \leq C\|\Acal^*_2[\xi]\|_{Y_2}
    \qquad  \text{for all } \xi\in X \,,
  \end{equation*}
  where $\Acal^*_i$ denotes the adjoint to $\Acal_i$ for $i=1,2$.
\end{lemma}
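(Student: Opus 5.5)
The plan is to deduce both directions of Lemma~\ref{lmm:RangeInclusion} from two standard facts. The first is the closed graph theorem. The second is the identity
\begin{equation*}
  \|\Acal^*[\xi]\|_{Y} \,=\, \sup_{\|y\|_Y\leq 1} |\langle \Acal[y],\xi\rangle_X| ,
\end{equation*}
valid for any bounded $\Acal:Y\to X$ between Hilbert spaces.

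\emph{Necessity.} Suppose $\ran\Acal_1\subseteq\ran\Acal_2$. First we pass to the injective part of $\Acal_2$. Let $P$ be the orthogonal projection onto $(\ker\Acal_2)^\perp$. Then $\widetilde\Acal_2:=\Acal_2|_{(\ker\Acal_2)^\perp}$ is injective, and its range is still $\ran\Acal_2$. For each $y\in Y_1$ let $G(y)$ be the unique element of $(\ker\Acal_2)^\perp$ with $\widetilde\Acal_2 G(y)=\Acal_1 y$. This defines a linear map $G:Y_1\to Y_2$.

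Next we show that $G$ has closed graph. Suppose $y_n\to y$ and $G(y_n)\to z$. Since $(\ker\Acal_2)^\perp$ is closed, $z$ lies in it. By continuity,
\begin{equation*}
  \Acal_2 z \,=\, \lim_{n\to\infty} \Acal_1 y_n \,=\, \Acal_1 y .
\end{equation*}
By uniqueness this gives $z=G(y)$. The closed graph theorem then makes $G$ bounded, and $\Acal_1=\Acal_2 G$.

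Taking adjoints yields $\Acal_1^*=G^*\Acal_2^*$. Hence
\begin{equation*}
  \|\Acal_1^*\xi\| \,\leq\, \|G\|\,\|\Acal_2^*\xi\| ,
\end{equation*}
so we may take $C=\|G\|$. (If $\Acal_1=0$, any $C>0$ works.) The one real step in this direction is the closedness of the graph; the rest is bookkeeping.

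\emph{Sufficiency.} Assume $\|\Acal_1^*\xi\|\leq C\|\Acal_2^*\xi\|$ for all $\xi\in X$. We define a map $L:\ran\Acal_2^*\to Y_1$ by
\begin{equation*}
  L(\Acal_2^*\xi) \,:=\, \Acal_1^*\xi .
\end{equation*}
This is well defined: if $\Acal_2^*\xi=0$, the inequality forces $\Acal_1^*\xi=0$. The map $L$ is linear with $\|L\|\leq C$. It therefore extends by continuity to $\overline{\ran\Acal_2^*}$, and we set it to zero on the orthogonal complement. This gives a bounded $L:Y_2\to Y_1$ with $L\Acal_2^*=\Acal_1^*$.

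Taking adjoints gives $\Acal_2 L^*=\Acal_1$. Consequently
\begin{equation*}
  \ran\Acal_1 \,=\, \Acal_2\bigl(\ran L^*\bigr) \,\subseteq\, \ran\Acal_2 .
\end{equation*}

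I expect no real obstacle in either direction. The only points needing care are the injectivity reduction and the closedness check in necessity, and the well-definedness of $L$ in sufficiency.
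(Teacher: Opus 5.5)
Your proof is correct. It is the classical Douglas factorization argument, which is the standard proof behind Prop.~12.1.2 of \cite{TucWei2009}; the paper gives no proof of its own and only cites that result. For necessity you use the closed graph theorem to get a bounded $G$ with $\Acal_1=\Acal_2G$; for sufficiency you extend $\Acal_2^*\xi\mapsto\Acal_1^*\xi$ to a bounded operator and take adjoints. One cosmetic point: the supremum formula for $\|\Acal^*\xi\|_Y$ that you announce at the start is never used.
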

We further note that Lemma~\ref{lmm:RangeInclusion} is valid even
for reflexive Banach spaces.
We refer the reader to consult~\cite[Lmm.~2.5]{Geb2008} for a proof.

Next we provide a result regarding the construction of localized wave
functions in the abstract framework
of~Lemma~\ref{lmm:RangeInclusion}. 
Such a construction has been already derived in~\cite[Lmm.~2.8]{Geb2008}
under the additional assumption that the adjoint operators in
Lemma~\ref{lmm:Construction}, i.e., $\mathcal{A}_i^*$, $i=1,2,$ are
injective. 

\begin{lemma}
  \label{lmm:Construction}
  Let $\Acal_i$, $i=1,2$, be the operators as defined in
  Lemma~\ref{lmm:RangeInclusion}.
  Suppose $\xi\in\ran\Acal_1$ but $\xi\notin\ran\Acal_2$.
  Then, we have $\lim_{\alpha\to 0+} \|\Acal^*_1[\xi_{\alpha}]\| = \infty$
  and  $\lim_{\alpha\to 0+} \Acal^*_2[\xi_\alpha] = 0$, where  
  \begin{equation*}
    \xi_\alpha
    \,:=\, \frac{\eta_\alpha}{\langle \xi,\eta_\alpha\rangle^{3/4}} \,,
    \quad \eta_\alpha
    \,:=\, \left(\Acal_2\Acal^*_2 + \alpha\I\right)^{-1}[\xi] \,, 
    \quad \alpha>0 \,.
  \end{equation*}
\end{lemma}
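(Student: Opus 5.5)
The plan is a Tikhonov-type argument in $Y_2$, where $X$ is the common target of $\Acal_1$, $\Acal_2$. I will use the spectral calculus of the self-adjoint, nonnegative operator $\Acal_2\Acal_2^*$ on $X$, written in terms of the spectral measure of $\xi$. Put $\eta_\alpha=(\Acal_2\Acal_2^*+\alpha\I)^{-1}\xi$. Then
\begin{equation*}
  \langle\xi,\eta_\alpha\rangle=\int\frac{d\nu_\xi(\lambda)}{\lambda+\alpha}>0,
  \qquad
  \|\Acal_2^*\eta_\alpha\|^2=\int\frac{\lambda\,d\nu_\xi(\lambda)}{(\lambda+\alpha)^2}\le\langle\xi,\eta_\alpha\rangle ,
\end{equation*}
where $\nu_\xi$ denotes the spectral measure of $\xi$. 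The first step is to show that $\langle\xi,\eta_\alpha\rangle\to\infty$ as $\alpha\to0+$. By monotone convergence this limit equals $\int\lambda^{-1}d\nu_\xi$. Finiteness of that integral is exactly the statement $\xi\in\ran(\Acal_2\Acal_2^*)^{1/2}=\ran\Acal_2$; the last identity is a standard consequence of Lemma~\ref{lmm:RangeInclusion}, or of the polar decomposition. Since $\xi\notin\ran\Acal_2$, the integral diverges.

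The second part is the vanishing on the $\Acal_2$ side. From the bound above,
\begin{equation*}
  \|\Acal_2^*\xi_\alpha\|=\frac{\|\Acal_2^*\eta_\alpha\|}{\langle\xi,\eta_\alpha\rangle^{3/4}}\le\langle\xi,\eta_\alpha\rangle^{1/2-3/4}\to0 .
\end{equation*}
This requires no injectivity: if $\nu_\xi$ charges $\{0\}$, the integral is infinite and the estimates still hold, possibly with larger quantities.

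The third part is the blow-up on the $\Acal_1$ side. Since $\xi\in\ran\Acal_1$, write $\xi=\Acal_1 y$. Then
\begin{equation*}
  \langle\xi,\xi_\alpha\rangle=\langle y,\Acal_1^*\xi_\alpha\rangle\le\|y\|\,\|\Acal_1^*\xi_\alpha\| ,
\end{equation*}
while the left side equals $\langle\xi,\eta_\alpha\rangle^{1/4}\to\infty$. Hence $\|\Acal_1^*\xi_\alpha\|\ge\langle\xi,\eta_\alpha\rangle^{1/4}/\|y\|\to\infty$.

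The main obstacle is the first step, the divergence of $\langle\xi,\eta_\alpha\rangle$ without injectivity of $\Acal_2^*$. I would handle it carefully through the identity $\ran\Acal_2=\ran(\Acal_2\Acal_2^*)^{1/2}$ together with the spectral characterization $\xi\in\ran S^{1/2}\iff\int\lambda^{-1}d\nu_\xi<\infty$ for $S=\Acal_2\Acal_2^*\ge0$, where any mass at $\lambda=0$ counts as divergence. If that route felt delicate, I would instead argue by contradiction. Suppose $\langle\xi,\eta_\alpha\rangle$ stays bounded along a sequence. Then $\Acal_2^*\eta_\alpha$ is bounded, so a subsequence converges weakly to some $z$. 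Moreover $\xi-\Acal_2\Acal_2^*\eta_\alpha=\alpha\eta_\alpha$, and $\|\alpha\eta_\alpha\|^2\le\alpha\langle\xi,\eta_\alpha\rangle\to0$. Passing to the limit gives $\xi=\Acal_2 z$, a contradiction. Monotonicity in $\alpha$ then yields the full limit.
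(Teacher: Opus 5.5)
Your proof is correct. Its second and third steps coincide with the paper's: the bound $\|\Acal_2^*\xi_\alpha\|\le\langle\xi,\eta_\alpha\rangle^{-1/4}$, and the Cauchy--Schwarz lower bound $\|\Acal_1^*\xi_\alpha\|\ge\langle\xi,\eta_\alpha\rangle^{1/4}/\|y\|$.

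The difference lies in the key step $\langle\xi,\eta_\alpha\rangle\to\infty$. Your primary route combines three ingredients: the spectral measure of $\xi$ with respect to $\Acal_2\Acal_2^*$, monotone convergence, and the identity $\ran\Acal_2=\ran(\Acal_2\Acal_2^*)^{1/2}$. That identity does follow from Lemma~\ref{lmm:RangeInclusion}, since $\|\Acal_2^*\zeta\|=\|(\Acal_2\Acal_2^*)^{1/2}\zeta\|$ for all $\zeta\in X$. This route gives more information than needed. The limit is exactly $\int\lambda^{-1}\,d\nu_\xi(\lambda)$, and it is approached monotonically. Its finiteness is equivalent to $\xi\in\ran\Acal_2$. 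The non-injective case, where $\nu_\xi$ has mass at $0$, is handled transparently.

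The paper instead uses precisely your fallback: a contradiction argument by weak compactness. It needs nothing beyond Banach--Alaoglu and the weak continuity of $\Acal_2$, so it stays self-contained without spectral theory or the range identity. Your version of that fallback is slightly leaner than the paper's. The paper extracts weak limits of both $\sqrt{\alpha_k}\,\eta_{\alpha_k}$ and $\Acal_2^*\eta_{\alpha_k}$. You observe that $\|\alpha\eta_\alpha\|^2\le\alpha\langle\xi,\eta_\alpha\rangle\to0$, so $\Acal_2\Acal_2^*\eta_{\alpha_k}\to\xi$ in norm and only one weak limit is required.

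Two small remarks. The final appeal to monotonicity in $\alpha$ is superfluous, though harmless: the contradiction already applies to every sequence $\alpha_k\to0+$ along which $\langle\xi,\eta_{\alpha_k}\rangle$ stays bounded. You should also state explicitly that $\xi\neq0$, since $0\in\ran\Acal_2$. This is what makes $\langle\xi,\eta_\alpha\rangle>0$ and guarantees $y\neq0$ in your last step.
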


\begin{proof}
  Note that, the definition of $\xi_\alpha$ makes sense since
  $\langle \xi,\eta_\alpha\rangle >0$, which follows from the relation 
  \begin{equation}
    \label{eq:ProofConstruction1}
    \langle \xi , \eta_\alpha\rangle
    \,=\, \|\Acal^*_2[\eta_\alpha]\|^2 + \alpha \|\eta_\alpha\|^2 \,,
    \qquad \alpha>0
  \end{equation}
  and the fact that $\xi\neq 0$.
  This implies $\eta_\alpha \neq 0$ making the left hand side of
  \eqref{eq:ProofConstruction1} positive.
  In order to prove Lemma~\ref{lmm:Construction}, we start with the
  observation that 
  \begin{equation}
    \label{eq:ProofConstruction2}
    \lim_{\alpha\to 0+} \langle \xi , \eta_\alpha \rangle
    \,=\, \infty \,.
  \end{equation}
  To see this, we assume the contrary, i.e.,
  $\{\langle \xi , \eta_{\alpha_k} \rangle\}_{k\in\N}$ is bounded for some
  sequence $\alpha_k \to 0+$.
  Making use of \eqref{eq:ProofConstruction1}, we therefore
  see that the sequences $\{\Acal^*_2[\eta_{\alpha_k}]\}_{k\in\N}$ and
  $\{ \sqrt{\alpha_k} \, \eta_{\alpha_k} \}_{k\in\N}$ are bounded in $Y_2$
  and $X$ respectively. 
  An application of the Banach-Alaoglu theorem implies that both these
  sequences admit weakly convergent subsequences which we may
  consider to be same up to a subsequence.
  With an abuse of notation, we still denote this subsequence by
  $\{\alpha_k\}_{k\in\N}$.
  Summarizing, we have
  \begin{equation}
    \label{eq:ProofConstruction3}
    \sqrt{\alpha_k} \, \eta_{\alpha_k} \rightharpoonup \xi_0 \,,
    \qquad \Acal^*_2[\eta_{\alpha_k}] \rightharpoonup \psi_2 \,,
  \end{equation}
  for some $\xi_0\in X$ and $\psi_2 \in Y_2$.
  For any $\widetilde \xi\in X$, we use \eqref{eq:ProofConstruction3}
  to compute 
  \begin{equation*}
    \begin{split}
      \langle \widetilde \xi , \Acal_2[\psi_2]\rangle
      &\,=\, \lim_{k\to\infty} \langle \widetilde \xi ,
      \Acal_2\Acal^*_2[\eta_{\alpha_k}]\rangle
      \,=\, \langle \widetilde \xi , \xi \rangle
      - \lim_{k\to\infty} \sqrt{\alpha_k} \,
      \langle \widetilde \xi , \sqrt{\alpha_k} \eta_{\alpha_k} \rangle \\
      &\,=\, \langle \widetilde \xi , \xi \rangle
      - \langle \widetilde \xi, \xi_0\rangle \lim_{k\to\infty} \sqrt{\alpha_k} 
      \,=\, \langle \widetilde \xi, \xi \rangle \,,
    \end{split}
  \end{equation*}
  implying $\xi=\Acal_2\psi_2$.
  However, this contradicts our assumption that $\xi\notin\ran\Acal_2$
  hence proving~\eqref{eq:ProofConstruction2}. 
  
  Now we calculate 
  \begin{equation*}
    \|\Acal^*_2 [\xi_\alpha]\|
    \,=\, \frac{1}{\langle \xi,\eta_\alpha\rangle^{3/4}}
    \|\Acal^*_2 [\eta_\alpha]\|
    \,\leq\, \frac{1}{\langle \xi , \eta_\alpha\rangle^{3/4}} \,
    \langle \xi , \eta_\alpha \rangle^{1/2}
    \,=\, \langle \xi,\eta_\alpha\rangle^{-1/4} \,,
  \end{equation*}
  which converges to $0$ as $\alpha\to 0+$ due
  to~\eqref{eq:ProofConstruction2}. 
  Here we also use an upper bound for $\|\Acal^*_2[\eta_\alpha]\|$
  from~\eqref{eq:ProofConstruction1}. 
  Now to show
  $\lim_{\alpha\to 0+} \|\Acal^*_1 [\xi_{\alpha}]\| = \infty$, we recall
  that $\xi\in\ran\Acal_1$, i.e., $\xi=\Acal_1[\psi_1]$ for some
  $\psi_1\in Y_1$.
  Employing the Cauchy-Schwarz inequality, we next observe that 
  \begin{equation*}
    \|\Acal^*_1 [\xi_{\alpha}]\|
    \,\ge\, \frac{\langle \Acal^*_1 [\xi_{\alpha}], \psi_1\rangle}{\|\psi_1\|}
    \,=\, \frac{\langle \xi, \xi_{\alpha}\rangle}{\|\psi_1\|}
    \,=\, \frac{\langle \xi,\eta_\alpha\rangle^{1/4}}{\|\psi_1\|} \,,
  \end{equation*}
  which readily gives
  $\lim_{\alpha\to 0+} \|\Acal^*_1 [\xi_{\alpha}]\| = \infty$ in
  consideration of \eqref{eq:ProofConstruction2}. 
\end{proof}

{\small
  \bibliographystyle{abbrvurl}
  \bibliography{localizedwaves}
}

\end{document}

%% file: macros.tex
\newcommand{\field}[1]{{\mathbb{#1}}}

\newcommand{\N}{\field{N}}
\newcommand{\R}{\field{R}}

\newcommand{\E}{\mathbb{E}}

\renewcommand{\H}{\mathbb{H}}
\newcommand{\I}{\mathbb{I}}
\newcommand{\J}{\mathbb{J}}
\renewcommand{\L}{\mathbb{L}}

\renewcommand{\P}{\mathbb{P}}
\newcommand{\T}{\mathbb{T}}

\newcommand{\one}{\mathbbm{1}}

\newcommand{\Acal}{\mathcal{A}}

\newcommand{\Hcal}{\mathcal{H}}

\newcommand{\Mcal}{\mathcal{M}}

\newcommand{\Rcal}{\mathcal{R}}
\newcommand{\Scal}{\mathcal{S}}
\newcommand{\Tcal}{\mathcal{T}}

\newcommand{\bs}{\boldsymbol}

\newcommand{\bff}{{\bs f}}
\newcommand{\bfg}{{\bs g}}
\newcommand{\bfh}{{\bs h}}
\newcommand{\bfj}{{\bs j}}
\newcommand{\bfk}{{\bs k}}

\newcommand{\bfu}{{\bs u}}

\newcommand{\bfE}{{\bs E}}

\newcommand{\bfH}{{\bs H}}

\newcommand{\bfJ}{{\bs J}}
\newcommand{\bfK}{{\bs K}}
\newcommand{\bfL}{{\bs L}}

\newcommand{\bfV}{{\bs V}}

\newcommand{\bfZ}{{\bs Z}}

\newcommand{\bfeta}{{\bs\eta}}

\newcommand{\bfnu}{{\bs\nu}}

\newcommand{\bfxi}{{\bs\xi}}

\newcommand{\bfPhi}{{\bs\Phi}}
\newcommand{\bfPsi}{{\bs\Psi}}

\newcommand{\Btilde}{{\widetilde B}}

\newcommand{\ol}[1]{\overline{#1}}

\newcommand{\subs}{\subseteq} 
\newcommand{\di}{\partial}

\newcommand{\ds}{\, \dif s} 
\newcommand{\dt}{\, \dif t}
 
\newcommand{\dx}{\, \dif x}

\newcommand{\eps}{\varepsilon}

\newcommand{\ph}{\,\cdot\,}

\newcommand{\Rd}{{\R^3}}

\definecolor{ForestGreen}{RGB}{34,139,34}

\newcommand{\HcurlOmega}{H(\curl;\Omega)}

\newcommand{\HminhalfdivGamma}{{\widetilde{H}^{-1/2}(\sdiv;\Gamma)}}

\newcommand{\HminhalfdivOmega}{H^{-1/2}(\sdiv;\di\Omega)}
\newcommand{\HminhalfcurlOmega}{H^{-1/2}(\sscurl;\di\Omega)}

\DeclareMathAlphabet{\mathbi}{\encodingdefault}{\rmdefault}{\bfdefault}{\itdefault}

\DeclareMathOperator{\dif}{d\!}

\DeclareMathOperator{\dist}{d}
\DeclareMathOperator{\diam}{diam}

\DeclareMathOperator{\supp}{supp}

\DeclareMathOperator{\curl}{{\mathbf{curl}}}
\DeclareMathOperator{\diver}{\mathrm{div}}
\renewcommand{\div}{\diver}

\DeclareMathOperator{\sscurl}{\mathrm{Curl}}
\DeclareMathOperator{\sdiv}{\mathrm{Div}}

\DeclareMathOperator{\ran}{Ran}


%% file: localizedwaves.bib
@article {AlbGri2023,
    AUTHOR = {Albicker, Annalena and Griesmaier, Roland},
     TITLE = {Monotonicity in inverse scattering for {M}axwell's equations},
   JOURNAL = {Inverse Probl. Imaging},
  FJOURNAL = {Inverse Problems and Imaging},
    VOLUME = {17},
      YEAR = {2023},
    NUMBER = {1},
     PAGES = {68--105},
       DOI = {10.3934/ipi.2022032},
       URL = {https://doi.org/10.3934/ipi.2022032},
}

@misc{Ant2025,
      title={Well-Posedness and approximation of weak solutions to
                  time dependent {M}axwell's equations with
                  {$L^2$}-data},  
      author={Harbir Antil},
      year={2025},
      eprint={2510.20752},
      archivePrefix={arXiv},
      primaryClass={math.NA},
      url={https://arxiv.org/abs/2510.20752}, 
}

@article {BKLS2008,
    AUTHOR = {Bingham, Kenrick and Kurylev, Yaroslav and Lassas, Matti and
              Siltanen, Samuli},
     TITLE = {Iterative time-reversal control for inverse problems},
   JOURNAL = {Inverse Probl. Imaging},
  FJOURNAL = {Inverse Problems and Imaging},
    VOLUME = {2},
      YEAR = {2008},
    NUMBER = {1},
     PAGES = {63--81},
       DOI = {10.3934/ipi.2008.2.63},
       URL = {https://doi.org/10.3934/ipi.2008.2.63},
}

@article {DahKirLas2009,
    AUTHOR = {Dahl, Matias F. and Kirpichnikova, Anna and Lassas, Matti},
     TITLE = {Focusing waves in unknown media by modified time reversal
              iteration},
   JOURNAL = {SIAM J. Control Optim.},
  FJOURNAL = {SIAM Journal on Control and Optimization},
    VOLUME = {48},
      YEAR = {2009},
    NUMBER = {2},
     PAGES = {839--858},
       DOI = {10.1137/070705192},
       URL = {https://doi.org/10.1137/070705192},
}

@article {EbePoh2024,
    AUTHOR = {Eberle-Blick, Sarah and Pohjola, Valter},
     TITLE = {The monotonicity method for inclusion detection and the time
              harmonic elastic wave equation},
   JOURNAL = {Inverse Problems},
  FJOURNAL = {Inverse Problems. An International Journal on the Theory and
              Practice of Inverse Problems, Inverse Methods and Computerized
              Inversion of Data},
    VOLUME = {40},
      YEAR = {2024},
    NUMBER = {4},
     PAGES = {045018, 43pp},
      ISSN = {0266-5611,1361-6420},
   MRCLASS = {65M32 (65M60)},
  MRNUMBER = {4714197},
MRREVIEWER = {Jenn-Nan\ Wang},
       DOI = {10.1088/1361-6420/ad2901},
       URL = {https://doi.org/10.1088/1361-6420/ad2901},
}

@article {Ell2003,
    AUTHOR = {Eller, Matthias M.},
     TITLE = {Unique continuation for solutions to {M}axwell's system with
              non-analytic anisotropic coefficients},
   JOURNAL = {J. Math. Anal. Appl.},
  FJOURNAL = {Journal of Mathematical Analysis and Applications},
    VOLUME = {284},
      YEAR = {2003},
    NUMBER = {2},
     PAGES = {698--710},
       DOI = {10.1016/S0022-247X(03)00391-3},
       URL = {https://doi.org/10.1016/S0022-247X(03)00391-3},
}

@incollection {EINT2002,
    AUTHOR = {Eller, M. and Isakov, V. and Nakamura, G. and Tataru, D.},
     TITLE = {Uniqueness and stability in the {C}auchy problem for {M}axwell
              and elasticity systems},
 BOOKTITLE = {Nonlinear partial differential equations and their
              applications. {C}oll\`ege de {F}rance {S}eminar, {V}ol. {XIV}
              ({P}aris, 1997/1998)},
     PAGES = {329--349},
 PUBLISHER = {North-Holland, Amsterdam},
      YEAR = {2002},
       DOI = {10.1016/S0168-2024(02)80016-9},
       URL = {https://doi.org/10.1016/S0168-2024(02)80016-9},
}

@article {Geb2008,
    AUTHOR = {Gebauer, Bastian},
     TITLE = {Localized potentials in electrical impedance tomography},
   JOURNAL = {Inverse Probl. Imaging},
  FJOURNAL = {Inverse Problems and Imaging},
    VOLUME = {2},
      YEAR = {2008},
    NUMBER = {2},
     PAGES = {251--269},
       DOI = {10.3934/ipi.2008.2.251},
       URL = {https://doi.org/10.3934/ipi.2008.2.251},
}

@article {GriHar2018,
    AUTHOR = {Griesmaier, Roland and Harrach, Bastian},
     TITLE = {Monotonicity in inverse medium scattering on unbounded
              domains},
   JOURNAL = {SIAM J. Appl. Math.},
  FJOURNAL = {SIAM Journal on Applied Mathematics},
    VOLUME = {78},
      YEAR = {2018},
    NUMBER = {5},
     PAGES = {2533--2557},
       DOI = {10.1137/18M1171679},
       URL = {https://doi.org/10.1137/18M1171679},
}

@book {Han2017,
    AUTHOR = {Hanke, Martin},
     TITLE = {A taste of inverse problems},
 PUBLISHER = {Society for Industrial and Applied Mathematics (SIAM),
              Philadelphia, PA},
      YEAR = {2017},
     PAGES = {viii+162},
       DOI = {10.1137/1.9781611974942.ch1},
       URL = {https://doi.org/10.1137/1.9781611974942.ch1},
}

@article {Har2009,
    AUTHOR = {Harrach, Bastian},
     TITLE = {On uniqueness in diffuse optical tomography},
   JOURNAL = {Inverse Problems},
  FJOURNAL = {Inverse Problems. An International Journal on the Theory and
              Practice of Inverse Problems, Inverse Methods and Computerized
              Inversion of Data},
    VOLUME = {25},
      YEAR = {2009},
    NUMBER = {5},
     PAGES = {055010, 14},
       DOI = {10.1088/0266-5611/25/5/055010},
       URL = {https://doi.org/10.1088/0266-5611/25/5/055010},
}

@article {HarLinLiu2018,
    AUTHOR = {Harrach, Bastian and Lin, Yi-Hsuan and Liu, Hongyu},
     TITLE = {On localizing and concentrating electromagnetic fields},
   JOURNAL = {SIAM J. Appl. Math.},
  FJOURNAL = {SIAM Journal on Applied Mathematics},
    VOLUME = {78},
      YEAR = {2018},
    NUMBER = {5},
     PAGES = {2558--2574},
       DOI = {10.1137/18M1173605},
       URL = {https://doi.org/10.1137/18M1173605},
}

@article {HarPohSal2019a,
    AUTHOR = {Harrach, Bastian and Pohjola, Valter and Salo, Mikko},
     TITLE = {Monotonicity and local uniqueness for the {H}elmholtz
              equation},
   JOURNAL = {Anal. PDE},
  FJOURNAL = {Analysis \& PDE},
    VOLUME = {12},
      YEAR = {2019},
    NUMBER = {7},
     PAGES = {1741--1771},
       DOI = {10.2140/apde.2019.12.1741},
       URL = {https://doi.org/10.2140/apde.2019.12.1741},
}

@article {HarPohSal2019b,
    AUTHOR = {Harrach, Bastian and Pohjola, Valter and Salo, Mikko},
     TITLE = {Dimension bounds in monotonicity methods for the {H}elmholtz
              equation},
   JOURNAL = {SIAM J. Math. Anal.},
  FJOURNAL = {SIAM Journal on Mathematical Analysis},
    VOLUME = {51},
      YEAR = {2019},
    NUMBER = {4},
     PAGES = {2995--3019},
       DOI = {10.1137/19M1240708},
       URL = {https://doi.org/10.1137/19M1240708},
}

@article {HarUlr2013,
    AUTHOR = {Harrach, Bastian and Ullrich, Marcel},
     TITLE = {Monotonicity-based shape reconstruction in electrical
              impedance tomography},
   JOURNAL = {SIAM J. Math. Anal.},
  FJOURNAL = {SIAM Journal on Mathematical Analysis},
    VOLUME = {45},
      YEAR = {2013},
    NUMBER = {6},
     PAGES = {3382--3403},
       DOI = {10.1137/120886984},
       URL = {https://doi.org/10.1137/120886984},
}

@article {HarUlr2017,
    AUTHOR = {Harrach, Bastian and Ullrich, Marcel},
     TITLE = {Local uniqueness for an inverse boundary value problem with
              partial data},
   JOURNAL = {Proc. Amer. Math. Soc.},
  FJOURNAL = {Proceedings of the American Mathematical Society},
    VOLUME = {145},
      YEAR = {2017},
    NUMBER = {3},
     PAGES = {1087--1095},
       DOI = {10.1090/proc/12991},
       URL = {https://doi.org/10.1090/proc/12991},
}

@misc{HarXia2026,
      title={The monotonicity method for the inverse elastic scattering on unbounded domains}, 
      author={Bastian Harrach and Jianli Xiang},
      year={2026},
      eprint={2602.04453},
      archivePrefix={arXiv},
      primaryClass={math.AP},
      url={https://arxiv.org/abs/2602.04453}, 
}

@article {Hor92,
    AUTHOR = {H\"ormander, L.},
     TITLE = {A uniqueness theorem for second order hyperbolic differential
              equations},
   JOURNAL = {Comm. Partial Differential Equations},
  FJOURNAL = {Communications in Partial Differential Equations},
    VOLUME = {17},
      YEAR = {1992},
    NUMBER = {5-6},
     PAGES = {699--714},
       DOI = {10.1080/03605309208820860},
       URL = {https://doi.org/10.1080/03605309208820860},
}

@book {KatKurLas2001,
    AUTHOR = {Katchalov, Alexander and Kurylev, Yaroslav and Lassas, Matti},
     TITLE = {Inverse boundary spectral problems},
    SERIES = {Chapman \& Hall/CRC Monographs and Surveys in Pure and Applied
              Mathematics},
    VOLUME = {123},
 PUBLISHER = {Chapman \& Hall/CRC, Boca Raton, FL},
      YEAR = {2001},
     PAGES = {xx+290},
       DOI = {10.1201/9781420036220},
       URL = {https://doi.org/10.1201/9781420036220},
}

@article {KKLO2021,
    AUTHOR = {Kirpichnikova, Anna and Korpela, Jussi and Lassas, Matti J.
              and Oksanen, Lauri},
     TITLE = {Construction of artificial point sources for a linear wave
              equation in unknown medium},
   JOURNAL = {SIAM J. Control Optim.},
  FJOURNAL = {SIAM Journal on Control and Optimization},
    VOLUME = {59},
      YEAR = {2021},
    NUMBER = {5},
     PAGES = {3737--3761},
       DOI = {10.1137/20M136904X},
       URL = {https://doi.org/10.1137/20M136904X},
}

@book {KirHet2015,
    AUTHOR = {Kirsch, Andreas and Hettlich, Frank},
     TITLE = {The mathematical theory of time-harmonic {M}axwell's
              equations},
 PUBLISHER = {Springer, Cham},
      YEAR = {2015},
     PAGES = {xiv+337},
       DOI = {10.1007/978-3-319-11086-8},
       URL = {https://doi.org/10.1007/978-3-319-11086-8},
}

@article {KirRie2016,
    AUTHOR = {Kirsch, Andreas and Rieder, Andreas},
     TITLE = {Inverse problems for abstract evolution equations with
              applications in electrodynamics and elasticity},
   JOURNAL = {Inverse Problems},
  FJOURNAL = {Inverse Problems. An International Journal on the Theory and
              Practice of Inverse Problems, Inverse Methods and Computerized
              Inversion of Data},
    VOLUME = {32},
      YEAR = {2016},
    NUMBER = {8},
     PAGES = {085001, 24pp},
       DOI = {10.1088/0266-5611/32/8/085001},
       URL = {https://doi.org/10.1088/0266-5611/32/8/085001},
}

@article {KohVog1984,
    AUTHOR = {Kohn, Robert and Vogelius, Michael},
     TITLE = {Determining conductivity by boundary measurements},
   JOURNAL = {Comm. Pure Appl. Math.},
  FJOURNAL = {Communications on Pure and Applied Mathematics},
    VOLUME = {37},
      YEAR = {1984},
    NUMBER = {3},
     PAGES = {289--298},
       DOI = {10.1002/cpa.3160370302},
       URL = {https://doi.org/10.1002/cpa.3160370302},
}

@article {KohVog1985,
    AUTHOR = {Kohn, R. V. and Vogelius, M.},
     TITLE = {Determining conductivity by boundary measurements. {II}.
              {I}nterior results},
   JOURNAL = {Comm. Pure Appl. Math.},
  FJOURNAL = {Communications on Pure and Applied Mathematics},
    VOLUME = {38},
      YEAR = {1985},
    NUMBER = {5},
     PAGES = {643--667},
       DOI = {10.1002/cpa.3160380513},
       URL = {https://doi.org/10.1002/cpa.3160380513},
}

@article {LasLioTri1986,
    AUTHOR = {Lasiecka, I. and Lions, J.-L. and Triggiani, R.},
     TITLE = {Nonhomogeneous boundary value problems for second order
              hyperbolic operators},
   JOURNAL = {J. Math. Pures Appl. (9)},
  FJOURNAL = {Journal de Math\'{e}matiques Pures et Appliqu\'{e}es.
              Neuvi\`eme S\'{e}rie},
    VOLUME = {65},
      YEAR = {1986},
    NUMBER = {2},
     PAGES = {149--192},
}

@misc{LauLea2023,
      title={Lectures on unique continuation for waves}, 
      author={Laurent , C. and L\'eautaud, M.},
      year={2023},
      eprint={2307.02155},
      archivePrefix={arXiv},
      primaryClass={math.AP},
      url={https://arxiv.org/abs/2307.02155}, 
}

@article {Poh2022,
    AUTHOR = {Pohjola, Valter},
     TITLE = {On quantitative {R}unge approximation for the time harmonic
              {M}axwell equations},
   JOURNAL = {Trans. Amer. Math. Soc.},
  FJOURNAL = {Transactions of the American Mathematical Society},
    VOLUME = {375},
      YEAR = {2022},
    NUMBER = {8},
     PAGES = {5727--5751},
       DOI = {10.1090/tran/8662},
       URL = {https://doi.org/10.1090/tran/8662},
}

@article {Rob91,
    AUTHOR = {Robbiano, L.},
     TITLE = {Th\'eor\`eme d'unicit\'e{} adapt\'e{} au contr\^ole des
              solutions des probl\`emes hyperboliques},
   JOURNAL = {Comm. Partial Differential Equations},
  FJOURNAL = {Communications in Partial Differential Equations},
    VOLUME = {16},
      YEAR = {1991},
    NUMBER = {4-5},
     PAGES = {789--800},
       DOI = {10.1080/03605309108820778},
       URL = {https://doi.org/10.1080/03605309108820778},
}

@article {RueSal2019,
    AUTHOR = {R\"{u}land, Angkana and Salo, Mikko},
     TITLE = {Quantitative {R}unge approximation and inverse problems},
   JOURNAL = {Int. Math. Res. Not. IMRN},
  FJOURNAL = {International Mathematics Research Notices. IMRN},
    VOLUME = {2019},
      YEAR = {2019},
    NUMBER = {20},
     PAGES = {6216--6234},
       DOI = {10.1093/imrn/rnx301},
       URL = {https://doi.org/10.1093/imrn/rnx301},
}

@article {Tam06,
    AUTHOR = {Tamburrino, A.},
     TITLE = {Monotonicity based imaging methods for elliptic and parabolic
              inverse problems},
   JOURNAL = {J. Inverse Ill-Posed Probl.},
  FJOURNAL = {Journal of Inverse and Ill-Posed Problems},
    VOLUME = {14},
      YEAR = {2006},
    NUMBER = {6},
     PAGES = {633--642},
       DOI = {10.1163/156939406778474578},
       URL = {https://doi.org/10.1163/156939406778474578},
}

@article {TamRub02,
    AUTHOR = {Tamburrino, A. and Rubinacci, G.},
     TITLE = {A new non-iterative inversion method for electrical resistance
              tomography},
   JOURNAL = {Inverse Problems},
  FJOURNAL = {Inverse Problems. An International Journal on the Theory and
              Practice of Inverse Problems, Inverse Methods and Computerized
              Inversion of Data},
    VOLUME = {18},
      YEAR = {2002},
    NUMBER = {6},
     PAGES = {1809--1829},
       DOI = {10.1088/0266-5611/18/6/323},
       URL = {https://doi.org/10.1088/0266-5611/18/6/323},
}

@article {Tat95,
    AUTHOR = {Tataru, D.},
     TITLE = {Unique continuation for solutions to {PDE}'s; between
              {H}\"ormander's theorem and {H}olmgren's theorem},
   JOURNAL = {Comm. Partial Differential Equations},
  FJOURNAL = {Communications in Partial Differential Equations},
    VOLUME = {20},
      YEAR = {1995},
    NUMBER = {5-6},
     PAGES = {855--884},
       DOI = {10.1080/03605309508821117},
       URL = {https://doi.org/10.1080/03605309508821117},
}

@article {Tat99,
    AUTHOR = {Tataru, Daniel},
     TITLE = {Unique continuation for operators with partially analytic
              coefficients},
   JOURNAL = {J. Math. Pures Appl. (9)},
  FJOURNAL = {Journal de Math\'{e}matiques Pures et Appliqu\'{e}es.
              Neuvi\`eme S\'{e}rie},
    VOLUME = {78},
      YEAR = {1999},
    NUMBER = {5},
     PAGES = {505--521},
       DOI = {10.1016/S0021-7824(99)00016-1},
       URL = {https://doi.org/10.1016/S0021-7824(99)00016-1},
}

@book {TucWei2009,
    AUTHOR = {Tucsnak, Marius and Weiss, George},
     TITLE = {Observation and control for operator semigroups},
 PUBLISHER = {Birkh\"{a}user Verlag, Basel},
      YEAR = {2009},
     PAGES = {xii+483},
       DOI = {10.1007/978-3-7643-8994-9},
       URL = {https://doi.org/10.1007/978-3-7643-8994-9},
}
